\numberwithin{equation}{section}
\newtheorem{propn}{Proposition}[section]
\newtheorem{thm}[propn]{Theorem}
\newtheorem{lemma}[propn]{Lemma}
\newtheorem{cor}[propn]{Corollary}
\newtheorem*{thm*}{Theorem}
\theoremstyle{definition}
\newtheorem{defn}[propn]{Definition}
\newtheorem{rem}{Remark}[section]
\newcommand{\Nat}{\mathbb{N}}
 \newcommand{\D}{\mathbb{D}}
\newcommand{\clb}{\mathcal{B}}
\newcommand{\cld}{\mathcal{D}}
\newcommand{\cle}{\mathcal{E}}
\newcommand{\clf}{\mathcal{F}}
\newcommand{\clh}{\mathcal{H}}
\newcommand{\clk}{\mathcal{K}}
\newcommand{\clm}{\mathcal{M}}
\newcommand{\cln}{\mathcal{N}}
\newcommand{\clq}{\mathcal{Q}}
\newcommand{\clw}{\mathcal{W}}
\newcommand{\C}{\mathbb{C}}
\newcommand{\raro}{\rightarrow}
\newcommand{\Hil}{\mathcal{H}}
\begin{document}

\title[Pairs of Commuting Pure Contractions and Isometric dilation]{Pairs of Commuting Pure Contractions and Isometric Dilation}


\author[Sarkar]{Srijan Sarkar}
\address{Department of Mathematics, Indian Institute of Science, Bangalore, 560012, India}
\email{srijans@iisc.ac.in,
srijansarkar@gmail.com}

\subjclass[2010]{47A13, 47A20, 47A56, 47A68, 47B38, 46E20, 30H10.}

\keywords{pure contractions, pair of commuting contractions, pure isometries, pair of commuting isometries, Hardy space, And\^o dilation, von Neumann inequality.}

\begin{abstract}
A particular case of the fundamental Sz.-Nagy--Foias functional model for a contraction states that a pure contraction always dilates to a pure isometry. We are interested in the similar question for pairs, more precisely: \textit{does a pair of commuting pure contractions always dilate to a pair of commuting pure isometries?} The purpose of this article is to identify pairs of commuting pure contractions for which the above question has an affirmative answer. Our method is based on an explicit structure of isometric dilation for pure pairs of commuting contractions obtained in a recent work by Das, Sarkar and the author.

\end{abstract}
\maketitle

\section{Introduction}
A series of important developments in operator theory evolved from the foundational result of Sz.-Nagy on the unitary dilation of contractions (\cite{Nagy}); the most remarkable being the Sz.-Nagy and Foias functional model of a contraction $T$  (that is, $\|T\| \leq 1$) on a Hilbert space $\clh$ (see \cite{NF}). In this article, we are interested in the model of a particular class of contractions. To be more precise, let us recall the following definition.
\begin{defn}
A contraction $T$ is said to be pure if $T^{*m} \raro 0$ in the strong operator topology (that is, for all $h \in \clh, \|T^{*m}h\| \raro 0)$ as $m \raro \infty$.
\end{defn}
The Sz.-Nagy-Foias model for pure contractions (see \cite{NF}) states that
\begin{thm}\label{NF}(\textsf{Sz.-Nagy and Foias})
Let $T$ be a pure contraction on $\clh$. Then
$T$ and $P_{\clq} M_z|_{\clq}$ are unitarily equivalent, where
$\clq$ is a closed $M_z^*$-invariant subspace of a vector-valued
Hardy space $H^2({\cld})$. More precisely, there exists an isometry $\Pi:\clh \raro H^2(\cld)$ such that
\[
\Pi T^* = M_z^* \Pi.
\]
\end{thm}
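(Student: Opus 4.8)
The plan is to construct the isometry $\Pi$ explicitly through the defect operator of $T^*$ and to read off both the model and the invariant subspace from its range. Set $\Dts = (I - TT^*)^{1/2}$ and take $\cld = \dts = \overline{\Ran}\,\Dts$. Motivated by the requirement $\Pi T^* = M_z^* \Pi$, which (since $M_z^*$ is the backward shift) forces the $z^m$-coefficient of $\Pi h$ to equal the $z^0$-coefficient of $\Pi T^{*m} h$, I would \emph{define}
\[
(\Pi h)(z) = \sum_{m=0}^{\infty} \big(\Dts T^{*m} h\big)\, z^m \qquad (h \in \clh).
\]

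First I would establish that $\Pi$ is a well-defined isometry, which is the crux of the argument and the only place purity is used. Writing $\Dts^2 = I - TT^*$ and expanding each term, one obtains the telescoping identity
\[
\sum_{m=0}^{N} \|\Dts T^{*m} h\|^2 = \sum_{m=0}^{N} \big( \|T^{*m} h\|^2 - \|T^{*(m+1)} h\|^2 \big) = \|h\|^2 - \|T^{*(N+1)} h\|^2.
\]
Since $T$ is pure, $\|T^{*(N+1)} h\| \raro 0$ as $N \raro \infty$, so the partial sums are bounded by $\|h\|^2$ (guaranteeing convergence in $H^2(\cld)$) and converge to $\|h\|^2$. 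Hence $\|\Pi h\| = \|h\|$, so $\Pi$ is a well-defined isometry; in particular $\clq := \Ran\,\Pi$ is a closed subspace.

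Next I would verify the intertwining relation by comparing coefficients: applying the backward shift $M_z^*$ to $\Pi h$ deletes the constant term and shifts down, so the $z^m$-coefficient of $M_z^* \Pi h$ is $\Dts T^{*(m+1)} h$, which is exactly the $z^m$-coefficient of $\Pi T^* h$. This gives $\Pi T^* = M_z^* \Pi$, and it shows $\clq$ is $M_z^*$-invariant, since $M_z^* \Pi h = \Pi T^* h \in \clq$ for every $h \in \clh$.

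Finally I would deduce the stated unitary equivalence. Viewing $\Pi$ as a unitary from $\clh$ onto $\clq$, the relation reads $M_z^*|_{\clq} = \Pi T^* \Pi^{-1}$ on $\clq$; taking adjoints inside $B(\clq)$ sends $(M_z^*|_{\clq})^*$ to the compression $P_{\clq} M_z|_{\clq}$ and the right-hand side to $\Pi T \Pi^{-1}$, whence $P_{\clq} M_z|_{\clq} = \Pi T \Pi^{-1}$. Thus $T$ is unitarily equivalent to $P_{\clq} M_z|_{\clq}$ on the closed $M_z^*$-invariant subspace $\clq \subseteq H^2(\cld)$. The only genuine obstacle is the isometry step, and purity is precisely the hypothesis that kills the telescoping remainder $\|T^{*(N+1)} h\|^2$; without it $\Pi$ would be merely a contraction rather than an isometry.
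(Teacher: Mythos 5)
Your proposal is correct and takes essentially the same route as the paper: the map you define coefficientwise, $(\Pi h)(z) = \sum_{m \geq 0} D_{T^*}T^{*m}h\, z^m$, is exactly the paper's isometry $(\Pi_T h)(z) = D_{T^*}(I_{\clh} - zT^*)^{-1}h$ written in resolvent form, with $\clq = \Pi_T\clh$ and the same intertwining $\Pi T^* = M_z^*\Pi$. The only difference is that the paper delegates the verification (telescoping/purity argument and the passage to $P_{\clq}M_z|_{\clq}$) to a citation, whereas you carry it out explicitly and correctly.
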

Here the $\cld$-valued Hardy space (for a Hilbert space $\cld$) on the unit disc $\D$ is defined by
\[
H^2({\cld}):= \{ f = \sum_{{k} \in \mathbb{N}} \eta_{{k}} z^{{k}}
\in \mathcal{O}(\D, \cld): \eta_j \in \cld, j \in \mathbb{N},
\|f\|^2 : = \sum_{{k} \in \mathbb{N}} \|\eta_{{k}}\|^2 < \infty\}.
\]
The scalar valued Hardy space of functions on $\D$ is denoted by $H^2$ and the collection of bounded analytic functions over unit disc is denoted by $H^{\infty}$. 

An equivalent way of looking at Theorem \ref{NF} is as follows: every pure contraction dilates to a pure isometry. This article is devoted to the study of this characterization for pairs of commuting pure contractions $(T_1,T_2)$ on Hilbert spaces $\clh$.

We would like to note that our study differs from the approach that directly applies the paradigm of Theorem \ref{NF} to the multivariate setting. The latter paradigm would require commuting contractions to admit isometric dilation to shift operators on vector-valued Hardy spaces on the unit polydisc $\D^n$. Although shift operators are the simplest examples of commuting pure isometries, they satisfy certain natural conditions, which are reflected as conditions on the given tuple of commuting contractions to ensure the existence of a dilation. For instance, Curto and Vasilescu in \cite{CV}, remarkably, observed that a $n$-tuple of commuting pure contractions $T=(T_1,\ldots,T_n)$ on $\clh$ dilates to the shift operators $(M_{z_1},\ldots,M_{z_n})$ on some vector-valued Hardy space on $\D^n$ if it satisfies the positivity condition
\begin{equation}\label{positive}
\mathbb{S}_n^{-1}(T,T^*) \geq 0.
\end{equation}
Here, $\mathbb{S}_n(\bm{z},\bm{w}):= \prod_{i=1}^n (1 -z_i \bar{w_i})^{-1}$, for all $\bm{z},\bm{w} \in \D^n$, is the Szeg\"o kernel for the Hardy space on $\D^n$. In a recent work \cite{BDHS}, Barik et al. proved, by using results in \cite{CV} and certain techniques from \cite{DS} and \cite{Fac}, that a wider class of tuples of commuting contractions admit dilation (in the polydisc setting) to a tractable and explicit tuple of commuting isometries. The class of commuting contractions studied in \cite{BDHS} are based on the tuples considered by Grinshpan et al. in \cite{DKVW}, where they first observed that such tuples satisfy von Neumann inequality on $\D^n$ and, consequently, admit unitary dilation. We refer the reader to the references in \cite{BDHS} and \cite{DKVW}, for a detailed list of results in this direction of research.

Based on the above discussion, (potentially) one sees that there are now approaches that reach beyond the condition (\ref{positive}). Thus, our interest in the dilation problem for pure contractions stems out from the following viewpoints:
\begin{enumerate}
\item[(i)] To identify pairs of commuting pure contractions, which do not necessarily satisfy positivity conditions like (\ref{positive}), but dilate to pairs of commuting pure isometries.
\item[(ii)] Establish the isometric dilation of such pairs on vector-valued Hardy spaces on the unit disc instead of the bidisc.
\end{enumerate}

The latter goal is related to the study of functional models for pairs of commuting contractions, which is far from being complete. The main motivation behind our interest comes from a recent work by Das, Sarkar and the author in \cite{Fac}, where an explicit structure of isometric dilations for pure pairs of commuting contractions had been established. The isometries appearing in this result belong to the class of pure pairs of commuting isometries and their characterization is due to Berger, Coburn and Lebow (abbreviated as BCL). These isometries are the object of our interest and play a central role in our study. Before going into further details, let us recall the definition of pure tuples of operators.

\begin{defn}
A tuple of commuting contractions (isometries) $(T_1,\ldots,T_n)$ on $\clh$ is said to be pure if $T:=\prod_{j=1}^n T_j$ is a pure contraction (isometry).
\end{defn}

In the case of a single isometry, there is a complete description due to Wold (\cite{Wold}) and von Neumann (\cite{Neumann}) which states that: an isometry $V$ on $\clh$ is unitarily equivalent to the direct sum of a unitary $U$ on the space $\clh_{U} = \bigcap_{k \in \Nat} V^k \clh$ and $M_z$ on $H^2(\clw(V))$, where $\clw(V):= (I_{\clh} - VV^*) \clh$. If $V$ is a pure isometry then $\clh_U = \{0\} $ and thus, $V$ becomes unitarily equivalent to $M_z$ on $H^2(\clw(V))$. In the multivariate setting, the structure of commuting tuples of isometries is complicated and till now there has not been a complete description (see \cite{BDF1, BDF2, BDF3, Gaspar, MSS, Y1} for some important progress in this direction). However, this problem has been addressed to a great extent by BCL's characterization (see \cite{BCL}) . Their result for a pure pair of commuting isometries states that:
\begin{thm}\label{BCLthm}(\textsf{Berger, Coburn and Lebow})
A pair of commuting isometries $(V_1,V_2)$ on a Hilbert space $\clh$ is pure if and only if there exists a Hilbert space $\cle$, a unitary $U$ in $\clb(\cle)$ and a orthogonal projection $P$ on $\clb(\cle)$ such that $(V_1,V_2)$ on $\clh$ is unitarily equivalent to a commuting pair of isometries $(M_{\Phi},M_{\Psi})$ on $H^2(\cle)$, where
\begin{equation}\label{BCLform}
\Phi(z) =  (P+zP^{\perp})U^* ; \quad \Psi(z) = U(P^{\perp}+zP)\quad(z \in \D).
\end{equation}
\end{thm}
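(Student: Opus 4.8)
The plan is to reduce the pure pair to a pair of analytic Toeplitz operators on a vector-valued Hardy space and then to force their symbols to be affine in $z$; the converse is a direct verification. First I would pass to the product $V := V_1 V_2$. Since the pair is pure, $V$ is a pure isometry, so by the Wold--von Neumann decomposition recalled above $V$ is unitarily equivalent to $M_z$ on $H^2(\cle)$, where $\cle = \clw(V) = (I_\clh - VV^*)\clh$; fix a unitary $W : \clh \to H^2(\cle)$ with $WV = M_z W$. Because $V_1 V = V_1 V_1 V_2 = V_1 V_2 V_1 = V V_1$, and likewise for $V_2$, the operators $WV_1W^*$ and $WV_2W^*$ commute with $M_z$, hence (by the description of the commutant of the shift) are analytic Toeplitz operators $M_\Phi$ and $M_\Psi$ with $\Phi,\Psi \in H^\infty(\clb(\cle))$. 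As $V_1,V_2$ are isometries, $\Phi,\Psi$ are inner, and $V_1V_2 = V$ forces $\Phi(z)\Psi(z) = \Psi(z)\Phi(z) = zI_\cle$ for $z \in \D$.

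The heart of the argument is to show that $\Phi$ and $\Psi$ have degree at most one. The first move I would make is to upgrade the boundary values from isometric to unitary: for a.e.\ $t$ the isometries $A := \Phi(e^{it})$ and $B := \Psi(e^{it})$ have unitary product $AB = e^{it}I_\cle$, so since $A^*A = I_\cle$ we get $B = A^*(AB)$, whence $BB^* = A^*(AB)(AB)^*A = A^*A = I_\cle$; thus $B$ is unitary, and then $A = (AB)B^*$ is unitary as well. Writing $\Phi(z) = \sum_{k\ge 0} A_k z^k$ and $\Psi(z) = \sum_{k\ge 0} B_k z^k$, unitarity gives $\Phi(e^{it})^* = \Phi(e^{it})^{-1} = e^{-it}\Psi(e^{it})$ a.e. Comparing the Fourier coefficients of the anti-analytic function $\sum_{k\ge 0} A_k^* e^{-ikt}$ with those of $e^{-it}\Psi(e^{it}) = \sum_{k\ge 0} B_k e^{i(k-1)t}$ frequency by frequency, no frequency $\le -2$ occurs on the right, forcing $A_k = 0$ for $k \ge 2$, and symmetrically $B_k = 0$ for $k \ge 2$. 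Hence $\Phi(z) = A_0 + zA_1$ and $\Psi(z) = B_0 + zB_1$.

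With the affine form in hand, extracting $U$ and $P$ is routine. Unitarity of $A_0 + e^{it}A_1$ for all $t$ yields $A_0^*A_1 = A_1^*A_0 = 0$, $A_0A_1^* = A_1A_0^* = 0$ and $A_0A_0^* + A_1A_1^* = I_\cle$. I would then set $U^* := A_0 + A_1$ and $P := A_0A_0^*$; these relations show that $U$ is unitary and that $P$ is an orthogonal projection (for idempotency one uses $A_0A_0^*A_0 = A_0$, which follows from $A_0^*A_0 + A_1^*A_1 = I_\cle$ together with $A_0A_1^* = 0$), and a short computation gives $A_0 = PU^*$ and $A_1 = P^\perp U^*$, so that $\Phi(z) = (P + zP^\perp)U^*$. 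Since $\Psi$ is the unique symbol with $\Phi(z)\Psi(z) = zI_\cle$, inverting on $\Ran P \oplus \Ran P^\perp$ gives $\Phi(z)^{-1} = U(P + z^{-1}P^\perp)$, whence $\Psi(z) = z\Phi(z)^{-1} = U(P^\perp + zP)$, as claimed. For the converse, given any unitary $U$ and projection $P$ I would simply verify $\Phi(z)\Psi(z) = \Psi(z)\Phi(z) = zI_\cle$ and $\Phi(e^{it})^*\Phi(e^{it}) = \Psi(e^{it})^*\Psi(e^{it}) = I_\cle$, which makes $(M_\Phi,M_\Psi)$ a commuting pair of isometries whose product $M_z$ is a pure shift, i.e.\ a pure pair.

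I expect the second step to be the main obstacle: turning ``isometric symbols with product $zI_\cle$'' into the affine normal form. The unlocking observation is that the product being the \emph{unitary} $e^{it}I_\cle$ on the boundary forces each factor to be unitary a.e., after which the degree bound $A_k = 0$ $(k \ge 2)$ falls out of an elementary Fourier comparison; everything downstream is bookkeeping with the four relations among $A_0, A_1$.
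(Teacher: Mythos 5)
Your proposal is correct in substance, but note first that the paper contains no proof of this statement: Theorem \ref{BCLthm} is recalled as background and attributed to \cite{BCL}, so the comparison below is with the classical argument rather than with anything in the text. The classical (BCL) route is geometric and avoids function theory on the boundary entirely: writing $V=V_1V_2$ and $\cle=\clh\ominus V\clh$, one verifies the two orthogonal decompositions
\[
\cle \;=\; (\clh\ominus V_1\clh)\,\oplus\, V_1(\clh\ominus V_2\clh)\;=\;(\clh\ominus V_2\clh)\,\oplus\, V_2(\clh\ominus V_1\clh),
\]
takes $P$ to be the projection of $\cle$ onto one of the summands, defines $U$ by matching the two decompositions, and checks the intertwining with the Wold unitary directly (see, e.g., \cite{BDF3} or \cite{MSS}). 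Your route---commutant of the shift, upgrading the inner symbols to unitary boundary values via $\Phi(e^{it})\Psi(e^{it})=e^{it}I_{\cle}$, then a Fourier comparison forcing the symbols to be affine---is genuinely different; it has the merit of showing that the BCL normal form is \emph{forced} by the symbol equations alone, with the identification of $(U,P)$ reduced to algebra in $A_0,A_1$. Your computations in that algebraic step (unitarity of $A_0+A_1$, idempotency of $A_0A_0^*$, and the identities $A_0=PU^*$, $A_1=P^{\perp}U^*$, $\Psi(z)=z\Phi(z)^{-1}=U(P^{\perp}+zP)$) are all correct.

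Two caveats. First, your use of almost-everywhere boundary values (both in the statement ``inner means isometric a.e.''\ and in the Fourier comparison of boundary functions) relies on the vector-valued Fatou theory, which requires $\cle$ to be separable; the theorem as stated carries no separability hypothesis. This is easily repaired, in a way that also shortens the proof: from $M_{\Phi}^*M_{\Phi}=I$ and $M_{\Phi}M_{\Psi}=M_z$ you get $M_{\Psi}=M_{\Phi}^*M_z$, and evaluating $M_{\Phi}^*M_z$ on constant functions $\xi\in\cle$ gives $\Psi(z)\xi=A_1^*\xi+zA_0^*\xi$; symmetrically $M_{\Phi}=M_{\Psi}^*M_z$ gives $\Phi(z)\xi=B_1^*\xi+zB_0^*\xi$. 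This yields the degree bound and the relations $B_0=A_1^*$, $B_1=A_0^*$ with no boundary values at all, and the four orthogonality relations among $A_0,A_1$ then follow from $M_{\Phi}^*M_{\Phi}=I=M_{\Psi}^*M_{\Psi}$ applied to $\xi$ and $z\eta$, after which your extraction of $(U,P)$ goes through verbatim. Second, a minor bookkeeping point: the list of relations you derive from boundary unitarity omits $A_0^*A_0+A_1^*A_1=I_{\cle}$, which you in fact invoke later to prove $A_0A_0^*A_0=A_0$; it comes from the same comparison applied to $\Phi(e^{it})^*\Phi(e^{it})=I_{\cle}$ (or, in the boundary-free version, from $M_{\Phi}^*M_{\Phi}=I$ on constants), so the gap is cosmetic, but the relation should be stated.
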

It follows that $M_{\Phi}M_{\Psi} = M_{\Psi}M_{\Phi} = M_z$. Inspired by the above result, an ordered triple of spaces and operators $(\cle,U,P)$ is said to be a BCL triple if $\cle$ is a Hilbert space, $U$ is a unitary and $P$ is an orthogonal projection on $\cle$, respectively. We shall say that $(M_{\Phi},M_{\Psi})$ is a pair of isometries associated to a BCL triple $(\cle,U,P)$ if and only if the symbols $\Phi(z),\Psi(z)$ are in the form of (\ref{BCLform}).

The BCL structure was developed by the authors to study representation and Fredholm theory for C*-algebras generated by commuting isometries. However, in recent times, the BCL structure has found applications in understanding the structure of isometric dilation for commuting contractions (see \cite{BDHS, MB, Fac, Sau}). Following this, there has been an important development on a functional model for pair of commuting contractions by Ball and Sau (\cite{BS}). The BCL structure has also been used by Bhattacharyya, Kumar and Sau to obtain some remarkable new characterizations for distinguished varieties in the bidisc, polydisc and the symmetrized bidisc (see \cite{BKS}).

Our work in this article begins by analysing pure tuples of commuting isometries \\ $(V_1,\ldots,V_n)$. In section \ref{sec-pureisom}, we find a necessary and sufficient condition for an isometry $V \in (V_1,\ldots,V_n)$ to become a pure isometry (see Proposition \ref{compure1}). Applying this result to isometries $(M_{\Phi}, M_{\Psi})$ associated to a BCL triple $(\cle,U,P)$, we obtain that if
\[
U = \begin{bmatrix}
A & B \\C & D
\end{bmatrix} : P^{\bot} \cle \oplus P \cle \raro P^{\bot} \cle \oplus P \cle,
\]
then $M_{\Phi}, M_{\Psi}$ are pure isometries if and only if $A$ and $D^*$ are pure contractions, respectively (see Theorem \ref{compure3}). Thus, diagonal entries of $U$ play an important role throughout our study.

Returning to the work in \cite{Fac}: the authors have proved that a pure pair of commuting contractions always dilates to pair of isometries $(M_{\Phi},M_{\Psi})$ associated to some BCL triple $(\cle,U,P)$. The unitary $U$ appearing in the BCL triple is exactly the unitary that had first appeared in And\^o's remarkable generalization of Sz.Nagy's theorem for contractions (see \cite{An}). The need for the existence of $U$ came from a compatibility condition that was essential for the existence of commuting unitary dilations. In section \ref{sec-purecont}, we show that this unitary has several other important features when we consider a pair of commuting pure contractions $(T_1,T_2)$, to begin with. To establish these results, we focus on an abstract setting, where such pairs $(T_1,T_2)$ have a canonical BCL triple of the form $(\cle,U,P)$ associated to them. In this situation, Theorem \ref{purecrit} shows that the diagonal entries $A$ and $D^*$ become completely non co-isometric contractions (see Section \ref{sec-prelim} for definitions). This result has an immediate effect on the symbols of the isometries $(M_{\Phi},M_{\Psi})$. More precisely, the pair of partial isometries $(\Phi(0),\Psi(0))$ always possesses the \textit{wandering subspace property} (see  Definition \ref{wander} and Corollary \ref{defects}).

Using the properties of the diagonal elements of the unitary $U$ we establish the main results of this article. In section \ref{sec-dilcont}, we prove that in the following cases, $(T_1,T_2)$ always dilates to a pair of commuting pure isometries:
\begin{enumerate}
\item[(i)] $(T_1,T_2)$ such that $\mbox{dim }\overline{(I - T_iT_i^*)^{\frac{1}{2}}\clh} < \infty$ for $i =1,2$. 
\item[(ii)] $(T_1,T_2)$ is a pair of commuting pure partial isometries. 
\end{enumerate}
The proof of statement $(i)$ as can be seen in Theorem \ref{purecont1}, is a direct consequence of the fact that in this situation, there always exists a BCL triple associated to $(T_1,T_2)$, and the diagonal entries of the unitary satisfy the pure conditions in Theorem \ref{compure3}. On the other hand, statement $(ii)$ requires a new technique to extend the result from pairs with BCL triples to arbitrary pairs (see Theorem \ref{purecont2}). We end this section with another characterization which involves an approach  different from the results in section \ref{sec-purecont}. More precisely, if we consider unitaries of the form 
\[
U = \begin{bmatrix}
0 & W_1 \\ W_2 & 0
\end{bmatrix} : P^{\bot} \cle \oplus P \cle \raro P^{\bot} \cle \oplus P \cle,
\]
then from Theorem \ref{compure3} we observe that $(\cle,U,P)$ contributes to pure isometries $(M_{\Phi},M_{\Psi})$. In Theorem \ref{normpure}, we identify pairs $(T_1,T_2)$ for which such a unitary can be constructed and moreover, admits dilation to $(M_{\Phi},M_{\Psi})$. This turn out to be pairs satisfying certain certain norm conditions on the defect operators $D_{T_i^*}: = (I - T_iT_i^*)^{\frac{1}{2}}$, for $i=1,2$.

In section \ref{sec-vnI}, we obtain a von Neumann inequality for pairs $(T_1,T_2)$ appearing in statement $(i)$ of the above discussion. Furthermore, we show that our characterization recovers the von Neumann inequality obtained by Agler, McCarthy in \cite{AM1} and also the extended result by Das, Sarkar in \cite{DS}.

Section \ref{sec-prelim} serves as a prelude to all the other sections. It contains the preliminary results, definitions and notations required throughout this article.

\section{Notation and Preliminaries}\label{sec-prelim}
In this section, we set notations and recall definitions and some results. We also prove some results essential for this article.  The monographs by Agler, McCarthy \cite{AM2} and Bercovici, Foias, K\'erchy, Sz.-Nagy \cite{NF} are excellent references for most of the details in this section.

Let us begin by recalling some properties of vector-valued Hardy spaces. The Hardy space $H^2(\cle)$ is also a reproducing kernel Hilbert space with the reproducing kernel function \\$K:\D\times \D \raro \clb(\cle)$ defined by
\[
K(z,w) = s(z,w) I_{\cle} \quad (z,w \in \D),
\]
where $s(z,w)=(1-z\bar{w})^{-1}$ is the Szeg\"o kernel for the scalar Hardy space $H^2$. More precisely, for all $f \in H^2(\cle)$ the following reproducing property is satisfied
\[
\langle f(w), \eta \rangle = \langle f, K(\cdot,w)\eta \rangle   \quad (w \in \D, \eta \in \cle).
\]
It follows that $\{ K(\cdot,w) \eta: w \in \D, \eta \in \cle \}$ is a total set for $H^2(\cle)$. Let $H^{\infty}(\clb(\cle))$ denote the collection of $\clb(\cle)$-valued bounded analytic functions on $\D$, where $\clb(\cle)$ is the set of  bounded operators on Hilbert space $\cle$. It follows from the closed graph theorem that $\Phi \in H^{\infty}(\clb(\cle))$ induces a bounded multiplication operator $M_{\Phi} \in \clb(H^2(\cle))$, which for all $w \in \D$ and $\eta \in \cle$ satisfies,
\[
M_{\Phi}^* (K(\cdot,w) \eta) = K(\cdot,w) \Phi(w)^* \eta.
\]
The above identity is then used to show that $\|M_{\Phi}\| = \mbox{sup}_{z \in \D} \{\|\Phi(z)\|_{\clb(\cle)} \}$.

An important way to construct multiplication operators on Hardy spaces is through the method of transfer functions (cf. \cite{AM2}). Let $\clh_1$ and $\clh_2$ be two  Hilbert spaces, and 
\[U = \begin{bmatrix}A&B\\C&D\end{bmatrix} \in \clb(\clh_1 \oplus
\clh_2),
\]
be a unitary operator. Then the $\clb(\clh_1)$-valued analytic function $\tau_U$ on $\mathbb{D}$ defined by 
\[\tau_U (z) := A + z B (I-z D)^{-1} C \quad \quad (z \in \D), \]
is called the \textit{transfer function} of $U$. Using $U^* U = I$,
a standard and well known computation yields (cf. \cite{DS})
\begin{equation}\label{isometry}
I- \tau_U (z)^*\tau_U (z) = (1- \mid z\mid^2) C^*(I-\bar{z} D^*)^{-1}(I-z
D)^{-1} C \quad \quad (z\in\D).
\end{equation}
This identity shows that $\tau_{U}$ is a contractive operator valued analytic function or equivalently, $\|M_{\tau_U}\| \leq 1$. In particular, if we assume the Hilbert spaces $\clh_1,\clh_2$ to be finite dimensional, then $\tau_{U}$ becomes a matrix-valued rational inner function and therefore, $M_{\tau_U}$ becomes an isometry (see page 138, \cite{AM2}). Hardy spaces and multiplication operators play an important role in the a functional model for contractions. Before going into further details, let us recall the definition of dilation of contractions.

\begin{defn}
A tuple of operators $V=(V_1,\ldots,V_n)$ on $\clk$ is said to be a dilation of $T=(T_1,\ldots,T_n)$ on $\clh$ if there exists an isometry $\Pi:\clh \raro \clk$ such that $\Pi T_i^* = V_i^* \Pi$, for $i=1,\ldots,n$. A dilation $(V,\clk)$ of $(T,\clh)$ is said to be minimal if $\clk = \overline{\mbox{span}} \{V^{\bm k}h: \bm{k} \in \Nat^n, h \in \clh\}$.
\end{defn}

In the sequel, we will review the minimal isometric dilation of a pure contraction. For a contraction $T$ (equivalently, $I_{\clh} - TT^* \geq 0$) recall that the defect operator is $D_{T} := (I_{\clh} - T^* T)^{\frac{1}{2}}$, and the corresponding defect spaces are
\[
\cld_{T} := \overline{D_T \clh}; \quad  \cld_{T^*} := \overline{D_{T^*} \clh}.
\]

If $T$ is pure, then the isometry $\Pi_T : \clh \raro H^2(\cld_{T^*})$ defined by, 
\begin{equation}\label{dil-def}
(\Pi_T h)(z) = D_{T^*}(I_{\clh} - z T^*)^{-1}h \quad \quad (z \in \D, h
\in \clh),
\end{equation}
satisfies $\Pi_T T^* = M_z^* \Pi_T$ (cf. \cite{JS}). Thus, proving that $T$ dilates to $M_z$ on $H^2(\cld_{T^*})$. This isometric dilation is moreover minimal, that is,
\begin{equation}\label{min}
H^2(\cld_{T^*}) =
\overline{\mbox{span}} \{z^m \Pi_{T} f : m \in \mathbb{N}, f \in \clh\},
\end{equation}
and hence unique in an appropriate sense (see \cite{NF}). In particular, this discussion yields a proof of Theorem \ref{NF} by showing that every pure contraction is unitarily equivalent to $P_{\clq}M_z|_{\clq}$, where $\clq:= \Pi_T \clh \subseteq H^2(\cld_{T^*})$.

For the purpose of this article, we will now focus on orthogonal decompositions for contractions. The most fundamental result on such decompositions is due to Sz.-Nagy, Foias and Langer which states that a contraction $T$ is always the orthogonal direct sum of a unitary and a \textit{completely non unitary} (c.n.u.) contraction. A contraction $T$ on $\clh$ is c.n.u. if and only if it does not have any unitary summand. In particular, the subspace
\[
\clh_U (T^*) := \{h \in \clh: \|T^{*n}h\| = \|h\| = \|T^n h\| \text{ for all }n \in \Nat\},
\]
which is also the largest reducing subspace on which $T$ is a unitary should be equal to $\{0\}$ (see \cite{NF}). By definition, it clearly follows that $\clh_U(T) = \clh_{i}(T) \cap \clh_{i}(T^*)$, where
\[
\clh_{i}(T) := \{h \in \clh: \|T^{n}h\| = \|h\| \text{ for all }n \in \Nat\}.
\]
This is also a closed subspace due to the following fact
\[
\clh_{i}(T) = \bigcap_{n \in \Nat}\mbox{ker} D_{T^{n}} = \{h \in \clh: T^{*n}T^{n}h = h \text{ for all }n \in \Nat\}.
\]
$\clh_{i}(T^*)$ is also the largest $T^*$-invariant subspace on which $T$ is a co-isometry (cf. \cite{Levan}). A contraction $T$ is said to be a \textit{completely non co-isometric} (c.n.c.) contraction if and only if $\clh_{i}(T^*) = \{0\}$. For a c.n.u. contraction there always exists an orthogonal decomposition of the Hilbert space $\clh$, with respect to the c.n.c. part. This the content of  [Lemma 2.2, \cite{ZYL}] which states that for a c.n.u. contraction $T \in \clb(\clh)$ we always have
\begin{equation}\label{orth}
\clh = \bigvee_{k \in \Nat} T^k \cld_{T^*} \oplus \clh_{i}(T^*).
\end{equation}

In particular, for a c.n.c. contraction $T$, the above orthogonal decomposition of $\clh$ implies that $\clh = \bigvee_{k \in \Nat}T^k \cld_{T^*}$. For partial isometries this result has an important implication. Recall that a partial isometry $T$ on $\clh$ is defined to be a bounded operator that is isometric on the orthogonal complement of its kernel. An equivalent criterion for $T$ to be a partial isometry is
\begin{equation}\label{char_pi}
\mbox{ker }T^* = \cld_{T^*}.
\end{equation}

Before stating the implication, let us recall the definition of \textit{wandering subspace property} as given by Shimorin (see \cite{Shimorin}).
\begin{defn}\label{wander}
A bounded operator $T \in \clb(\clh)$ is said to possess the wandering subspace property if $\clh = \bigvee_{k \in \Nat} T^k \mbox{ ker } T^*$.
\end{defn}

The following characterization is a direct consequence of condition (\ref{orth}) and (\ref{char_pi}).
\begin{propn}\label{partial_wander}
A c.n.u. partial isometry possesses the wandering subspace property if and only if it is a c.n.c. contraction.
\end{propn}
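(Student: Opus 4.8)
The plan is to prove the equivalence by combining exactly the two structural facts recalled immediately before the statement: the orthogonal decomposition (\ref{orth}), which is available because $T$ is assumed c.n.u., and the characterization (\ref{char_pi}) of partial isometries. The key observation is that for a partial isometry the wandering subspace in Definition \ref{wander} coincides with the defect space $\cld_{T^*}$, so the wandering subspace property is literally the statement that the first summand in (\ref{orth}) exhausts $\clh$.

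First I would use (\ref{char_pi}) to rewrite the kernel appearing in the wandering subspace property. Since $T$ is a partial isometry, $\ker T^* = \cld_{T^*}$, and therefore
\[
\bigvee_{k \in \Nat} T^k \ker T^* = \bigvee_{k \in \Nat} T^k \cld_{T^*}.
\]
Thus $T$ possesses the wandering subspace property if and only if $\clh = \bigvee_{k \in \Nat} T^k \cld_{T^*}$.

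Next, since $T$ is c.n.u., the decomposition (\ref{orth}) applies and gives the \emph{orthogonal} direct sum
\[
\clh = \bigvee_{k \in \Nat} T^k \cld_{T^*} \oplus \clh_{i}(T^*).
\]
Because this is an orthogonal decomposition, the closed subspace $\bigvee_{k \in \Nat} T^k \cld_{T^*}$ equals all of $\clh$ precisely when its orthogonal complement $\clh_{i}(T^*)$ is trivial. Hence the wandering subspace property is equivalent to $\clh_{i}(T^*) = \{0\}$, which is by definition the assertion that $T$ is c.n.c. Reading the two implications off this chain of equivalences: if $T$ is c.n.c. then (\ref{orth}) collapses to $\clh = \bigvee_{k} T^k \cld_{T^*} = \bigvee_{k} T^k \ker T^*$, giving the wandering subspace property; conversely, the wandering subspace property forces the first summand to fill $\clh$, and orthogonality forces $\clh_{i}(T^*)=\{0\}$.

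There is no genuine obstacle here, as the proposition is essentially a bookkeeping consequence of the cited results; the only point requiring care is to invoke (\ref{orth}) in its full strength as an orthogonal direct sum (so that the vanishing of the complementary summand is equivalent to, and not merely implied by, the first summand being dense), and to make sure that the c.n.u. hypothesis in the statement is exactly what licenses the use of (\ref{orth}).
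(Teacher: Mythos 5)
Your proof is correct and is exactly the argument the paper intends: the paper states the proposition as a direct consequence of the orthogonal decomposition (\ref{orth}) and the characterization (\ref{char_pi}), which is precisely the chain of equivalences you spell out. Your write-up simply makes explicit the bookkeeping that the paper leaves to the reader.
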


We will now prove a result concerning transfer functions and c.n.c. contractions, which may be of independent interest. 
\begin{propn}\label{transfer_cnu}
Let $\tau_U(z)$ be a transfer function corresponding to a unitary $U \in \clb(\clh_1 \oplus \clh_2)$. If $\tau_U(0)$ is a c.n.c. contraction then $\tau_U(\lambda)$ is a c.n.u. contraction for all $\lambda \in \D$.
\end{propn}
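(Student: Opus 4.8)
The plan is to show that, for every $\lambda \in \D$, the largest reducing subspace of $\tau_U(\lambda)$ on which it acts as a unitary is trivial. Writing $U = \begin{bmatrix} A & B \\ C & D\end{bmatrix}$, note first that $\tau_U(0) = A$, so the hypothesis is precisely $\clh_i(A^*) = \{0\}$. Fix $\lambda \in \D$ and set $T_\lambda := \tau_U(\lambda)$, which is a contraction by (\ref{isometry}). Let $\clm := \clh_U(T_\lambda^*)$ be the largest reducing subspace on which $T_\lambda$ is unitary; establishing $\clm = \{0\}$ is exactly the c.n.u. conclusion. My strategy is to transfer the vanishing of the defect at $\lambda$ back to the defect of $A$, so that membership in $\clm$ forces membership in $\clh_i(A^*)$.

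The key ingredient beyond (\ref{isometry}) is the companion identity for $I - T_\lambda T_\lambda^*$, and I would obtain it from the self-duality $U \mapsto U^*$. Since $U^* = \begin{bmatrix} A^* & C^* \\ B^* & D^* \end{bmatrix}$ is again unitary and a direct computation gives $\tau_U(\lambda)^* = \tau_{U^*}(\bar\lambda)$, applying (\ref{isometry}) to $U^*$ at the point $\bar\lambda$ yields
\[
I - T_\lambda T_\lambda^* = (1 - |\lambda|^2)\, B (I - \lambda D)^{-1}(I - \bar\lambda D^*)^{-1} B^*.
\]

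Now for $h \in \clm$, unitarity gives $\|T_\lambda^* h\| = \|h\|$, so pairing the companion identity against $h$ forces $\|(I - \bar\lambda D^*)^{-1} B^* h\| = 0$, hence $B^* h = 0$; consequently $T_\lambda^* h = A^* h + \bar\lambda C^*(I-\bar\lambda D^*)^{-1} B^* h = A^* h$. Because $\clm$ reduces $T_\lambda$, the vector $A^* h = T_\lambda^* h$ again lies in $\clm$, on which $T_\lambda^*$ is isometric, so repeating the argument inductively gives $T_\lambda^{*n} h = A^{*n} h$ with $\|A^{*n} h\| = \|h\|$ for all $n \in \Nat$. Thus $h \in \clh_i(A^*)$, and the hypothesis $\clh_i(A^*) = \{0\}$ gives $h = 0$; hence $\clm = \{0\}$ and $T_\lambda$ is c.n.u.

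I expect the main obstacle to be establishing the companion identity cleanly, after which the argument reduces to the short induction above that keeps the iterates inside the reducing subspace $\clm$. It is worth noting that running the same reasoning with $\clh_i(T_\lambda^*)$ in place of $\clm$ (using only that it is $T_\lambda^*$-invariant and that $T_\lambda^*$ is isometric there) yields the stronger conclusion that $\tau_U(\lambda)$ is in fact completely non co-isometric, which in particular implies the stated c.n.u. property.
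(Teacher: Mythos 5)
Your proof is correct, and it takes a genuinely different route from the paper's, even though both ultimately reduce the problem to the behaviour of $A=\tau_U(0)$ on the would-be unitary part. The paper's mechanics are these: writing $P_{\clm}$ for the projection onto a reducing subspace $\clm$ on which $\tau_U(\lambda)$ is unitary, it compresses (\ref{isometry}) itself to $\clm$ to get $CP_{\clm}=0$ (this step uses the \emph{isometric} half of unitarity), then expands $P_{\clm}\tau_U(\lambda)P_{\clm}\tau_U(\lambda)^*$ blockwise to obtain $AP_{\clm}A^*=P_{\clm}$, shows via $A^*A+C^*C=I$ that $\clm$ reduces $A$, and contradicts the c.n.c.\ hypothesis. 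You instead work with the companion identity for $I-\tau_U(\lambda)\tau_U(\lambda)^*$, which you derive correctly from the duality $\tau_U(\lambda)^*=\tau_{U^*}(\bar\lambda)$ (the blocks of $U^*$ put $B^*$ in the role of $C$ and $D^*$ in the role of $D$), and this identity kills $B^*h$ for any $h$ satisfying only $\|\tau_U(\lambda)^*h\|=\|h\|$. Two consequences of this choice are worth noting. First, you never need to show that $\clm$ reduces $A$: the pointwise iteration $\tau_U(\lambda)^{*n}h=A^{*n}h$ inside a $\tau_U(\lambda)^*$-invariant subspace does all the work. Second, and more significantly, your argument consumes only the co-isometric half of the hypothesis on $\clm$, so—as your closing remark correctly observes—it proves the stronger statement that $\tau_U(\lambda)$ is c.n.c.\ for every $\lambda\in\D$, not merely c.n.u.; the paper's proof cannot be run with $\clh_i(\tau_U(\lambda)^*)$ in place of $\clh_U(\tau_U(\lambda)^*)$, because it needs $CP_{\clm}=0$ before the co-isometric compression collapses to $AP_{\clm}A^*$. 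Your stronger conclusion would correspondingly sharpen Corollary \ref{symbols_cnu} and aligns Proposition \ref{transfer_cnu} more naturally with the c.n.c.\ conclusion of Theorem \ref{purecrit}. What the paper's route buys in exchange is modest: it stays entirely within the single identity (\ref{isometry}) and block algebra for $U$ itself, never invoking the transfer function of $U^*$.
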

\begin{proof}
For a $\lambda \in \D$, let us assume that $\clm$ is a $\tau_U(\lambda)$-reducing subspace of $\clh_1$ such that $\tau_U(\lambda)|_{\clm}$ is a unitary. More precisely,
\[
P_{\clm} (I_{\clh_1 \oplus \clh_2} - \tau_U(\lambda)^* \tau_U(\lambda))P_{\clm}=0
\]
and 
\[
P_{\clm} (I_{\clh_1 \oplus \clh_2} - \tau_U(\lambda) \tau_U(\lambda)^*)P_{\clm}=0
\]
From condition (\ref{isometry}) we have
\begin{align*}
P_{\clm} (1- |\lambda|^2) C^*(I-\bar{\lambda} D^*)^{-1}(I-\lambda D)^{-1} C P_{\clm} = P_{\clm} (I_{\clh_1 \oplus \clh_2} - \tau_U(\lambda)^* \tau_U(\lambda))P_{\clm} = 0.
\end{align*}
which implies that $C P_{\clm} = 0$. Using this result in the second identity we get 
\begin{align*}
P_{\clm} (I_{\clh_1 \oplus \clh_2} - \tau_U(\lambda) \tau_U(\lambda)^*) P_{\clm} &= P_{\clm}  - \tau_U(\lambda) P_{\clm} \tau_U(\lambda))^*\\
&= P_{\clm}  - (A + \lambda B (I-\lambda D)^{-1} C)P_{\clm}(A^* + \bar{\lambda} C^* (I- \bar{\lambda} D^*)^{-1} B^*)\\
&=  P_{\clm}  - A P_{\clm}A^* .
\end{align*}
Thus, we have $A P_{\clm}A^* = P_{\clm}$. Since, $U$ is a unitary we have $A^*A+C^*C = I_{\clh}$ and therefore,
\[
P_{\clm}A = A P_{\clm}A^*A = A P_{\clm}(I_{\clh_1} - C^* C) = A P_{\clm}.
\]
Hence, $\clm$ is a $A$-reducing subspace of $\clh_1$ and furthermore, $ A P_{\clm} (AP_{\clm})^* = P_{\clm}$. This implies that $A$ is a co-isometry on the reducing subspace $\clm$, which is a contradiction to the assumption that $A$ is a c.n.c. contraction and thus $\clm=\{0\}$. This completes the proof.
\end{proof}

Apart from the above properties of pure contractions, partial isometries and transfer functions, our study involves a specific contraction associated to tuples of isometries. More precisely, let $(V_1,\ldots,V_n)$ be a pure tuple of commuting isometries on $\clh$ and let $\hat{V_j} := \prod_{k=1, k \neq j}^n V_k$.
\begin{defn}\label{fringe}
For any $j \in \{1,\ldots,n\}$, the fringe operator $F_{V_j}: \clh \ominus \hat{V_j} \clh \raro \clh \ominus \hat{V_j} \clh$ with respect to the isometry $V_j$ is defined by $F_{V_j} := P_{\clh \ominus \hat{V_j} \clh} V_j |_{\clh \ominus \hat{V_j} \clh}$.
\end{defn} 
Here, $P_{\clh \ominus \hat{V_j} \clh}$ denotes the orthogonal projection onto the closed subspace $\clh \ominus \hat{V_j} \clh$. The fringe operator for pairs of commuting isometries has been studied extensively by Bercovici, Douglas and Foias (in \cite{BDF3}) under the name of a ``pivotal'' operator. A remarkable result in their paper states that: every contraction on a separable Hilbert space, up to unitary equivalence, is the fringe operator corresponding to a pure pair of commuting isometries. We refer the reader to papers \cite{Yang, MSS} for more results on fringe operators and pairs of commuting isometries. 

\section{Commuting pure isometries}\label{sec-pureisom}
In this section, our aim is to find conditions on BCL triples $(\cle,U,P)$ such that the corresponding BCL type isometries $M_{\Phi}$, $M_{\Psi}$ become individually pure.

Let $(V_1,\ldots,V_n)$ be a $n$-tuple of commuting isometries on $\clh$. Note that $V_j$ is an isometry implies that the operator inequality $V_j^m (I_{\clh} - V_j V_j^*)V_j^{*m} \geq 0$ holds for each $m \in \Nat$. Hence, $\{V_j^m V_j^{*m}\}_{m=0}^{\infty}$ is a monotonically decreasing sequence of positive-semidefinite contractions. Thus, there exists a positive contraction (denoted by) $A_{V_j}$ such that $\mbox{SOT--}\lim_{m \raro \infty} V_j^m V_j^{*m} = A_{V_j}^2$. It follows from this convergence that $V_j A_{V_j}^2 V_j^* =  A_{V_j}^2$ and since $V_j$ is an isometry, it follows that $V_j A_{V_j}^2 =   A_{V_j}^2 V_j$. Moreover, the following identities (see Proposition 1, \cite{Duggal})
\[
 A_{V_j}^4 = \mbox{SOT--} \lim_{m \raro \infty}  A_{V_j}^2 V_j^{m}V_j^{*m}  =  \mbox{SOT--} \lim_{m \raro \infty}  V_j^{m}  A_{V_j}^2 V_j^{*m} = \mbox{SOT--} \lim_{m \raro \infty}A_{V_j}^2 = A_{V_j}^2,
\]
prove that $A_{V_j}$ is a projection and therefore,  
\begin{equation}\label{eq_1}
A_{V_j} V_j^* =  V_j^* A_{V_j},
\end{equation}
Also, for all $h \in \clh$ we have
\[
\|A_{V_j}h\|^2 = \langle A_{V_j}h,A_{V_j}h\rangle = \langle A_{V_j}^2h,h\rangle = \lim_{m \raro \infty} \langle V_j^m V_j^{*m}h,h\rangle = \lim_{m \raro \infty} \|V_j^{*m}h\|^2,
\] 
which also implies that $\|A_{V_j}h\| = \|A_{V_j}V_j^{*k}h\|$ for all $k \in \Nat$. We are now ready to observe one of the main results of this section. 

\begin{propn}\label{compure1}
Let $(V_1,\ldots,V_n)$ be a pure tuple of commuting isometries on $\clh$ (that is, $V = \Pi_{k=1}^n V_k$ is a pure isometry on $\clh$). Then for any $j \in \{1,\ldots,n\}$, $V_{j}$ is a pure isometry if and only if the fringe operator $F_{V_j}$ is a pure contraction on $\clh \ominus \hat{V_j} \clh$.
\end{propn}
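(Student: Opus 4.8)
The plan is to turn the fringe operator into an honest restriction of $V_j^*$ and then to isolate the unitary part of $V_j$. Write $\hat{V_j}=\prod_{k\neq j}V_k$ (an isometry) and $\clf:=\clh\ominus\hat{V_j}\clh=\Ker\hat{V_j}^*$. Since the $V_k$ commute, $V_j(\hat{V_j}\clh)=\hat{V_j}(V_j\clh)\subseteq\hat{V_j}\clh$, so $\hat{V_j}\clh$ is $V_j$-invariant and hence $\clf$ is $V_j^*$-invariant. Therefore $F_{V_j}^*=P_{\clf}V_j^*|_{\clf}=V_j^*|_{\clf}$, and inductively
\[
F_{V_j}^{*m}=V_j^{*m}|_{\clf}\qquad(m\in\Nat).
\]
This identity disposes of the forward implication at once: if $V_j$ is pure then $V_j^{*m}\raro 0$ strongly on $\clh\supseteq\clf$, so $F_{V_j}^{*m}\raro 0$ strongly and $F_{V_j}$ is a pure contraction.

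For the converse I would assume $F_{V_j}$ pure (equivalently, $V_j^{*m}\raro 0$ on $\clf$) and show $V_j$ is pure. Let $\clm:=\Ran A_{V_j}=\bigcap_m V_j^m\clh$ be the unitary part of $V_j$; by the relations established just before the statement, $\clm$ reduces $V_j$, $V_j|_{\clm}$ is unitary, and $V_j$ is pure exactly when $\clm=\{0\}$. Since $\|A_{V_j}h\|=\lim_m\|V_j^{*m}h\|$ with $\{\|V_j^{*m}h\|\}$ nonincreasing, a reducing-subspace computation gives $V_j^{*m}h\raro 0$ iff $h\perp\clm$; hence the hypothesis says precisely $\clf\perp\clm$, i.e.
\[
\clm\subseteq(\Ker\hat{V_j}^*)^{\perp}=\hat{V_j}\clh\qquad(\star).
\]
Moreover $\hat{V_j}V_j^m\clh=V_j^m\hat{V_j}\clh\subseteq V_j^m\clh$ shows that $\clm$ is $\hat{V_j}$-invariant.

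The crux, and the step I expect to be the main obstacle, is to promote this to $\hat{V_j}$-\emph{reducingness} of $\clm$. The difficulty is that commutation of $V_j$ and $\hat{V_j}$ gives only $V_j^*\hat{V_j}^*=\hat{V_j}^*V_j^*$, and \emph{not} $V_j^*\hat{V_j}=\hat{V_j}V_j^*$, so $\hat{V_j}^*$-invariance of $\clm$ is not formal; this is exactly where $(\star)$ enters. For $\nu\in\clm$ and every $m$ one has $V_j^{*m}\nu\in\clm$ (reducing), hence $V_j^{*m}\nu\in\hat{V_j}\clh$ by $(\star)$, so $\hat{V_j}^*$ is norm preserving there; using the commuting adjoints,
\[
\|V_j^{*m}\hat{V_j}^*\nu\|=\|\hat{V_j}^*V_j^{*m}\nu\|=\|V_j^{*m}\nu\|=\|\nu\|=\|\hat{V_j}^*\nu\|.
\]
Thus $\lim_m\|V_j^{*m}(\hat{V_j}^*\nu)\|=\|\hat{V_j}^*\nu\|$, which forces $\hat{V_j}^*\nu\in\clm$. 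Hence $\clm$ reduces $\hat{V_j}$, and therefore reduces $V=V_j\hat{V_j}$.

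Finally I would play the unitary part against the purity of $V$. Restricting the pure isometry $V$ to the reducing subspace $\clm$ gives a pure isometry $V|_{\clm}$; since $V_j|_{\clm}$ is unitary and commutes with $V|_{\clm}$, the factorization $\hat{V_j}|_{\clm}=(V_j|_{\clm})^*V|_{\clm}$ yields $(\hat{V_j}|_{\clm})^{*m}=(V|_{\clm})^{*m}(V_j|_{\clm})^m$, whence $\|(\hat{V_j}|_{\clm})^{*m}x\|=\|(V|_{\clm})^{*m}x\|\raro 0$ and $\hat{V_j}|_{\clm}$ is itself pure. On the other hand, by $(\star)$ its cokernel $\clm\ominus\hat{V_j}\clm=\clm\cap\Ker\hat{V_j}^*\subseteq\clm\cap\clf=\{0\}$, so $\hat{V_j}|_{\clm}$ is a surjective isometry, i.e. unitary. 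A pure isometry can be unitary only on $\{0\}$, so $\clm=\{0\}$ and $V_j$ is pure, which would complete the proof.
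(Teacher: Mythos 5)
Your proposal is correct, and the forward direction coincides with the paper's. The converse, however, follows a genuinely different route. The paper works directly with the projection $A_{V_j}$ (whose range is your $\clm$): after the same first step (the hypothesis forces $A_{V_j}\clh\subseteq\hat{V_j}\clh$, your $(\star)$), it runs an induction with iterated inner-product identities to show $A_{V_j}\clh\subseteq\hat{V_j}^{k}\clh$ for every $k$, i.e.\ that $A_{V_j}\clh$ sits inside the unitary part $\bigcap_{k}\hat{V_j}^{k}\clh$ of the Wold decomposition of $\hat{V_j}$, and then kills it by the computation $\|A_{V_j}h\|=\|\hat{V_j}^{*n}V_j^{*n}A_{V_j}h\|=\|V^{*n}A_{V_j}h\|\raro 0$. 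You instead upgrade $(\star)$ to the statement that $\clm$ \emph{reduces} $\hat{V_j}$ — your norm-rigidity argument $\|V_j^{*m}\hat{V_j}^{*}\nu\|=\|\hat{V_j}^{*}\nu\|$ forcing $\hat{V_j}^{*}\clm\subseteq\clm$ is the key step, and it is valid precisely because $\lim_m\|V_j^{*m}h\|=\|P_{\clm}h\|$ — and then finish inside $\clm$: there $\hat{V_j}|_{\clm}$ is unitary (zero cokernel by $(\star)$, using that $\clm$ reduces $\hat{V_j}$ so that $\clm\ominus\hat{V_j}\clm=\clm\cap\Ker\hat{V_j}^{*}$) and simultaneously pure (from purity of $V|_{\clm}$ and unitarity of $V_j|_{\clm}$, via the commutation $A^{*}B=BA^{*}$ valid for a unitary $B$ commuting with $A$), whence $\clm=\{0\}$. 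What each approach buys: yours is more structural, replacing the paper's induction by a single reducing-subspace lemma and a clean ``pure $+$ unitary $\Rightarrow$ trivial'' ending; the paper's is more computational and self-contained, never needing to establish that $\clm$ reduces $\hat{V_j}$ or to invoke Wold-decomposition geometry beyond the definition of $A_{V_j}$. Both hinge on the same two pillars: the hypothesis enters only through $(\star)$, and the purity of the product $V$ delivers the final blow.
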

\begin{proof}
Let us begin by proving the necessary part. Since $(V_1,\ldots,V_n)$ is a commuting tuple therefore, $\hat{V_j} \clh$ is a $V_j$-invariant closed subspace of $\clh$ which further implies that $\clh \ominus \hat{V_j} \clh$ is a $V_j^*$-invariant subspace of $\clh$. Hence,
\[
F_{V_j}^* = \big(P_{\clh \ominus \hat{V_j} \clh} V_j P_{\clh \ominus \hat{V_j} \clh}\big)^* = P_{\clh \ominus \hat{V_j} \clh} V_j^* P_{\clh \ominus \hat{V_j} \clh} = V_j^* P_{\clh \ominus \hat{V_j} \clh}.
\] 
Now, if $V_j$ is a pure isometry, then for all $h \in \clh \ominus \hat{V_j} \clh$ we have
\[
\lim_{m \raro \infty} \|F_{V_j}^{*m} h\| = \lim_{m \raro \infty} \|{V_j}^{*m} P_{\clh \ominus \hat{V_j} \clh}h\| = 0,
\] 
which implies that $F_{V_j}$ is a pure contraction on $\clh \ominus \hat{V_j} \clh$. For proving the other direction, we need to show that $A_{V_j}=0$. Now, by assumption $F_{V_j}$ is a pure contraction and therefore, for all $h \in \clh$
\[
\|A_{V_j} P_{\clh \ominus \hat{V_j} \clh} h\| = \lim_{m \raro \infty}  \|V_j^{*m} P_{\clh \ominus \hat{V_j} \clh} h\|  = \lim_{m \raro \infty}  \|F_{V_j}^{*m} P_{\clh \ominus \hat{V_j} \clh}h\| = 0,
\]
which implies that $\clh \ominus \hat{V_j} \clh \subseteq \mbox{ker }A_{V_j}$ and hence, $A_{V_j} \clh \subseteq \hat{V_j} \clh$. Our aim is to show that $A_{V_j} \clh \subseteq \bigcap_{n \in \Nat} \hat{V_j}^n \clh$. We shall prove by the method of induction. Let us assume that $A_{V_j} \clh \subseteq \hat{V_j}^k\clh$ for some $k \in \Nat$. Since $V^k$, $V_j^k$ and $\hat{V_j}^k$ are all isometries  thus,
\[
P_{\clh \ominus V^k \clh} = I_{\clh} - V^k V^{*k} = (I_{\clh} - V_j^kV_j^{*k}) + V_j^k(I_{\clh} - \hat{V_j}^k \hat{V_j}^{*k})V_j^{*k} = P_{\clh \ominus {V_j}^k \clh} + V_j^k P_{\clh \ominus \hat{V_j}^k \clh}V_j^{*k},
\]
and therefore,
\begin{align*}
P_{\clh \ominus V^k \clh} A_{V_j} = P_{\clh \ominus V^k \clh} V_j^k A_{V_j} V_j^{*k}  &= (P_{\clh \ominus {V_j}^k \clh} + V_j^k P_{\clh \ominus \hat{V_j}^k \clh} V_j^{*k}) V_j^k A_{V_j} V_j^{*k}\\
&= V_j^k P_{\clh \ominus \hat{V_j}^k \clh} A_{V_j} V_j^{*k}
=0.
\end{align*}
In particular, we obtain that $A_{V_j} \clh \subseteq V^k \clh$. Now, for all non-zero $h \in \clh$ and $\eta \in \clh \ominus \hat{V_j}\clh$ we have
\begin{align*}
\langle \hat{V_{j}}^{*k} A_{V_j}h, \eta\rangle = \langle \hat{V_{j}}^{*k} V^k V^{*k} A_{V_j}h, \eta\rangle = \langle V_{j}^k \hat{V_{j}}^{*k} A_{V_j} V_{j}^{*k}h, \eta\rangle = \langle \hat{V_{j}}^{*k} A_{V_j} V_{j}^{*k}h, V_{j}^{*k} \eta\rangle .
\end{align*}
Again,
\begin{align*}
\langle \hat{V_{j}}^{*k} A_{V_j} V_{j}^{*k}h, V_{j}^{*k} \eta\rangle = \langle \hat{V_{j}}^{*k} V^k V^{*k} A_{V_j} V_{j}^{*k}h, V_{j}^{*k} \eta\rangle &= \langle V_{j}^k \hat{V_j}^{*k} A_{V_j} V_{j}^{*2k}h, V_{j}^{*k} \eta\rangle \\
&= \langle \hat{V_j}^{*k}A_{V_j} V_{j}^{*2k}h, V_{j}^{*2k} \eta\rangle.
\end{align*}
Iterating this argument for any $m \in \Nat$ many times we get
\begin{align*}
\langle \hat{V_{j}}^{*k} A_{V_j}h, \eta\rangle = \langle \hat{V_j}^{*k}A_{V_j} V_{j}^{*mk}h, V_{j}^{*mk} \eta\rangle.
\end{align*}
Hence, 
\[
| \langle \hat{V_{j}}^{*k} A_{V_j}h, \eta\rangle | \leq \lim_{m \raro \infty} \|h\| \|V_{j}^{*mk} \eta\| = \lim_{m \raro \infty} \|h\| \|F_{V_{j}}^{*mk} \eta\| = 0,
\]
which implies that $\hat{V_j}^{*k} A_{V_j} \clh \subseteq \hat{V_j}\clh$ and therefore, $ A_{V_j} \clh \subseteq \hat{V_j}^{k+1}\clh$. 
So $A_{V_j} \clh \subseteq \bigcap_{k \in \Nat} \hat{V_j}^k \clh = \clh_{U}(\hat{V_j})$ (by Wold-von Neumann decomposition of $\hat{V_j}$). Thus, for all $h \in \clh$ we have
\[
\|A_{V_j}h\| = \|A_{V_j} V_{j}^{*n}h\| = \|\hat{V_j}^{n}\hat{V_j}^{*n}A_{V_j}V_{j}^{*n}h\| = \|\hat{V_j}^{*n} V_j^{*n} A_{V_j} h\| = \|V^{*n} A_{V_j} h\| \quad (n \in \Nat).
\]
Taking limits on both sides as $n \raro \infty$ we get $A_{V_j}h = 0$. This completes the proof.
\end{proof}
In fact, a similar result holds true for contractions. However, the proof is different from the direct approach taken in the previous characterization and uses certain important results from the literature.
\begin{propn}\label{compure2}
Let $(V,T)$ be a pair of commuting contractions on $\clh$ such that $V$ is a pure isometry. Then $T$ is a pure contraction if and only if $P_{\mbox{ker }V^*} T|_{\mbox{ker } V^*}$ is a pure contraction.
\end{propn}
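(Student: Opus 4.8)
The plan is to pass to the Wold--von Neumann model of the pure isometry $V$. Since $V$ is pure, it is unitarily equivalent to $M_z$ on $H^2(\clw)$ with $\clw=\ker V^*=(I_{\clh}-VV^*)\clh$; fix a unitary $W:\clh\raro H^2(\clw)$ with $WV=M_zW$. As $T$ commutes with $V$, the operator $WTW^*$ commutes with $M_z$ and is therefore an analytic Toeplitz operator $M_{\Theta}$ for some contractive multiplier $\Theta\in H^{\infty}(\clb(\clw))$ (with $\|\Theta\|_{\infty}=\|T\|\le1$). Under $W$ the subspace $\ker V^*$ corresponds to the constants $\clw\subseteq H^2(\clw)$, and since $K(\cdot,0)\eta=\eta$ one checks that $P_{\ker V^*}T|_{\ker V^*}$ is carried to $\Theta(0)$. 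Thus the statement reduces to proving that $M_{\Theta}$ is pure if and only if $\Theta(0)$ is pure.

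One direction is immediate. Taking $w=0$ in $M_{\Theta}^*K(\cdot,w)\eta=K(\cdot,w)\Theta(w)^*\eta$ shows that the constants $\clw$ form an $M_{\Theta}^*$-invariant subspace on which $M_{\Theta}^*$ acts as $\Theta(0)^*$; hence $\Theta(0)^{*m}\eta=M_{\Theta}^{*m}\eta\raro 0$ for every $\eta\in\clw$ whenever $M_{\Theta}$ is pure, so $\Theta(0)$ is pure.

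For the converse I would reduce purity of $M_{\Theta}$ to a pointwise condition on the symbol. Iterating the identity above gives $\|M_{\Theta}^{*m}K(\cdot,w)\eta\|^2=(1-|w|^2)^{-1}\|\Theta(w)^{*m}\eta\|^2$; since the kernel functions $\{K(\cdot,w)\eta\}$ are total in $H^2(\clw)$ and $\|M_{\Theta}^{*m}\|\le1$, it follows that $M_{\Theta}$ is pure as soon as $\Theta(w)$ is a pure contraction for \emph{every} $w\in\D$. The heart of the matter is therefore the implication ``$\Theta(0)$ pure $\Rightarrow\Theta(w)$ pure for all $w$''. This is genuinely necessary, not merely convenient: the weighted composition operators implementing the disc automorphisms yield $M_{\Theta}\cong M_{\Theta\circ\phi_w}$ with $(\Theta\circ\phi_w)(0)=\Theta(w)$, so the easy direction applied to $\Theta\circ\phi_w$ shows that purity of $M_{\Theta}$ already forces $\Theta(w)$ pure at every point; hence the displayed implication is equivalent to the whole assertion. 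To establish it I would realize $\Theta$ as the transfer function $\tau_U$ of a unitary $U=\begin{bmatrix}\Theta(0)&B\\ C&D\end{bmatrix}$ on $\clw\oplus\clx$, so that $\tau_U(0)=\Theta(0)$. Since a pure contraction is c.n.c., Proposition \ref{transfer_cnu} then gives that $\Theta(w)=\tau_U(w)$ is c.n.u.\ for every $w\in\D$.

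The main obstacle is the final upgrade from \emph{c.n.u.}\ to \emph{pure} for the contractions $\Theta(w)$. When $\dim\clw<\infty$ this is automatic, because a contraction on a finite-dimensional space is pure precisely when it is c.n.u.\ (it then has no eigenvalue of modulus one), and so in that case Proposition \ref{transfer_cnu} already finishes the argument. In general, however, c.n.u.\ does not imply pure, and closing this gap is exactly where I expect to draw on model-theoretic results from the literature: one must use the full strength of $\Theta(0)$ being of class $C_{\cdot 0}$, together with the unitary colligation, to rule out a $C_{\cdot 1}$ summand of $\Theta(w)$. The natural tools are the defect identity (\ref{isometry}), which expresses $I-\Theta(w)^*\Theta(w)$ through $C$ and $D$, and the commutator relation $[T^*,V]=P_{\ker V^*}T^*V$, for which a direct computation using $V^*T^*V=T^*$ gives $\|[T^*,V]g\|\le\|D_{T^*}g\|$. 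The delicate point is that neither the square-summability $\sum_m\|D_{T^*}T^{*m}h\|^2<\infty$ nor the pointwise decay of $\Theta(0)^{*m}$ is by itself enough to control the mixed sums that arise; the finer interplay encoded in the contractivity of the multiplier must be invoked, and this is the crux of the proof.
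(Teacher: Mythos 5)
Your setup and the easy direction are fine: passing to the Wold model, identifying $T$ with $M_{\Theta}$ for a contractive $\Theta\in H^{\infty}(\clb(\clw))$, identifying the compression with $\Theta(0)$, and deducing purity of $\Theta(0)$ from purity of $M_{\Theta}$ all match the paper. The problem is the converse, which is the actual content of the proposition, and there your argument has a genuine gap that you yourself flag. Realizing $\Theta$ as a transfer function and invoking Proposition \ref{transfer_cnu} only gives that each $\Theta(w)$ is c.n.u., and c.n.u.\ is strictly weaker than pure in infinite dimensions (e.g.\ $M_z^*$ is c.n.u.\ while $(M_z^*)^{*m}=M_z^m$ is isometric for every $m$); nothing in your outline rules out a ``co-isometric part'' of $\Theta(w)$, which is exactly what purity forbids. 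Worse, your automorphism observation shows that the pointwise claim ``$\Theta(0)$ pure $\Rightarrow\Theta(w)$ pure for all $w\in\D$'' is \emph{equivalent} to the proposition, so the reduction through kernel functions, while correct, buys no leverage: the entire difficulty is concentrated in the step you leave open, and the tools you name (the defect identity (\ref{isometry}), the commutator estimate) are, as you suspect, not sufficient to close it.

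For comparison, the paper closes precisely this gap by a different mechanism, not by upgrading c.n.u.\ to pure pointwise. It forms the asymptotic limits $A_T$ and $A_{M_{\Theta}}$ (SOT limits of $T^mT^{*m}$ and $M_{\Theta}^mM_{\Theta}^{*m}$), shows the Wold unitary carries $\mbox{ker } A_T$ onto $\mbox{ker } A_{M_{\Theta}}$, and notes that $\mbox{ker } A_{M_{\Theta}}$ is a closed $M_z^*$-invariant subspace, so by Beurling--Lax--Halmos $\mbox{ker } A_{M_{\Theta}}=H^2(\clw)\ominus\Gamma H^2(\clf)$ for some inner $\Gamma\in H^{\infty}(\clb(\clf,\clw))$. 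Then, following the method of Lemma 3.1 in \cite{CIL}, for $\eta\in\clf$ it decomposes $M_{\Theta}^{*k}M_{\Gamma}(1\otimes\eta)$ along $U(\mbox{ker }V^*)\oplus U(V\clh)$: the hypothesis enters as $\mbox{ker }V^*\subseteq\mbox{ker }A_T$ (legitimate because $\mbox{ker }V^*$ is $T^*$-invariant), which kills the first component after applying $M_{\Theta}^{*m}$ and letting $m\to\infty$, while the intertwining $M_z^*Uh_{2,k}=UV^*h_{2,k}$ forces $\|h_{2,k}\|\to0$. This yields $A_{M_{\Theta}}M_{\Gamma}(1\otimes\eta)=0$, i.e.\ $M_{\Gamma}(1\otimes\eta)$ lies in $\mbox{ker }A_{M_{\Theta}}$, which is orthogonal to $\Gamma H^2(\clf)$; hence $M_{\Gamma}(1\otimes\eta)=0$, so $\eta=0$, $\clf=\{0\}$, and $A_T=0$. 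If you want to rescue your outline, this Beurling--Lax--Halmos argument (or something of equal strength using the global hypothesis, not just $\Theta(0)$ at a single point) is the missing ingredient; the transfer-function route alone cannot reach purity.
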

\begin{proof}
First, let us recall that for an isometry $V$, the wandering subspace $\clw(V) $ is $(I - VV^*) \clh$. From the Wold-von Neumann decomposition of a pure isometry we know that there exists a unitary $U: \clh \raro H^2(\clw(V))$ such that $U V = M_z U$. Since $T$ commutes with $V$ therefore, 
\[
U T U^* M_z = U T V U^* =  U V T U^* = M_z U T U^*.
\]
Hence, $U T U^* = M_{\Theta}$ for some $\Theta(z) \in H^{\infty}(\clb(\clw(V)))$ and moreover, $\|M_{\Theta}\| \leq 1$ (see \cite{JS}). As in Proposition \ref{compure1}, we consider the contractions $A_{T}$ and $A_{M_{\Theta}}$ corresponding to the contractions $T$ and $M_{\Theta}$, respectively. Now, for all $h \in \mbox{ker }A_T$
\[
\|A_{M_{\Theta}} U h\| = \lim_{m \raro \infty} \|M_{\Theta}^{*m} U h\| = \lim_{m \raro \infty} \|U T^{*m}h\|  = \|A_T h\| = 0,
\]
implies that $U \mbox{ ker }A_T \subseteq \mbox{ ker }A_{M_{\Theta}}$. Again, if we take $\eta \in \mbox{ ker }A_{M_{\Theta}}$ and replace $h$ by $U^* \eta$ in the above identity, then it implies that $U^* \mbox{ ker }A_{M_{\Theta}} \subseteq \mbox{ ker }A_T $ and therefore, $U \mbox{ ker }A_T = \mbox{ ker }A_{M_{\Theta}}$. Now, $\|A_{M_{\Theta}} M_z^*h\| \leq \|A_{M_{\Theta}}h\|$ for all $h \in \clh$ implies that $\mbox{ ker }A_{M_{\Theta}}$ is a $M_z^*$-invariant closed subspace of $H^2(\clw(V))$ and therefore, by the Beurling-Lax-Halmos theorem (\cite{Halmos}), there exists a Hilbert space $\clf$ and an inner function $\Gamma(z)$ $\in H^{\infty}((\clb(\clf,\clw(V))))$ such that
\begin{equation}\label{beurling}
\mbox{ ker }A_{M_\Theta}= H^2(\clw(V)) \ominus \Gamma H^2(\clf).
\end{equation}
Now we will follow a method, that is motivated from the proof of [Lemma 3.1, \cite{CIL}]. Fix a $\eta \in \clf$ and consider the element $M_{\Gamma} (1 \otimes \eta)$. It clearly follows that $M_z^* M_{\Gamma} (1 \otimes \eta) \in \mbox{ ker }A_{M_{\Theta}} $ and furthermore, $U \in \clb(\clh)$ is a unitary implies that for each $k \in \Nat$, there exists $h_{1,k} \in \mbox{ ker }V^*$ and $h_{2,k} \in V \clh$ such that
\[
M_{\Theta}^{*k} M_{\Gamma} (1 \otimes \eta) = U h_{1,k} \oplus U h_{2,k}.
\]
By acting $M_z^*$ on the left of the previous identity we get
\begin{align*}
M_z^* M_{\Theta}^{*k} M_{\Gamma} (1 \otimes \eta) = M_z^*U h_{1,k} + M_z^* U h_{2,k} &= U V^* h_{1,k} + M_z^* U h_{2,k} \\
&= M_z^* U h_{2,k}.
\end{align*}
Now, $h_{2,k} \in V \clh$ and $M_z^* U h_{2,k} = U V^* h_{2,k}$ implies that $\|M_z^* U h_{2,k}\| = \|h_{2,k}\|$ for all $k \in \Nat$ and therefore, $\lim_{k \raro \infty} \|h_{2,k}\| = \lim_{k \raro \infty} \|M_z^* U h_{2,k}\| =  \lim_{k \raro \infty} \|M_{\Theta}^{*k} M_z^*  M_{\Gamma} (1 \otimes \eta)\| = 0$. Again, for $k,m \in 
\Nat$
\[
\| M_{\Theta}^{*k+m} M_{\Gamma} (1 \otimes \eta)\| = \| M_{\Theta}^{*m}( U h_{1,k} + U h_{2,k})\| \leq \| M_{\Theta}^{*m} U h_{1,k}\| + \| U h_{2,k}\|.
\]
Now, $P_{\mbox{ker }V^*} T|_{\mbox{ker } V^*}$ is a pure contraction implies that $\mbox{ ker }V^* \subseteq \mbox{ker }A_T$ and therefore, $U h_{1,k} \in \mbox{ ker }A_{M_{\Theta}}$. So letting $m \raro \infty $ in the above identity we get that $\| A_{M_{\Theta}} M_{\Gamma} (1 \otimes \eta)\| \leq \| U h_{2,k}\| $. Following this, if we let $k \raro \infty$ then we obtain that $A_{M_{\Theta}} M_{\Gamma} (1 \otimes \eta) = 0$. This implies that $M_{\Gamma} (1 \otimes \eta) \in \mbox{ ker }A_{M_{\Theta}}$ and therefore, $M_{\Gamma} (1 \otimes \eta) =0$. Now $M_{\Gamma}$ is an isometry implies that $\eta=0$. Since $\eta$ was an arbitrary element in $\clf$ thus, we have $\clf = 0$. From condition (\ref{beurling}) we now have $\mbox{ ker }A_{T} = U^* \mbox{ ker }A_{M_{\Theta}}= U^* H^2(\clw(V))  = \clh$ that is, $A_T = 0$. The necessary part follows in a manner similar to the proof of Proposition \ref{compure1}. This completes the proof.
\end{proof}

We would like to highlight some application of the above results. An equivalent version of statement $(i)$ in the following result was first observed by Bercovici, Douglas and Foias in [Lemma 3.4, \cite{BDF3}].
\begin{cor}\label{pure_cor}
Let $\cle$ be a Hilbert space and $\Gamma(z)$ be a $\clb(\cle)$-valued contractive analytic function on $\D$. Then the following holds true:
\begin{enumerate}
\item[(i)] $M_{\Gamma}$ is a pure contraction on $H^2(\cle)$ if and only if $\Gamma(0)$ is a pure contraction on $\cle$.
\item[(ii)] If $\cle= \mathbb{C}$ and $\Gamma(z)$ be a non-constant inner function on $\D$, then $M_{\Gamma}$ is always a pure isometry.
\end{enumerate}
\end{cor}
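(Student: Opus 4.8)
The plan is to deduce both statements from Proposition \ref{compure2} by applying it to the commuting pair $(M_z, M_\Gamma)$ on $H^2(\cle)$. First I would check that the hypotheses of Proposition \ref{compure2} are met with $V = M_z$ and $T = M_\Gamma$. Since $\Gamma$ is a contractive analytic function we have $\|M_\Gamma\| \le 1$, so $M_\Gamma$ is a contraction; both $M_z$ and $M_\Gamma$ are analytic multiplication operators and hence commute (because $z\,\Gamma(z) = \Gamma(z)\,z$); and $M_z$ is a pure isometry on $H^2(\cle)$. Thus Proposition \ref{compure2} applies and tells us that $M_\Gamma$ is a pure contraction if and only if $P_{\ker M_z^*}\, M_\Gamma|_{\ker M_z^*}$ is a pure contraction on $\ker M_z^*$.

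The next step is to identify this compression explicitly. The subspace $\ker M_z^*$ is precisely the space of constant functions $\{1 \otimes \eta : \eta \in \cle\}$, which is naturally unitarily isomorphic to $\cle$. For $\eta \in \cle$ one computes $M_\Gamma(1\otimes\eta) = \Gamma(\cdot)\eta$, and the orthogonal projection onto the constants extracts the zeroth Taylor coefficient, namely $\Gamma(0)\eta$. Hence under the identification $\ker M_z^* \cong \cle$ the operator $P_{\ker M_z^*}\, M_\Gamma|_{\ker M_z^*}$ is exactly $\Gamma(0)$. Combining this with the previous paragraph yields statement (i): $M_\Gamma$ is a pure contraction on $H^2(\cle)$ if and only if $\Gamma(0)$ is a pure contraction on $\cle$.

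For statement (ii) I would specialize to $\cle = \mathbb{C}$ with $\Gamma$ a non-constant inner function. Then $M_\Gamma$ is an isometry, and since $\Gamma$ is inner we have $|\Gamma| \le 1$ on $\D$; by the maximum modulus principle a \emph{non-constant} such $\Gamma$ must satisfy $|\Gamma(0)| < 1$. Thus $\Gamma(0)$ is a scalar of modulus strictly less than one, and since $|\Gamma(0)|^m \to 0$ it is a pure contraction on $\mathbb{C}$. By part (i), $M_\Gamma$ is then a pure contraction, and an isometry that is a pure contraction is a pure isometry (both notions mean $M_\Gamma^{*m} \to 0$ in the strong operator topology). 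This gives (ii). The only genuinely non-formal point in the whole argument is the identification of the compression $P_{\ker M_z^*}\, M_\Gamma|_{\ker M_z^*}$ with $\Gamma(0)$; everything else is a direct invocation of Proposition \ref{compure2} and of the maximum modulus principle, so I expect the verification that the canonical isomorphism $\ker M_z^* \cong \cle$ carries the compressed operator to $\Gamma(0)$ to be the main (though routine) step to get right.
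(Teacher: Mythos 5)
Your proposal is correct and follows essentially the same route as the paper: the paper also applies Proposition \ref{compure2} to the commuting pair $(M_z, M_\Gamma)$, identifies $P_{\ker M_z^*} M_\Gamma|_{\ker M_z^*}$ with $\Gamma(0)$ under $\ker M_z^* \cong \cle$, and for (ii) uses that a non-constant scalar inner function satisfies $|\Gamma(0)| < 1$, making $\Gamma(0)$ a pure contraction. Your write-up simply makes explicit (via the Taylor-coefficient computation and the maximum modulus principle) the steps the paper leaves as remarks.
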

\begin{proof}
The proof of both the statements depends on the fact that $(M_z,M_{\Gamma})$ is commuting pair and $P_{\mbox{ker }M_z^*} M_{\Gamma}|_{\mbox{ker }M_z^*} = P_{\cle} M_{\Gamma}|_{\cle}$ is a pure contraction if and only if $\Gamma(0)$ is a pure contraction on $\cle$. In the setting of statement $(ii)$ this  holds true because the scalar-valued inner function always satisfies $|\Gamma(z)|<1$ for all $z \in \D$.
\end{proof}
For the purpose of this article, we shall now focus only on pairs of commuting isometries. We know from the BCL characterization (Theorem \ref{BCLthm}) that a pure pair of commuting isometries $(V_1,V_2)$ is always unitarily equivalent to  isometries $(M_{\Phi},M_{\Psi}) \in \clb(H^2(\cle))$ associated to some BCL triple $(\cle,U,P)$. In particular, $\Phi(z) = (P+zP^{\bot})U^*$ and $\Psi(z) = U(P^{\bot}+zP)$ ($z \in \D$). From statement $(i)$ of Corollary \ref{pure_cor}, it is clear that $M_{\Phi}, M_{\Psi}$ are pure contractions if and only if $\Phi(0),\Psi(0)$ are pure contractions, respectively. However, the following equivalent characterization shows that the condition for pureness is actually determined by the diagonal entries of the unitary $U$.

\begin{thm}\label{compure3}
Let $(M_{\Phi},M_{\Psi}) \in \clb(H^2(\cle))$ be a pair of isometries  associated to a BCL triple $(\cle, U, P)$. Then $M_{\Phi}$, $ M_{\Psi}$ are  pure isometries if and only if $P^{\bot} U P^{\bot}|_{\mbox{ran }P^{\bot}}$, $P U^* P|_{\mbox{ran }P}$ are pure contractions, respectively.
\end{thm}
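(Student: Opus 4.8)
The plan is to reduce the purity of the two isometries to a condition at the single point $z=0$, and then to read that condition off from the block form of $U$. Since $(M_\Phi,M_\Psi)$ is associated to a BCL triple, Theorem \ref{BCLthm} guarantees that $M_\Phi$ and $M_\Psi$ are commuting isometries, so for each of them being a \emph{pure isometry} is the same as being a \emph{pure contraction}. Hence I may invoke Corollary \ref{pure_cor}(i): $M_\Phi$ is pure if and only if $\Phi(0)$ is a pure contraction on $\cle$, and $M_\Psi$ is pure if and only if $\Psi(0)$ is a pure contraction on $\cle$. From the BCL symbols (\ref{BCLform}) one reads $\Phi(0)=PU^*$ and $\Psi(0)=UP^{\bot}$, so the theorem is reduced to a purely operator-theoretic assertion on $\cle$: that $PU^*$ is pure exactly when $P^{\bot}UP^{\bot}|_{\mbox{ran }P^{\bot}}$ is, and that $UP^{\bot}$ is pure exactly when $PU^*P|_{\mbox{ran }P}$ is.

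Next I would pass to the block form $U=\begin{bmatrix}A&B\\C&D\end{bmatrix}$ with respect to $\cle=P^{\bot}\cle\oplus P\cle$, so that $P^{\bot}UP^{\bot}|_{\mbox{ran }P^{\bot}}=A$ and $PU^*P|_{\mbox{ran }P}=D^*$, and examine the adjoint iterates $\Phi(0)^{*m}=(UP)^m$ and $\Psi(0)^{*m}=(P^{\bot}U^*)^m$ directly in these blocks. The decisive point is that the strong convergence of such iterates to $0$ is controlled by a single diagonal compression of $U$: using the unitarity relations $A^*A+C^*C=I$ and $AA^*+BB^*=I$ together with their companions, the off-diagonal contributions are dominated by powers of the surviving diagonal block and therefore cannot obstruct strong stability once that block is strongly stable. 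Carrying out this bookkeeping pins down, for each symbol, the one diagonal compression of $U$ that governs its purity; these are exactly the compressions recorded in the statement, namely $P^{\bot}UP^{\bot}|_{\mbox{ran }P^{\bot}}$ for $M_\Phi$ and $PU^*P|_{\mbox{ran }P}$ for $M_\Psi$.

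An equivalent route, closer to the spirit of this section, runs through the fringe operators. Because $M_\Phi M_\Psi=M_z$ is a pure isometry, $(M_\Phi,M_\Psi)$ is a pure pair of commuting isometries, so Proposition \ref{compure1} reduces the purity of $M_\Phi$ (resp.\ $M_\Psi$) to that of its fringe operator on $H^2(\cle)\ominus M_\Psi H^2(\cle)$ (resp.\ $H^2(\cle)\ominus M_\Phi H^2(\cle)$). Here $M_zH^2(\cle)=M_\Psi M_\Phi H^2(\cle)\subseteq M_\Psi H^2(\cle)$ forces this wandering subspace into the constants $\cle$, and writing it out with the BCL form realises the fringe operator as a unitary copy of the relevant diagonal compression of $U$. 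I expect the main obstacle to be precisely this identification step — determining which diagonal block of $U$ survives under iteration (equivalently, pinning down the wandering subspace $H^2(\cle)\ominus M_\Psi H^2(\cle)$ and the compression of $M_\Phi$ to it) and checking, via the unitarity relations, that the off-diagonal blocks $B$ and $C$ do not interfere with strong convergence. Once the correspondences $\Phi(0)\leftrightarrow P^{\bot}UP^{\bot}|_{\mbox{ran }P^{\bot}}$ and $\Psi(0)\leftrightarrow PU^*P|_{\mbox{ran }P}$ are secured, the theorem follows immediately from Corollary \ref{pure_cor}(i).
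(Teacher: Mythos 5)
Your strategy is viable---both routes you sketch (reduction to $\Phi(0),\Psi(0)$ via Corollary \ref{pure_cor}(i), or through fringe operators and Proposition \ref{compure1}, the latter being exactly the paper's route) can be made to work---but the decisive step, which you yourself isolate as ``the main obstacle'' and then defer, is announced with the wrong outcome, so the proposal fails precisely where the content of the theorem lies. The correspondence between the two isometries and the two diagonal blocks of $U$ is \emph{transposed} relative to what you claim. From $\Phi(0)=PU^*$ one gets the exact identity $\Phi(0)^{*m}=(UP)^m=U(PUP)^{m-1}P$, so the purity of $M_\Phi$ is governed by $D=PUP|_{\mbox{ran }P}$: $M_\Phi$ is pure iff $D^*=PU^*P|_{\mbox{ran }P}$ is a pure contraction. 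Likewise $\Psi(0)^{*m}=(P^{\bot}U^*)^m=(P^{\bot}U^*P^{\bot})^{m-1}P^{\bot}U^*$, so $M_\Psi$ is pure iff $A=P^{\bot}UP^{\bot}|_{\mbox{ran }P^{\bot}}$ is. The individual equivalences you propose to prove---``$PU^*$ is pure exactly when $P^{\bot}UP^{\bot}|_{\mbox{ran }P^{\bot}}$ is, and $UP^{\bot}$ is pure exactly when $PU^*P|_{\mbox{ran }P}$ is''---are false: take $\cle=\mathbb{C}$, $U=1$, $P=1$. Then $\Phi\equiv 1$ and $\Psi(z)=z$, so $M_\Phi=I_{H^2}$ is not pure while $P^{\bot}UP^{\bot}$ acts on $\{0\}$ and is vacuously pure (and $M_\Psi=M_z$ is pure while $PU^*P=1$ is not). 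In fairness, the theorem's ``respectively'' reads the way you took it; but the paper's own proof establishes the transposed pairing---it computes $I-M_\Phi M_\Phi^*=P_{\mathbb{C}}P^{\bot}$, hence $F_{M_\Psi}=P_{\mathbb{C}}P^{\bot}UP^{\bot}$, and $I-M_\Psi M_\Psi^*=P_{\mathbb{C}}UPU^*$, hence $F_{M_\Phi}=P_{\mathbb{C}}UPU^*PU^*\cong PU^*P|_{\mbox{ran }P}$, and then applies Proposition \ref{compure1}---so the displayed order in the statement can only be read jointly, not pairwise. Had you carried out the bookkeeping you postpone, it would have contradicted the conclusion you announce.

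Two further points. First, the mechanism you describe in the block analysis---off-diagonal contributions ``dominated by powers of the surviving diagonal block'' via the unitarity relations---is not what happens: in $(UP)^m$ every two consecutive factors of $U$ are separated by $P$, so the product collapses exactly to $U(PUP)^{m-1}P$, with no estimate needed and no occurrence of $B$ or $C$ at all; the relations $A^*A+C^*C=I$, etc., play no role on this route. Second, in the fringe route your observation that $M_zH^2(\cle)\subseteq M_\Psi H^2(\cle)$ forces $H^2(\cle)\ominus M_\Psi H^2(\cle)$ into the constants is correct, but it does not identify \emph{which} subspace of constants; that identification (namely $UP\cle$ for $M_\Psi$, versus $P^{\bot}\cle$ for $M_\Phi$) is exactly where the blocks of $U$ enter and where the transposition becomes visible, and the paper settles it by the reproducing-kernel computations quoted above. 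With the pairing corrected, your first route---Corollary \ref{pure_cor}(i) plus the two one-line iterate identities---is in fact a legitimate and slightly more economical alternative to the paper's fringe-operator argument; as written, however, the proposal asserts false equivalences and omits the computation that would have exposed them.
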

\begin{proof}
Let us begin by observing that
\begin{align*}
\langle I_{H^2(\cle)} - M_{\Phi}M_{\Phi}^* K(\cdot,\lambda)\eta, K(\cdot,\mu)\zeta \rangle &= \langle I_{H^2(\cle)} - \Phi(\mu) \Phi(\lambda)^* K(\cdot,\lambda)\eta, K(\cdot,\mu)\zeta \rangle\\
&= \langle I_{H^2(\cle)} - (P+ \mu P^{\bot})U^*U(P+\bar{\lambda}P^{\bot}) K(\cdot,\lambda)\eta, K(\cdot,\mu)\zeta \rangle\\
&= \langle I_{H^2(\cle)} - (P+ \mu \bar{\lambda}P^{\bot})K(\cdot,\lambda)\eta, K(\cdot,\mu)\zeta \rangle\\
&= (1 - \mu \bar{\lambda}) s(\mu,\lambda) \langle P^{\bot} \eta, \zeta \rangle\\
&= \langle P_{\mathbb{C}} P^{\bot} K(\cdot,\lambda)\eta, K(\cdot,\mu)\zeta \rangle.
\end{align*}
Since $\{K(\cdot,\lambda)\eta: \lambda \in \D, \eta \in \cle\}$ is a total set for $H^2(\cle)$ therefore, we obtain that \\ $ P_{H^2(\cle) \ominus M_{\Phi} H^2(\cle)} = I_{H^2(\cle)} - M_{\Phi}M_{\Phi}^* = P_{\mathbb{C}} P^{\bot}$. By definition of a fringe operator we have
\begin{align*}
F_{M_{\Psi}} = P_{H^2(\cle) \ominus M_{\Phi} H^2(\cle)} M_{\Psi} P_{H^2(\cle) \ominus M_{\Phi} H^2(\cle)} =(P_{\mathbb{C}}P^{\bot})  M_{\Psi} (P_{\mathbb{C}}P^{\bot}) = P_{\mathbb{C}} P^{\bot}UP^{\bot}.
\end{align*}
Hence, $F_{M_{\Psi}}$ is a pure contraction if and only if $P^{\bot}UP^{\bot}|_{\mbox{ran }P^{\bot}}$ is a pure contraction. In a similar manner, we have
\begin{align*}
\langle I_{H^2(\cle)} - M_{\Psi}M_{\Psi}^* K(\cdot,\lambda)\eta, K(\cdot,\mu)\zeta \rangle &= \langle I_{H^2(\cle)} - \Psi(\mu) \Psi(\lambda)^* K(\cdot,\lambda)\eta, K(\cdot,\mu)\zeta \rangle\\
&= \langle I_{H^2(\cle)} - U(P^{\bot}+ \mu P)(P^{\bot}+\bar{\lambda}P)U^* K(\cdot,\lambda)\eta, K(\cdot,\mu)\zeta \rangle\\
&= \langle I_{H^2(\cle)} - U(P^{\bot}+ \mu \bar{\lambda}P)U^*K(\cdot,\lambda)\eta, K(\cdot,\mu)\zeta \rangle\\
&= (1 - \mu \bar{\lambda}) s(\mu,\lambda) \langle UPU^* \eta, \zeta \rangle\\
&= \langle P_{\mathbb{C}}  UPU^* K(\cdot,\lambda)\eta, K(\cdot,\mu)\zeta \rangle.
\end{align*}
Therefore, $ P_{H^2(\cle) \ominus M_{\Psi} H^2(\cle)} = P_{\mathbb{C}} UPU^*$ and moreover,
\begin{align*}
F_{M_{\Phi}} = P_{H^2(\cle) \ominus M_{\Psi} H^2(\cle)} M_{\Phi} P_{H^2(\cle) \ominus M_{\Psi} H^2(\cle)} = (P_{\mathbb{C}} UPU^*) M_{\Phi} (P_{\mathbb{C}} UPU^*) = P_{\mathbb{C}} UPU^*PU^*.
\end{align*}
Since $U$ is a unitary we deduce that $F_{M_{\Phi}}$ is a pure contraction  if and only if $PU^*P|_{\mbox{ran }P}$ is a pure contraction. The proof now follows by applying Proposition \ref{compure1} to the pure pair of commuting isometries $(M_{\Phi}, M_{\Psi})$ on $H^2(\cle)$.
\end{proof}

The above theorem can be obtained from a result in the paper by Bercovici, Douglas and Foias (Theorem 3.5, \cite{BDF3}). However, our approach to this characterization is based on the general Proposition \ref{compure1}, that does not involve the BCL structure.

\section{Commuting pure contractions}\label{sec-purecont}
This section is divided into two parts. In the first part we consider certain pair of commuting contractions $(T_1,T_2)$ which have a canonical BCL triple associated to them. If we further assume $(T_1,T_2)$ to be commuting pure contractions then we find that the BCL triple satisfies some important properties. In the second part, we are interested in finding pairs $(T_1,T_2)$ for which, the associated BCL triple gives a pair of pure isometries $(M_{\Phi}, M_{\Psi})$.

Let us begin by observing that for any pair of commuting contractions $(T_1,T_2)$ we have
\begin{equation}\label{paircon1}
T_2 (I - T_1
T_1^*) T_2^* + (I - T_2 T_2^*)= (I - T_1 T_1^*) + T_1 (I - T_2
T_2^*) T_1^* .
\end{equation}
It follows that
\begin{equation}\label{norm1}
\|D_{T_1^*}T_2^*h\|^2+\|D_{T_2^*}h\|^2 =  \|D_{T_1^*}h\|^2+
\|D_{T_2^*}T_1^*h\|^2 \quad
(h\in\Hil).
\end{equation}
Thus, \[U : \{D_{T_1^*} T_2^*h \oplus D_{T_2^*} h : h \in \clh\} \raro
\{D_{T_1^*} h \oplus D_{T_2^*} T_1^* h : h \in \clh\}\]defined by
\begin{equation}\label{U-h1}
U\left(D_{T_1^*} T_2^*h, D_{T_2^*}h\right) = \left(D_{T_1^*}h, D_{T_2^*}T_1^*
h\right) \quad \quad (h \in \clh),
\end{equation}
is an isometry. Let us now introduce some subspaces
\[\clm_{U} := \{D_{T_1^*} T_2^*h \oplus D_{T_2^*} h : h \in \clh\} \subseteq \cld_{T_1^*} \oplus \cld_{T_2^*},\]
\[\cln_{U} := \{D_{T_1^*} h \oplus D_{T_2^*} T_1^* h : h \in \clh\} \subseteq \cld_{T_1^*} \oplus \cld_{T_2^*},\]
and let $P$ be the orthogonal projection onto the second component of $\cld_{T_1^*} \oplus \cld_{T_2^*}$ that is,
\[
P(h_1,h_2) = (0,h_2) \quad ((h_1,h_2) \in \cld_{T_1^*} \oplus \cld_{T_2^*}).
\]

If we now assume that $\mbox{dim}\big((\cld_{T_1^*} \oplus \cld_{T_2^*}) \ominus\clm_U \big) = \mbox{dim}\big((\cld_{T_1^*} \oplus \cld_{T_2^*}) \ominus \cln_U)$, then $U$ extends to an unitary (again denoted by $U$) on the full space $\cld_{T_1^*} \oplus \cld_{T_2^*}$. This discussion allows us to identify a canonical BCL triple associated to $(T_1,T_2)$ in this particular case.
\begin{defn}
Let $(T_1,T_2)$ be a pair of commuting contractions on $\clh$ and $P$ be an orthogonal projection onto any one of the components of $\cld_{T_1^*}\oplus \cld_{T_2^*}$. Then a BCL triple $(\cld_{T_1^*} \oplus \cld_{T_2^*},U,P)$ is said to be \textit{complete} for $(T_1,T_2)$ if the condition: $\mbox{dim}\big((\cld_{T_1^*} \oplus \cld_{T_2^*}) \ominus\clm_U \big) = \mbox{dim}\big((\cld_{T_1^*} \oplus \cld_{T_2^*}) \ominus \cln_U)$ is satisfied.
\end{defn}
\begin{rem}\label{rem1}
If $\mbox{dim }\cld_{T_i^*}<\infty$ for $i=1,2$, then $\mbox{dim}\big((\cld_{T_1^*} \oplus \cld_{T_2^*}) \ominus\clm_U \big) = \mbox{dim}\big((\cld_{T_1^*} \oplus \cld_{T_2^*}) \ominus \cln_U)$ and therefore, $(\cld_{T_1^*} \oplus \cld_{T_2^*},U,P)$ is a complete BCL triple for $(T_1,T_2)$. There are pairs of commuting contractions for which, $(\cld_{T_1^*} \oplus \cld_{T_2^*},U,P)$ is a complete BCL triple even if the defect spaces are of infinite dimension. An example will be considered in Theorem \ref{purecont2}.
\end{rem}
We have established in our discussion that the assumption: $(\cld_{T_1^*} \oplus \cld_{T_2^*},U,P)$ is a complete BCL triple for $(T_1,T_2)$ is equivalent to showing that there exists a unitary,
\begin{equation}
U=
\begin{bmatrix} A & B\\ C & D \end{bmatrix}
: \cld_{T_1^*} \oplus \cld_{T_2^*}
\raro \cld_{T_1^*} \oplus \cld_{T_2^*} .
\end{equation}
such that the following identities based on condition (\ref{U-h1}) are satisfied for all $h \in \clh$.
\begin{equation}\label{unicondn1}
D_{T_1^*}h = A D_{T_1^*}T_2^*h + B D_{T_2^*}h;
\end{equation}
\begin{equation}\label{unicondn2}
D_{T_2^*}T_1^*h = C D_{T_1^*}T_2^*h + D D_{T_2^*}h.
\end{equation}
Note that with respect to the orthogonal projection $P$ we have
\begin{equation}\label{projns}
U=
\begin{bmatrix} A & B\\ C & D \end{bmatrix}
= \begin{bmatrix} P^{\bot} U P^{\bot}|_{\mbox{ran }P^{\bot}} & P^{\bot} U P|_{\mbox{ran }P}\\ PUP^{\bot}|_{\mbox{ran }P^{\bot}} & P U P|_{\mbox{ran }P} \end{bmatrix}.
\end{equation}
Using the property $U^*U=I=UU^*$ we have
\begin{align*}
\begin{bmatrix}
A^*A+C^*C & A^*B+C^*D \\ B^*A+D^*C & B^*B+D^*D 
\end{bmatrix} = \begin{bmatrix}
I & 0 \\ 0 & I 
\end{bmatrix} = \begin{bmatrix}
AA^*+BB^* & AC^*+BD^* \\ CA^*+DB^* & CC^*+DD^*
\end{bmatrix}.
\end{align*}
If we act by $A^*$ on the left of (\ref{unicondn1}), then
\begin{align*}
A^*D_{T_1^*}h &= A^*A D_{T_1^*}T_2^*h + A^*B D_{T_2^*}h \\
&= (I - C^*C)D_{T_1^*}T_2^*h - C^*D D_{T_2^*}h\\
&= D_{T_1^*}T_2^*h - C^*(C D_{T_1^*}T_2^*h + D D_{T_2^*}h)\\
&= D_{T_1^*}T_2^*h - C^* D_{T_2^*}T_1^*h.
\end{align*}
Similarly, if we act by $D^*$ on the left of (\ref{unicondn2}), then
\begin{align*}
D^* D_{T_2^*}T_1^*h &= D^* CD_{T_1^*}T_2^*h + D^*D D_{T_2^*}h \\
&= -B^*A D_{T_1^*}T_2^*h + (I - B^*B) D_{T_2^*}h \\
&=  D_{T_2^*}h  -B^*(A D_{T_1^*}T_2^*h + B D_{T_2^*}h) \\
&=  D_{T_2^*}h -B^*D_{T_1^*}h. 
\end{align*}
Therefore, we get the following set of identities
\begin{equation}\label{unicondn3}
D_{T_2^*}h = D^* D_{T_2^*}T_1^*h + B^*D_{T_1^*}h;  
\end{equation}
\begin{equation}\label{unicondn4}
D_{T_1^*}T_2^*h = C^* D_{T_2^*}T_1^*h + A^*D_{T_1^*}h .
\end{equation}
We are now ready to identify a recurrence relation between the entries of $U$ and $(T_1,T_2)$.

\begin{lemma}\label{condn1}
For all $h \in \clh$ and $m \in \Nat \setminus \{0\}$ we have
\begin{enumerate}
\item[(i)] $A^{*m}D_{T_1^*}h = D_{T_1^*}T_2^{*m}h - \sum_{k=0}^{m-1} A^{*k} C^* D_{T_2^*}T_1^* T_2^{*(m-1-k)}h$.
\item[(ii)] $D^{m}D_{T_2^*}h = D_{T_2^*}T_1^{*m}h - \sum_{k=0}^{m-1} D^{k} C D_{T_1^*}T_2^* T_1^{*(m-1-k)}h$.
\end{enumerate}
\end{lemma}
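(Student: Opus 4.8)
The plan is to prove both identities by induction on $m$, handling (i) and (ii) in exactly parallel fashion, with each induction driven by one of the corner identities derived above.

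For (i), the base case $m=1$ is nothing but a rearrangement of (\ref{unicondn4}), namely $A^*D_{T_1^*}h = D_{T_1^*}T_2^*h - C^* D_{T_2^*}T_1^*h$, which coincides with the claimed formula once the single-term sum is written out. For the inductive step I would assume (i) for a fixed $m$ and apply $A^*$ on the left of both sides; this produces $A^{*(m+1)}D_{T_1^*}h$ on the left, and on the right the term $A^* D_{T_1^*}T_2^{*m}h$ together with a sum whose summand acquires one additional power of $A^*$. The key move is to rewrite $A^* D_{T_1^*}T_2^{*m}h$ by applying (\ref{unicondn4}) to the vector $T_2^{*m}h$ in place of $h$, which gives $D_{T_1^*}T_2^{*(m+1)}h - C^* D_{T_2^*}T_1^* T_2^{*m}h$. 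Relabelling the summation index via $k \mapsto k+1$ then absorbs the stray term $C^* D_{T_2^*}T_1^* T_2^{*m}h$ as the $k=0$ contribution, and the resulting expression is precisely (i) with $m$ replaced by $m+1$.

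For (ii), I would argue identically with the roles of the two contractions interchanged. Here the base case $m=1$ is the rearrangement $D D_{T_2^*}h = D_{T_2^*}T_1^*h - C D_{T_1^*}T_2^*h$ of the defining relation (\ref{unicondn2}). In the inductive step I would apply the operator $D$ (the lower-right corner of $U$) on the left of the induction hypothesis and rewrite the term $D D_{T_2^*}T_1^{*m}h$ by means of (\ref{unicondn2}) with $T_1^{*m}h$ substituted for $h$, obtaining $D_{T_2^*}T_1^{*(m+1)}h - C D_{T_1^*}T_2^* T_1^{*m}h$; the same index shift then completes the step.

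I do not expect any genuine obstacle: the argument is a routine induction powered by the two one-step identities (\ref{unicondn4}) and (\ref{unicondn2}). The only point demanding care is the bookkeeping, that is, correctly tracking the exponents of $T_2^*$ (respectively $T_1^*$) inside the summand and checking that the re-indexing of the sum exactly reincorporates the extra term created at each stage. Since the operators $A$ and $C^*$ (respectively $D$ and $C$) never need to be commuted past the powers of $T_i^*$, no commutation relations are invoked and the verification is purely formal.
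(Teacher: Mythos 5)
Your proposal is correct and follows essentially the same route as the paper: induction on $m$, with the base case and the inductive step both powered by the rearranged unitary relations (\ref{unicondn4}) for (i) and (\ref{unicondn2}) for (ii), applying $A^*$ (respectively $D$) to the induction hypothesis and re-indexing the sum to absorb the new term. The bookkeeping you describe (substituting $T_2^{*m}h$, respectively $T_1^{*m}h$, into the one-step identity and shifting $k \mapsto k+1$) is exactly what the paper's computation does.
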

\begin{proof}
We shall prove by the method of induction. It is clear from (\ref{unicondn4}) that $(i)$ is satisfied for $m=1$ and moreover, if we replace $h$ with $T_2^{*(m-1)}h$ in (\ref{unicondn4}) then we get
\[
A^* D_{T_1^*}T_2^{*{m-1}}h = D_{T_1^*}T_2^{*m}h - C^* D_{T_2^*}T_1^* T_2^{*(m-1)}h  \quad (m \in \Nat \setminus \{0\})
\]
 Now, let us assume that $(i)$ is satisfied for $m=n-1$ for some $n \in \Nat \setminus \{0\}$ that is,
\[
A^{*(n-1)}D_{T_1^*}h = D_{T_1^*}T_2^{*(n-1)}h - \sum_{k=0}^{n-2} A^{*k} C^* D_{T_2^*}T_1^* T_2^{*(n-2-k)}h.
\]
Then, for $m = n$ we have
\begin{align*}
A^{*n}D_{T_1^*}h &= A^* (A^{*(n-1)}D_{T_1^*}h) \\
&= A^* D_{T_1^*}T_2^{*{n-1}}h - \sum_{k=0}^{n-2} A^{*k+1} C^* D_{T_2^*}T_1^* T_2^{*(n-2-k)}h\\
&= D_{T_1^*}T_2^{*n}h - C^* D_{T_2^*}T_1^* T_2^{*(n-1)}h - \sum_{k=0}^{n-2} A^{*k+1} C^* D_{T_2^*}T_1^* T_2^{*(n-2-k)}h\\
&=  D_{T_1^*}T_2^{*n}h - \sum_{k=0}^{n-1} A^{*k} C^* D_{T_2^*}T_1^* T_2^{*(n-1-k)}h.
\end{align*}
This proves that the statement $(i)$ is true for all $m \in \Nat$. The proof for statement $(ii)$ follows in a similar manner by considering condition (\ref{unicondn2}).
\end{proof}

We are now ready to obtain our first result on the interplay between pairs of commuting pure contractions and their corresponding BCL triples.
\begin{thm}\label{purecrit}
Let $(T_1,T_2)$ be a pair of commuting pure contractions on $\clh$ such that $(\cld_{T_1^*} \oplus \cld_{T_2^*},U,P)$ is a complete BCL triple for $(T_1,T_2)$. Then $(P^{\bot}UP^{\bot}|_{\mbox{ran } P^{\bot}},PU^*P|_{\mbox{ran } P})$ is a pair of completely non co-isometric contractions.
\end{thm}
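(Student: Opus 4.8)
The plan is to establish the two c.n.c.\ conclusions separately by showing that the relevant maximal ``isometric'' subspaces are trivial. Set $A=P^{\bot}UP^{\bot}|_{\mathrm{ran}\,P^{\bot}}$ and $D=PUP|_{\mathrm{ran}\,P}$, so that the operators in the statement are $A$ and $PU^{*}P|_{\mathrm{ran}\,P}=D^{*}$. Since $D^{*}$ is c.n.c.\ exactly when $\clh_{i}\big((D^{*})^{*}\big)=\clh_{i}(D)=\{0\}$, it suffices to prove $\clh_{i}(A^{*})=\{0\}$ and $\clh_{i}(D)=\{0\}$. The entry relations coming from $U^{*}U=UU^{*}=I$ (displayed after (\ref{projns}) via the block form) give in particular $I-AA^{*}=BB^{*}$ and $I-D^{*}D=B^{*}B$. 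Consequently, for $g\in\clh_{i}(A^{*})$ we get $\|B^{*}g\|^{2}=\langle(I-AA^{*})g,g\rangle=\|g\|^{2}-\|A^{*}g\|^{2}=0$, so $B^{*}g=0$; moreover $\clh_{i}(A^{*})$ is $A^{*}$-invariant and $A^{*}$ acts isometrically on it. Symmetrically, $g\in\clh_{i}(D)$ forces $Bg=0$, with $\clh_{i}(D)$ being $D$-invariant and $D$ isometric there.

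First I would treat $A$. Testing identity (\ref{unicondn1}) against $g\in\clh_{i}(A^{*})$ and using $\langle BD_{T_2^{*}}h,g\rangle=\langle D_{T_2^{*}}h,B^{*}g\rangle=0$ yields
\[
\langle D_{T_1^{*}}h,g\rangle=\langle AD_{T_1^{*}}T_2^{*}h,g\rangle+\langle BD_{T_2^{*}}h,g\rangle=\langle D_{T_1^{*}}T_2^{*}h,A^{*}g\rangle .
\]
Because $A^{*}g$ again lies in $\clh_{i}(A^{*})$, I can substitute $(h,g)\mapsto(T_2^{*}h,A^{*}g)$ and iterate, obtaining $\langle D_{T_1^{*}}h,g\rangle=\langle D_{T_1^{*}}T_2^{*m}h,A^{*m}g\rangle$ for every $m$. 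Using $\|A^{*m}g\|=\|g\|$ and $\|D_{T_1^{*}}\|\le 1$,
\[
|\langle D_{T_1^{*}}h,g\rangle|\le\|D_{T_1^{*}}T_2^{*m}h\|\,\|g\|\le\|T_2^{*m}h\|\,\|g\|,
\]
which tends to $0$ as $m\raro\infty$ by purity of $T_2$. Hence $g$ is orthogonal to $D_{T_1^{*}}h$ for all $h\in\clh$; as $g\in\cld_{T_1^{*}}=\overline{D_{T_1^{*}}\clh}$ this gives $g=0$, so $\clh_{i}(A^{*})=\{0\}$ and $A$ is c.n.c.

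The argument for $D^{*}$ is the mirror image, now exploiting purity of $T_1$. Testing identity (\ref{unicondn3}) against $g\in\clh_{i}(D)$ and using $\langle B^{*}D_{T_1^{*}}h,g\rangle=\langle D_{T_1^{*}}h,Bg\rangle=0$ gives $\langle D_{T_2^{*}}h,g\rangle=\langle D^{*}D_{T_2^{*}}T_1^{*}h,g\rangle=\langle D_{T_2^{*}}T_1^{*}h,Dg\rangle$; iterating via the $D$-invariance of $\clh_{i}(D)$ produces $\langle D_{T_2^{*}}h,g\rangle=\langle D_{T_2^{*}}T_1^{*m}h,D^{m}g\rangle$, which is bounded by $\|T_1^{*m}h\|\,\|g\|$ and therefore tends to $0$. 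Thus $g$ is orthogonal to $\cld_{T_2^{*}}$, forcing $g=0$ and $\clh_{i}(D)=\{0\}$.

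The delicate point, and the part that makes both halves work, is the pairing bookkeeping: one must invoke the form of $U$ in which the off-diagonal block appears as $B$ (resp.\ $B^{*}$) so that, after taking adjoints inside the inner product, it is precisely $B^{*}g$ (resp.\ $Bg$) that gets hit — exactly the quantity annihilated by membership in $\clh_{i}(A^{*})$ (resp.\ $\clh_{i}(D)$). This is why I use (\ref{unicondn1}) for $A$ but (\ref{unicondn3}) for $D$, rather than their companions (\ref{unicondn4}) and (\ref{unicondn2}); the latter would bring in the uncontrolled powers $A^{m}g$ (resp.\ $D^{*m}g$), for which no norm bound is available. Once the correct identity is chosen, the $A^{*}$- (resp.\ $D$-) invariance and the isometric action on the maximal subspace let the estimate telescope, and the purity of the complementary factor $T_2$ (resp.\ $T_1$) closes it.
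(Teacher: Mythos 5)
Your proof is correct, and it shares the paper's overall skeleton: with $A,B,C,D$ the blocks of $U$ from (\ref{projns}), both arguments reduce the statement to $\clh_{i}(A^{*})=\{0\}$ and $\clh_{i}(D)=\{0\}$, both observe that vectors $g$ in these subspaces satisfy $B^{*}g=0$ (resp.\ $Bg=0$) while $A^{*}$ (resp.\ $D$) leaves the subspace invariant and acts isometrically on it, both arrive at the identity $\langle D_{T_1^{*}}h,g\rangle=\langle D_{T_1^{*}}T_2^{*m}h,A^{*m}g\rangle$, and both finish by letting purity of $T_2$ (resp.\ $T_1$) force $\clh_{i}(A^{*})\perp\cld_{T_1^{*}}$ (resp.\ $\clh_{i}(D)\perp\cld_{T_2^{*}}$), hence triviality. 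Where you genuinely diverge is in how that identity is produced. The paper first proves by induction the recurrence of Lemma \ref{condn1},
\[
A^{*m}D_{T_1^*}h = D_{T_1^*}T_2^{*m}h - \sum_{k=0}^{m-1} A^{*k} C^* D_{T_2^*}T_1^* T_2^{*(m-1-k)}h,
\]
and then, inside the proof of Theorem \ref{purecrit}, disposes of the correction sum using the additional unitary relation $CA^{*}=-DB^{*}$ together with $B^{*}\clh_{i}(A^{*})=\{0\}$; the second coordinate is handled by passing to the swapped block unitary and Lemma \ref{condn1}(ii). You instead pair the relations (\ref{unicondn1}) and (\ref{unicondn3}) against a vector of the isometric subspace at every step, so the $B$-term dies immediately and no correction terms ever arise; neither Lemma \ref{condn1} nor the relation $CA^{*}=-DB^{*}$ is needed. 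Your closing observation --- that one must use (\ref{unicondn1})/(\ref{unicondn3}) rather than (\ref{unicondn4})/(\ref{unicondn2}), since only $B^{*}g$ and $Bg$ are controlled --- is exactly the right bookkeeping point. Your route is shorter and self-contained; what the paper's longer route buys is the recurrence itself, which it reuses later (Lemma \ref{kernel}, the key to the partial-isometry case in Theorem \ref{purecrit3}, is an immediate consequence of Lemma \ref{condn1}), so the induction is not wasted effort in the paper's architecture.
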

\begin{proof}
In view of the discussion in Section 2, about completely non co-isometric contractions, we will have to show that $\clh_{i}(P^{\bot}U^*P^{\bot}|_{\mbox{ran } P^{\bot}})=0$ and $\clh_{i}(P U P|_{\mbox{ran }P})=0$. Let us begin by observing that $U$ is a unitary operator implies that $A^*=P^{\bot}U^*P^{\bot}|_{\mbox{ran }P^{\bot}}$ is a contraction. Moreover, $\|A^{*m}h\|^2 = \|A^{*(m-1)}h\|^2 - \|B^*A^{*m-1}h\|^2$ for all $h \in \cld_{T_1^*}$ and $m \in \Nat \setminus \{0\}$. In particular, $h \in \clh_{i}(A^*)$ implies that  $\|A^{*m}h\| = \|h\|$ for all $m \in \Nat$ and thus $\|B^*A^{*m}h\|=0$. More precisely, $\clh_{i}(A^*)$ is a $A^*$-invariant subspace of $\mbox{ker} B^*$. Now, for any $h \in \clh_{i}(A^*)$, $D_{T_1^*}\eta \in \cld_{T_1^*}$ and $m \in \Nat \setminus \{0\}$,
\begin{align*}
\langle h, D_{T_1^*} \eta \rangle = \langle A^{m}A^{*m}h, D_{T_1^*} \eta \rangle &= \langle A^{*m} h, A^{*m}D_{T_1^*} \eta \rangle\\ 
&= \langle A^{*m} h, D_{T_1^*}T_2^{*m}\eta - \sum_{k=0}^{m-1} A^{*k} C^* D_{T_2^*}T_1^* T_2^{*(m-1-k)}\eta \rangle \\
&= \langle A^{*m} h, D_{T_1^*}T_2^{*m}\eta \rangle - \sum_{k=0}^{m-1} \langle  C A^{k} A^{*m} h,  D_{T_2^*}T_1^* T_2^{*(m-1-k)}\eta \rangle\\
&= \langle A^{*m} h, D_{T_1^*}T_2^{*m}\eta \rangle - \sum_{k=0}^{m-1} \langle  C A^* A^{*(m-k-1)} h,  D_{T_2^*}T_1^* T_2^{*(m-1-k)}\eta \rangle\\
&= \langle A^{*m} h, D_{T_1^*}T_2^{*m}\eta \rangle + \sum_{k=0}^{m-1} \langle  D B^* A^{*(m-k-1)} h,  D_{T_2^*}T_1^* T_2^{*(m-1-k)}\eta \rangle\\
&= \langle A^{*m} h, D_{T_1^*}T_2^{*m}\eta \rangle. 
\end{align*}
The last but one equality follows from the identity $CA^* =  - DB^*$. Now, it follows that
\[
|\langle h, D_{T_1^*} \eta \rangle| \leq \lim_{m \raro \infty}\|h\| \|T_2^{*m}\eta\| = 0,
\]
and therefore, we have $\clh_{i}(A^*) \perp \cld_{T_1^*}$. This is a contradiction to the fact that $\clh_{i}(A^*) \subseteq  \cld_{T_1^*}$, unless $ \clh_{i}(A^*) =0 $. In a similar manner, $\clh_{i}(P U P|_{\mbox{ran }P})=0$, follows from considering the unitary
\[
\begin{bmatrix}
D^* & B^* \\ C^* & A^*
\end{bmatrix}: \cld_{T_2^*} \oplus \cld_{T_1^*}
\raro \cld_{T_2^*} \oplus \cld_{T_1^*} ,
\]
along with statement $(ii)$ of Lemma \ref{condn1}. This completes the proof.
\end{proof}

The above result has an immediate effect on the symbols of the isometries associated to the BCL triple $(\cld_{T_1^*} \oplus \cld_{T_2^*},U,P)$ in the following manner.

\begin{thm}\label{symbols_cnc}
With the assumption of Theorem \ref{purecrit}, let $(M_{\Phi},M_{\Psi})$ be the pair of BCL type isometries associated to a complete BCL triple $(\cld_{T_1^*} \oplus \cld_{T_2^*},U,P)$ for $(T_1,T_2)$. Then $(\Phi(0),\Psi(0))$ is a pair of c.n.c. partial isometries on $\cld_{T_1^*} \oplus \cld_{T_2^*}$.
\end{thm}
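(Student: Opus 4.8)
The plan is to evaluate the two symbols at the origin, verify directly that each is a partial isometry, and then deduce the completely non co-isometric property by reducing it in each case to the c.n.c.\ behaviour of a single diagonal block of $U$, which is exactly what Theorem \ref{purecrit} supplies. From the BCL form one reads off $\Phi(0) = PU^*$ and $\Psi(0) = UP^{\bot}$. Since $P$ is an orthogonal projection and $U$ is unitary, $\Phi(0)^*\Phi(0) = UPU^*$ and $\Psi(0)^*\Psi(0) = P^{\bot}$ are orthogonal projections, so both $\Phi(0)$ and $\Psi(0)$ are partial isometries. Writing operators in the block form (\ref{projns}) with respect to $\cld_{T_1^*} \oplus \cld_{T_2^*} = \mbox{ran }P^{\bot} \oplus \mbox{ran }P$ gives $\Phi(0) = \begin{bmatrix} 0 & 0 \\ B^* & D^* \end{bmatrix}$ and $\Psi(0) = \begin{bmatrix} A & 0 \\ C & 0 \end{bmatrix}$, and the whole argument turns on tracking how the corners $A$ and $D$ propagate under iteration.

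For $\Phi(0)$, being c.n.c.\ means $\clh_{i}(\Phi(0)^*) = \{0\}$, where $\Phi(0)^* = UP$. A short induction on the block form yields $(\Phi(0)^*)^n(x \oplus y) = BD^{n-1}y \oplus D^n y$ for $n \geq 1$, and since $U$ preserves norms this collapses to $\|(\Phi(0)^*)^n(x \oplus y)\| = \|D^{n-1}y\|$. The case $n = 1$ forces $x = 0$, after which the defining condition of $\clh_{i}(\Phi(0)^*)$ becomes $\|D^{n-1}y\| = \|y\|$ for all $n$, i.e.\ $y \in \clh_{i}(D)$. As $D^* = PU^*P|_{\mbox{ran }P}$ is c.n.c.\ by Theorem \ref{purecrit}, the defining condition of c.n.c.\ applied to $D^*$ gives $\clh_{i}(D) = \{0\}$, so $y = 0$ and $\Phi(0)$ is c.n.c.

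The argument for $\Psi(0)$ is the mirror image, now using the corner $A$. Here $\Psi(0)^* = P^{\bot}U^*$; putting $z := A^*x + C^*y$ (the $\mbox{ran }P^{\bot}$-component of $U^*(x \oplus y)$) one checks $(\Psi(0)^*)^n(x \oplus y) = A^{*(n-1)}z \oplus 0$ for $n \geq 1$. The case $n = 1$ forces the complementary component $B^*x + D^*y$ of $U^*(x \oplus y)$ to vanish while giving $\|z\| = \|x \oplus y\|$, and the remaining conditions say $z \in \clh_{i}(A^*)$. Since $A = P^{\bot}UP^{\bot}|_{\mbox{ran }P^{\bot}}$ is c.n.c.\ by Theorem \ref{purecrit}, $\clh_{i}(A^*) = \{0\}$, forcing $z = 0$; together with $B^*x + D^*y = 0$ this gives $U^*(x \oplus y) = 0$, hence $x \oplus y = 0$ and $\Psi(0)$ is c.n.c.

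The computations are routine inductions, so I expect the only delicate point to be the reduction that isolates the diagonal corner. The key mechanism is that the unitarity of $U$ makes the norm of each iterate telescope to a quantity involving $D$ alone (respectively $A$ alone); this is what licenses the application of Theorem \ref{purecrit}, and one must be careful to dispatch the off-diagonal contribution at the $n = 1$ step before invoking the c.n.c.\ property of the corner.
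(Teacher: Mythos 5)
Your proof is correct and follows essentially the same route as the paper: both verify the partial-isometry property directly from $\Phi(0)=PU^*$, $\Psi(0)=UP^{\bot}$, and then reduce $\clh_{i}(\Phi(0)^*)=\{0\}$ and $\clh_{i}(\Psi(0)^*)=\{0\}$ to the c.n.c.\ property of the diagonal corners of $U$ supplied by Theorem \ref{purecrit}. The only difference is notational: you iterate the $2\times 2$ block matrices and track norms, whereas the paper expresses the same reduction through the set identities $\clh_{i}(\Phi(0)^*)=\clh_{i}(PUP|_{\mbox{ran }P})$ and $\clh_{i}(\Psi(0)^*)=U\clh_{i}(P^{\bot}U^*P^{\bot}|_{\mbox{ran }P^{\bot}})$.
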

\begin{proof}
For the sake of computations, let us denote $\cle:= \cld_{T_1^*} \oplus \cld_{T_2^*}$. It is evident from the definition that $(\Phi(0),\Psi(0)) = (PU^*,UP^{\bot})$ is a pair of partial isometries on $\cle$. Now, for any $h \in \clh_i(\Phi(0)^*)$ and $\eta \in P^{\bot} \cle$ we have
\begin{align*}
\langle h, \eta \rangle = \langle \Phi(0) \Phi(0)^*h, \eta \rangle = \langle Ph, \eta \rangle = 0.
\end{align*}
Thus, $\clh_i(\Phi(0)^*) \subseteq P\cle$ and therefore,
\begin{align*}
\clh_i(\Phi(0)^*) &= \clh_i((PU^*)^*)\\ 
&= \{h \in P \cle: (PU^*)^n (PU^*)^{*n} h = h, \text { for all } n \in \Nat \}\\
&= \{h \in P \cle: (PU^*P)^{n-1} (PU^*P)^{*n-1} h = h, n\geq 1 \} \\
&= \clh_i(PUP|_{\mbox{ran }P}).
\end{align*}
Similarly, for any $h \in \clh_i(\Psi(0)^*)$ and $\eta \in U P \cle$ we have
\begin{align*}
\langle h, \eta \rangle = \langle \Psi(0) \Psi(0)^*h, \eta \rangle = \langle UP^{\bot}U^*h, \eta \rangle = 0.
\end{align*}
Thus, $\clh_i(\Psi(0)^*) \subseteq UP^{\bot} \cle$ and therefore,
\begin{align*}
\clh_i(\Psi(0)^*) &= \clh_i((UP^{\bot})^*)\\ 
&= \{h \in UP^{\bot} \cle: (UP^{\bot})^n (UP^{\bot})^{*n} h = h, \text { for all } n \in \Nat \}\\
&= \{h \in UP^{\bot} \cle: U(P^{\bot}UP^{\bot})^{n-1} (P^{\bot}UP^{\bot})^{*n-1} U^*h = h, n \geq 1 \} \\
&= \{h \in UP^{\bot} \cle: (P^{\bot}UP^{\bot})^{n-1} (P^{\bot}UP^{\bot})^{*n-1} U^*h = U^*h, n\geq 1 \} \\
&= U\{h \in P^{\bot} \cle: (P^{\bot}UP^{\bot})^{n-1} (P^{\bot}UP^{\bot})^{*n-1} h = h, n\geq 1 \} \\
&= U\clh_i(P^{\bot}U^*P^{\bot}|_{\mbox{ran }P^{\bot}}).
\end{align*}
From Theorem \ref{purecrit} we have $ \clh_i(PUP|_{\mbox{ran }P}) = \{0\} = \clh_i(P^{\bot}U^*P^{\bot}|_{\mbox{ran }P^{\bot}})$. This completes the proof.
\end{proof}

There are a few important corollaries which we will like to record in the sequel. 

\begin{cor}\label{defects}
In the setting of Theorem \ref{symbols_cnc}, both $\Phi(0)$ and $\Psi(0)$ possesses the wandering subspace property. 
\end{cor}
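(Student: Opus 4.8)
The plan is to chain Theorem \ref{symbols_cnc} with the abstract criterion recorded in Proposition \ref{partial_wander}. By Theorem \ref{symbols_cnc}, both $\Phi(0)$ and $\Psi(0)$ are completely non co-isometric partial isometries on $\cle = \cld_{T_1^*} \oplus \cld_{T_2^*}$. On the other hand, Proposition \ref{partial_wander} identifies the wandering subspace property, for a \emph{completely non unitary} partial isometry, with the property of being completely non co-isometric. Hence the only thing that remains to be checked is that the two partial isometries at hand are c.n.u., so that Proposition \ref{partial_wander} becomes applicable.

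First I would record the elementary observation that, for a contraction $S$, being c.n.c. already forces it to be c.n.u. Indeed, as recalled in Section \ref{sec-prelim}, the largest reducing subspace on which $S$ acts as a unitary satisfies $\clh_U(S^*) = \clh_i(S) \cap \clh_i(S^*)$. If $S$ is c.n.c., then $\clh_i(S^*) = \{0\}$, whence $\clh_U(S^*) = \{0\}$; that is, $S$ is c.n.u. Applying this with $S = \Phi(0)$ and then with $S = \Psi(0)$, both partial isometries produced by Theorem \ref{symbols_cnc} are simultaneously c.n.u. and c.n.c.

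With this in place the conclusion is immediate: invoking the forward implication of Proposition \ref{partial_wander}, each of the c.n.u. partial isometries $\Phi(0)$ and $\Psi(0)$, being c.n.c., possesses the wandering subspace property. Since each step is a direct appeal to a result already established in the excerpt, there is no genuine obstacle here; the only point worth isolating is the implication that a c.n.c. contraction is automatically c.n.u., which is precisely what allows Theorem \ref{symbols_cnc} and Proposition \ref{partial_wander} to be combined.
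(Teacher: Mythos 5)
Your proof is correct and takes essentially the same route as the paper, which likewise obtains the corollary by chaining Theorem \ref{symbols_cnc} with Proposition \ref{partial_wander}. Your explicit verification of the c.n.u. hypothesis --- that a c.n.c. contraction is automatically c.n.u., via $\clh_U(T) = \clh_i(T) \cap \clh_i(T^*)$ --- fills in a step the paper leaves implicit here (and itself invokes later, in the proof of Theorem \ref{purecrit2}).
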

\begin{proof}
The result follows from Theorem \ref{symbols_cnc} and Proposition \ref{partial_wander}.
\end{proof}

\begin{cor}\label{symbols_cnu}
In the setting of Theorem \ref{symbols_cnc}, both $\Phi(z)$ and $\Psi(z)$ are c.n.u. contractions on $\cle$ for all $z \in \mathbb{D}$.
\end{cor}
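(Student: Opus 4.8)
The plan is to realize the symbols $\Phi$ and $\Psi$ as transfer functions of suitable unitary colligations whose constant terms are exactly $\Phi(0)$ and $\Psi(0)$, and then invoke Proposition \ref{transfer_cnu}. The point is that, by Theorem \ref{symbols_cnc}, $\Phi(0)=PU^{*}$ and $\Psi(0)=UP^{\bot}$ are c.n.c. partial isometries, hence c.n.c. contractions. So once the transfer-function representation is in place, Proposition \ref{transfer_cnu} immediately gives that $\Phi(z)$ and $\Psi(z)$ are c.n.u. for every $z\in\D$, with no further input on $(T_1,T_2)$ needed.

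To realize $\Phi$ as a transfer function, write $\cle=\cld_{T_1^*}\oplus\cld_{T_2^*}$ and note that $\Phi(z)=(P+zP^{\bot})U^{*}=PU^{*}+zP^{\bot}U^{*}$ is affine in $z$. I would therefore look for a unitary colligation whose ``$D$-block'' vanishes, so that its transfer function is affine with constant term $PU^{*}$. Concretely, set
\[
W_{1}=\begin{bmatrix} PU^{*} & \iota \\ P^{\bot}U^{*} & 0 \end{bmatrix}\in\clb(\cle\oplus P^{\bot}\cle),
\]
where $\iota:P^{\bot}\cle\raro\cle$ is the inclusion and $P^{\bot}U^{*}$ is read as a map $\cle\raro P^{\bot}\cle$. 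Then
\[
\tau_{W_1}(z)=PU^{*}+z\,\iota\,(I-z\cdot 0)^{-1}P^{\bot}U^{*}=(P+zP^{\bot})U^{*}=\Phi(z),
\]
with $\tau_{W_1}(0)=PU^{*}=\Phi(0)$. The only thing to verify is that $W_{1}$ is genuinely unitary, and this follows from $P=P^{*}=P^{2}$ and $U^{*}U=UU^{*}=I$; for instance the two entries of the first column satisfy $(PU^{*})^{*}(PU^{*})+(P^{\bot}U^{*})^{*}(P^{\bot}U^{*})=U(P+P^{\bot})U^{*}=I$, and the remaining block identities are equally routine.

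The symbol $\Psi$ is handled by the mirror-image colligation
\[
W_{2}=\begin{bmatrix} UP^{\bot} & U\iota' \\ P & 0 \end{bmatrix}\in\clb(\cle\oplus P\cle),
\]
with $\iota':P\cle\raro\cle$ the inclusion and $P$ read as the corestriction $\cle\raro P\cle$; here the first-column identity is simply $(UP^{\bot})^{*}(UP^{\bot})+P^{*}P=P^{\bot}+P=I$. This gives $\tau_{W_2}(z)=UP^{\bot}+zUP=U(P^{\bot}+zP)=\Psi(z)$ and $\tau_{W_2}(0)=UP^{\bot}=\Psi(0)$. Applying Proposition \ref{transfer_cnu} to $\tau_{W_1}$ and $\tau_{W_2}$, whose constant terms are c.n.c. by Theorem \ref{symbols_cnc}, then completes the argument.

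I expect the only genuinely delicate point to be the recognition that an affine BCL symbol is the transfer function of a unitary colligation with zero $D$-block, together with the bookkeeping needed to verify unitarity of $W_{1}$ and $W_{2}$; everything after that is a direct citation of Proposition \ref{transfer_cnu}. The conceptual content of the corollary is thus entirely inherited from the c.n.c. property of the constant terms secured in Theorem \ref{symbols_cnc}, which in turn rests on Theorem \ref{purecrit}.
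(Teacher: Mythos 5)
Your proof is correct and is essentially the paper's own argument: the colligations $W_1$ and $W_2$ you construct coincide, up to the canonical identifications $P^{\bot}\cle\cong\cld_{T_1^*}$ and $P\cle\cong\cld_{T_2^*}$, with the unitaries $U_{\Phi}$ and $U_{\Psi}$ that the paper (citing the method of Theorem 3.1 of \cite{Fac}) uses to realize $\Phi$ and $\Psi$ as transfer functions with zero $D$-block. The only difference is that you verify unitarity of the colligations by hand rather than by citation; the decisive step --- applying Proposition \ref{transfer_cnu} to constant terms that are c.n.c.\ by Theorem \ref{symbols_cnc} --- is identical.
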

\begin{proof}
The method in [Theorem 3.1, \cite{Fac}] shows that $\Phi(z)$ is a transfer function corresponding to the unitary
\[
U_{\Phi} = \begin{bmatrix}
PU^* & i_1 \\i_1^*U^* & 0 
\end{bmatrix}: (\cld_{T_1^*}\oplus \cld_{T_2^*}) \oplus \cld_{T_1^*} \raro (\cld_{T_1^*}\oplus \cld_{T_2^*}) \oplus \cld_{T_1^*},
\]
where $i_1: \cld_{T_1^*} \raro \cld_{T_1^*}\oplus \cld_{T_2^*}$ is the inclusion map defined by $i_1(D_{T_1^*} h) = (D_{T_1^*} h,0)$ for $h \in \clh$. Similarly, $\Psi(z)$ is a transfer function corresponding to the unitary
\[
U_{\Psi} = \begin{bmatrix}
UP^{\bot} & Ui_2 \\i_2^* & 0 
\end{bmatrix}: (\cld_{T_1^*}\oplus \cld_{T_2^*}) \oplus \cld_{T_2^*} \raro (\cld_{T_1^*}\oplus \cld_{T_2^*}) \oplus \cld_{T_2^*},
\]
where $i_2: \cld_{T_2^*} \raro \cld_{T_1^*}\oplus \cld_{T_2^*}$ is defined by $i_2(D_{T_2^*}h) = (0,D_{T_2^*}h)$. The proof now follows by applying Proposition \ref{transfer_cnu}.
\end{proof}

\subsection{Pure contractions and BCL triples}
In this subsection we are interested in identifying pairs of commuting pure contractions $(T_1,T_2)$ with complete BCL triples $(\cld_{T_1^*} \oplus \cld_{T_2^*},U,P)$ for which, the associated $M_{\Phi}$, $M_{\Psi}$ are pure isometries. Our first result is in the case of finite dimensional defect spaces.
\begin{thm}\label{purecrit2}
Let $(T_1,T_2)$ be a pair of commuting pure contractions on $\clh$ such that $\mbox{dim } \cld_{T_i^*}< \infty$ for $i=1,2$. Then the isometries $M_{\Phi}, M_{\Psi}$ associated with the complete BCL triple $(\cld_{T_1^*} \oplus \cld_{T_2^*},U,P)$ for $(T_1,T_2)$, are pure isometries.
\end{thm}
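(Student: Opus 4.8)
The plan is to reduce the problem, via the pureness criterion of Theorem~\ref{compure3}, to a statement about the diagonal blocks of $U$, and then to upgrade the ``completely non co-isometric'' conclusion of Theorem~\ref{purecrit} to ``pure'' using finite dimensionality. Since $\mbox{dim }\cld_{T_i^*}<\infty$ for $i=1,2$, Remark~\ref{rem1} ensures that $(\cld_{T_1^*}\oplus\cld_{T_2^*},U,P)$ is a complete BCL triple, so $M_{\Phi},M_{\Psi}$ are well defined. By Theorem~\ref{compure3}, it suffices to show that $A=P^{\bot}UP^{\bot}|_{\mbox{ran }P^{\bot}}$ and $D^*=PU^*P|_{\mbox{ran }P}$ are pure contractions; and Theorem~\ref{purecrit} already guarantees that $(A,D^*)$ is a pair of c.n.c.\ contractions. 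The whole argument therefore collapses to a single finite-dimensional fact.

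The key step is the following: a c.n.c.\ contraction $S$ on a finite-dimensional Hilbert space is automatically pure. I would prove this by contraposition. On a finite-dimensional space, $S$ is pure precisely when $\|S^{*m}\|=\|S^m\|\to 0$, which by Gelfand's formula is equivalent to the spectral radius satisfying $r(S)<1$. If $S$ were not pure, then, being a contraction, it would satisfy $r(S)=1$ and hence possess a unimodular eigenvalue $\lambda$, say $Sv=\lambda v$ with $v\neq 0$. The standard contraction identity $\|S^*v-\bar\lambda v\|^2=\|S^*v\|^2-\|v\|^2\le 0$ (valid since $|\lambda|=1$ and $\|S^*v\|\le\|v\|$) then forces $S^*v=\bar\lambda v$, so that $\|S^{*n}v\|=\|v\|$ for every $n$ and $v\in\clh_i(S^*)$. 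This contradicts $\clh_i(S^*)=\{0\}$, so $S$ must be pure.

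Applying this to $A$, which acts on the finite-dimensional space $\cld_{T_1^*}$, and to $D^*$, which acts on the finite-dimensional space $\cld_{T_2^*}$, and both of which are c.n.c.\ by Theorem~\ref{purecrit}, we conclude that $A$ and $D^*$ are pure contractions. Theorem~\ref{compure3} then yields at once that $M_{\Phi}$ and $M_{\Psi}$ are pure isometries. The only genuinely non-routine point is the finite-dimensional lemma of the second paragraph---in particular the passage from the absence of unimodular eigenvalues to $r(S)<1$; all the remaining steps amount to identifying the diagonal blocks with $A$ and $D^*$ and quoting Theorems~\ref{purecrit} and~\ref{compure3}.
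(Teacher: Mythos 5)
Your proposal is correct, and it follows the paper's overall architecture --- invoke Remark \ref{rem1} for completeness of the BCL triple, Theorem \ref{purecrit} for the c.n.c.\ property of the diagonal blocks $A$ and $D^*$, and Theorem \ref{compure3} to convert pureness of those blocks into pureness of $M_{\Phi},M_{\Psi}$ --- but the finite-dimensional upgrade in the middle is done by a genuinely different (and in fact sounder) argument. The paper's proof passes from c.n.c.\ to c.n.u.\ (via $\clh_U(T)\subseteq \clh_i(T^*)$) and then asserts that on finite-dimensional spaces c.n.u.\ contractions coincide with strict contractions $\|T\|<1$; that assertion is false as stated, since a nilpotent Jordan block such as $\begin{bmatrix} 0&1\\0&0\end{bmatrix}$ is c.n.u.\ (indeed pure) yet has norm one. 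What is true, and what the paper's conclusion actually needs, is that finite dimensionality forces $r(T)<1$, equivalently pureness --- which is exactly the lemma you prove: if a finite-dimensional contraction $S$ is not pure then $r(S)=1$, a unimodular eigenvalue $\lambda$ exists, and the identity $\|S^*v-\bar\lambda v\|^2=\|S^*v\|^2-\|v\|^2\le 0$ puts the eigenvector $v$ in $\clh_i(S^*)$, contradicting the c.n.c.\ hypothesis. Note that your argument even bypasses the c.n.c.$\,\Rightarrow\,$c.n.u.\ step: the eigenvector lands directly in $\clh_i(S^*)$, which is the space Theorem \ref{purecrit} kills. So your route buys a self-contained and correct finite-dimensional step where the paper's shortcut through strict contractions does not literally hold, at the modest cost of spelling out the Gelfand spectral-radius argument that the paper compresses into one sentence.
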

\begin{proof}
For any contraction $T$ we have $\clh_{\text{cnu}}(T) \subseteq \clh_i(T^*)$ and therefore, $\clh_i(T^*)=0$ or $\clh_i(T)=0$ implies that $T$ is a c.n.u contraction. Thus, by Theorem \ref{purecrit}, we obtain that $(P^{\bot}UP^{\bot}|_{\mbox{ran }P^{\bot}},$ $PU^*P|_{\mbox{ran }P})$ is also a pair of c.n.u. contractions. On finite dimensional Hilbert spaces the class of c.n.u. contractions coincide with the class of strict contractions (that is, $\|T\| < 1$). The proof now follows by applying Theorem \ref{compure3}.
\end{proof}

\begin{lemma}\label{kernel}
In the setting of Theorem \ref{purecrit}, 
\begin{enumerate}
\item[(a)] $lim_{m \raro \infty} \|A^{*m} \eta \| = 0$ for all $\eta \in \mbox{ker }T_1^*$.
\item[(b)] $lim_{m \raro \infty} \|D^{m} \eta \| = 0$ for all $\eta \in \mbox{ker }T_2^*$.
\end{enumerate}
\end{lemma}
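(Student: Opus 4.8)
The plan is to deduce both claims directly from the recurrence relations in Lemma \ref{condn1}, with the purity of the \emph{opposite} contraction doing the analytic work. The key preliminary observation is that any $\eta \in \mbox{ker }T_1^*$ is fixed by its defect operator: since $T_1^*\eta = 0$ we have $D_{T_1^*}^2 \eta = (I - T_1 T_1^*)\eta = \eta$, and as $D_{T_1^*}\geq 0$ with $\|D_{T_1^*}\|\leq 1$ this forces $D_{T_1^*}\eta = \eta$. In particular $\eta = D_{T_1^*}\eta \in \cld_{T_1^*}$, so that $A^{*m}\eta$ is meaningful. The mirror statement $D_{T_2^*}\eta = \eta$ holds for $\eta \in \mbox{ker }T_2^*$, placing such $\eta$ in $\cld_{T_2^*}$.

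For statement $(a)$ I would apply Lemma \ref{condn1}$(i)$ with $h = \eta \in \mbox{ker }T_1^*$. The left-hand side becomes $A^{*m}D_{T_1^*}\eta = A^{*m}\eta$, while on the right every term of the sum carries the factor $T_1^* T_2^{*(m-1-k)}\eta$. Since $T_1$ and $T_2$ commute, so do $T_1^*$ and $T_2^*$, and hence $T_1^* T_2^{*(m-1-k)}\eta = T_2^{*(m-1-k)}T_1^*\eta = 0$. Thus the entire summation vanishes, leaving the clean identity $A^{*m}\eta = D_{T_1^*}T_2^{*m}\eta$. Taking norms and using $\|D_{T_1^*}\|\leq 1$ gives $\|A^{*m}\eta\| \leq \|T_2^{*m}\eta\|$, which tends to $0$ as $m \raro \infty$ because $T_2$ is a pure contraction.

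Statement $(b)$ follows by the symmetric argument: applying Lemma \ref{condn1}$(ii)$ with $h = \eta \in \mbox{ker }T_2^*$ and using $D_{T_2^*}\eta = \eta$, the sum again collapses because each term contains the factor $T_2^* T_1^{*(m-1-k)}\eta = T_1^{*(m-1-k)}T_2^*\eta = 0$. This yields $D^m \eta = D_{T_2^*}T_1^{*m}\eta$, whence $\|D^m \eta\| \leq \|T_1^{*m}\eta\| \raro 0$ by the purity of $T_1$.

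I do not anticipate a genuine obstacle once Lemma \ref{condn1} is available; the only point that requires care is recognizing that the correction sum in each recurrence is annihilated by a kernel vector. This is precisely the interplay between the factor $T_i^*$ sitting inside every correction term and the commutativity of the pair $(T_1,T_2)$. Everything else reduces to a routine norm estimate together with the definition of a pure contraction.
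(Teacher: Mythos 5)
Your proposal is correct and follows essentially the same route as the paper's proof: both substitute a kernel vector of $T_1^*$ into Lemma \ref{condn1}, observe that every correction term is annihilated because $T_1^*$ commutes past $T_2^{*(m-1-k)}$ and kills the vector, and then conclude from the purity of the opposite contraction via $\|A^{*m}\eta\| \leq \|T_2^{*m}\eta\| \raro 0$ (and symmetrically for $D^m$). The only cosmetic difference is that the paper substitutes $h = D_{T_1^*}\eta$ and uses $D_{T_1^*}^2\eta = \eta$, whereas you substitute $h=\eta$ directly after upgrading this to $D_{T_1^*}\eta = \eta$ using positivity; both are valid.
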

\begin{proof}
Let us prove statement $(a)$. Note that a contraction $T$ always satisfies the property $ T^* D_{T^*} = D_{T}T^*$ (see \cite{NF}). Now, if $\eta \in \mbox{ker }T_1^*$ then by the intertwining property, $D_{T_1^*} \eta \in \mbox{ker }T_1^*$. Replacing $h$ with $D_{T_1^*} \eta$ in the statement $(i)$ of Lemma \ref{condn1} we get that 
\[
A^{*m} D_{T_1^*}^2 \eta = D_{T_1^*}T_2^{*m}D_{T_1^*}\eta \quad (m \in \Nat),
\]
but $D_{T_1^*}^2 \eta = (I_{\clh} - T_1T_1^*)\eta = \eta$ implies that 
$ A^{*m} \eta = D_{T_1^*}T_2^{*m}D_{T_1^*}\eta$ and therefore, \\
$lim_{m \raro \infty} \|A^{*m} \eta \| = 0$. The statement $(b)$ follows in a similar manner by considering statement $(ii)$ of Lemma \ref{unicondn1}. This completes the proof.
\end{proof}

\begin{thm}\label{purecrit3}
Let $(T_1,T_2)$ be a pair of commuting pure partial isometries on $\clh$ such that $(\cld_{T_1^*} \oplus \cld_{T_2^*},U,P)$ is a complete BCL triple for $(T_1,T_2)$. Then the isometries $M_{\Phi}, M_{\Psi}$ associated with $(\cld_{T_1^*} \oplus \cld_{T_2^*},U,P)$, are pure isometries.
\end{thm}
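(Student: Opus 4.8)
The plan is to verify the hypotheses of Theorem \ref{compure3}, namely that the two diagonal compressions $A = P^{\bot}UP^{\bot}|_{\mbox{ran }P^{\bot}}$ (acting on $\mbox{ran }P^{\bot}$, which is canonically $\cld_{T_1^*}$) and $D^* = PU^*P|_{\mbox{ran }P}$ (acting on $\mbox{ran }P$, which is $\cld_{T_2^*}$) are pure contractions; once this is in hand, Theorem \ref{compure3} applied to the complete BCL triple $(\cld_{T_1^*}\oplus\cld_{T_2^*},U,P)$ immediately yields that $M_{\Phi}$ and $M_{\Psi}$ are pure isometries. By Theorem \ref{purecrit} we already know that $(A,D^*)$ is a pair of completely non co-isometric contractions, but pureness is strictly stronger than being c.n.c., so that result does not by itself close the argument. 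The extra ingredient is precisely the partial-isometry hypothesis on $T_1$ and $T_2$.

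First I would record that, since each $T_i$ is a partial isometry, the characterization (\ref{char_pi}) gives $\mbox{ker }T_i^* = \cld_{T_i^*}$ for $i=1,2$. This is the crucial simplification: the defect space on which each diagonal entry of $U$ acts coincides exactly with $\mbox{ker }T_i^*$. Now Lemma \ref{kernel}(a) asserts that $\lim_{m \raro \infty}\|A^{*m}\eta\| = 0$ for every $\eta \in \mbox{ker }T_1^*$; but $\mbox{ker }T_1^* = \cld_{T_1^*}$ is the entire space on which $A$ acts, so $A^{*m} \raro 0$ in the strong operator topology, i.e. $A$ is a pure contraction. Symmetrically, Lemma \ref{kernel}(b) gives $\lim_{m \raro \infty}\|D^{m}\eta\| = 0$ for every $\eta \in \mbox{ker }T_2^* = \cld_{T_2^*}$, which says exactly that $D^{m} \raro 0$ strongly, that is $(D^*)^{*m} \raro 0$; hence $D^*$ is a pure contraction.

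With both $A = P^{\bot}UP^{\bot}|_{\mbox{ran }P^{\bot}}$ and $D^* = PU^*P|_{\mbox{ran }P}$ shown to be pure contractions, Theorem \ref{compure3} delivers the desired conclusion. I do not anticipate a genuine obstacle in the body of the proof: the substance of the argument has been distilled into Theorem \ref{purecrit} and Lemma \ref{kernel} (both of which are available here because completeness of the BCL triple guarantees that $U$ is a unitary on all of $\cld_{T_1^*}\oplus\cld_{T_2^*}$), and the only conceptual move is to observe that the partial-isometry condition upgrades the partial convergence statements of Lemma \ref{kernel} (valid a priori only on $\mbox{ker }T_i^*$) into full pureness of the diagonal entries, since for partial isometries these kernels exhaust the defect spaces. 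The one point worth checking carefully is the bookkeeping that $\mbox{ran }P^{\bot}$ and $\mbox{ran }P$ are identified with $\cld_{T_1^*}$ and $\cld_{T_2^*}$ respectively, so that the domains of $A$ and $D$ entering Theorem \ref{compure3} match those appearing in Lemma \ref{kernel}.
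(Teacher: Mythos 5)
Your proposal is correct and is essentially the paper's own proof: the paper likewise observes that $\cld_{T_i^*} = \ker T_i^*$ for partial isometries (via (\ref{char_pi})) and then invokes Lemma \ref{kernel} together with Theorem \ref{compure3}. You have merely spelled out the details the paper leaves implicit, namely the identification of $A = P^{\bot}UP^{\bot}|_{\operatorname{ran} P^{\bot}}$ and $D^* = PU^*P|_{\operatorname{ran} P}$ and the passage from the convergence statements of Lemma \ref{kernel} to pureness of these diagonal entries.
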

\begin{proof}
Since $T_i$ is a partial isometry therefore, $\cld_{T_i^*} = \mbox{ker }T_i^*$, for $i=1,2$. The proof now follows from Lemma \ref{kernel}.
\end{proof}
\begin{rem}
Let us end this section by remarking that every pure contraction extends to a pure partial isometry. This is based on an observation by Halmos and McLaughlin (see \cite{HM}). Specifically, they observed that a contraction $T$ on $\clh$ always have a partially isometric extension $M(T)$, where
\[
M(T) := \begin{bmatrix} T & D_{T^*} \\ 0 & 0 \end{bmatrix}: \clh \oplus \clh
\raro \clh \oplus \clh.
\]
It clearly follows that $M(T)$ is a pure partial isometry on $\clh \oplus \clh$ if and only if $T$ is a pure contraction on $\clh$.
\end{rem}

\section{Dilation to commuting pure isometries}\label{sec-dilcont}
In this section, we show that the classes of commuting pure contractions appearing in the previous section always dilates to commuting pure isometries. Let us begin by briefly outlining the method that was developed by Das, Sarkar and the author (in \cite{Fac}) to get BCL type isometric dilations for pure pair of commuting contractions $(T_1,T_2)$. We will work with the convention already established in Section \ref{sec-purecont}. At first, let us observe that for $T=T_1T_2$, the following equality holds
\[
I - T T^* = I - T_1 T_2 T_1^* T_2^* = (I - T_1 T_1^*) + T_1 (I - T_2
T_2^*) T_1^*.
\]
Thus, we get an isometry $V: \cld_{T^*} \raro \cld_{T_1^*} \oplus \cld_{T_2^*}$ defined by
\[
V D_{T^*} h = (D_{T_1^*}h,D_{T_2^*}T_1^*h)\quad (h\in\Hil).
\]

If $T=T_1T_2$ is a pure contraction, then we set the following isometry [cf. (\ref{dil-def})]
\begin{equation}\label{piv}
\Pi_V:= (I_{H^2} \otimes V) \Pi_T : \clh \raro H^2(\cld_{T_1^*} \oplus \cld_{T_2^*}).
\end{equation}
In [Theorem 3.1, \cite{Fac}], the authors proved that a pure pair of commuting contractions with $\mbox{dim } \cld_{T_i^*}<\infty$ for $i=1,2$,   always dilates to a pure pair of commuting isometries. This result remains valid, even if we replace the assumption of finite dimensional defect spaces with the condition that $(\cld_{T_1^*} \oplus \cld_{T_2^*},U,P)$ is a complete BCL triple for $(T_1,T_2)$. This is due to the existence of a unitary $U$ on $\cld_{T_1^*} \oplus \cld_{T_2^*}$ satisfying the condition (\ref{U-h1}). More precisely, we have the following reformulation of [Theorem 3.1, \cite{Fac}]. The proof follows verbatim and therefore, we omit it.

\begin{thm}\label{DSSthm}
Let $(T_1,T_2)$ be a pure pair of commuting contractions on $\clh$ such that $(\cld_{T_1^*} \oplus \cld_{T_2^*},U,P)$ is a complete BCL triple for $(T_1,T_2)$. Then with respect to the isometry $\Pi_V:\clh \raro H^2(\cld_{T_1^*} \oplus \cld_{T_2^*})$ we have
\[
\Pi_V T^* = M_z^* \Pi_V, \quad \Pi_V T_1^* = M_{\Phi}^* \Pi_V, 
\quad \Pi_V T_2^* = M_{\Psi}^* \Pi_V,
\]
where
\[
\Phi(z)= (P+zP^{\perp})U^* \quad \text{and} 
\quad \Psi(z)= U(P^{\perp}+zP).
\]
\end{thm}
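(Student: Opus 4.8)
The plan is to establish the three intertwining identities by a direct computation using the explicit form of $\Pi_V$ in \eqref{piv}, reducing everything to the single-variable dilation theory already developed for the pure contraction $T=T_1T_2$. The key observation is that $\Pi_V = (I_{H^2}\otimes V)\Pi_T$, where $\Pi_T:\clh\raro H^2(\cld_{T^*})$ is the canonical isometric dilation of the pure contraction $T$ given by \eqref{dil-def}, and $V:\cld_{T^*}\raro\cld_{T_1^*}\oplus\cld_{T_2^*}$ is the isometry sending $D_{T^*}h$ to $(D_{T_1^*}h,D_{T_2^*}T_1^*h)$. Since $\Pi_T$ is an isometry and $V$ is an isometry, $I_{H^2}\otimes V$ is an isometry, so $\Pi_V$ is a genuine isometry and the candidate dilation is well defined.

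The first identity $\Pi_V T^* = M_z^*\Pi_V$ is essentially immediate: from \eqref{dil-def} one already has $\Pi_T T^* = M_z^*\Pi_T$, and since $I_{H^2}\otimes V$ commutes with $M_z^* = M_z^*\otimes I$, tensoring on the left by $V$ preserves the relation. For the two nontrivial identities I would work at the level of the total set $\{K(\cdot,w)\eta : w\in\D,\ \eta\in\cld_{T_1^*}\oplus\cld_{T_2^*}\}$, using the adjoint action $M_\Phi^*(K(\cdot,w)\eta)=K(\cdot,w)\Phi(w)^*\eta$ recalled in Section~\ref{sec-prelim}. First I would compute $(\Pi_V h)(z)$ explicitly: applying $I_{H^2}\otimes V$ to $(\Pi_T h)(z)=D_{T^*}(I_\clh - zT^*)^{-1}h$ gives a $\cld_{T_1^*}\oplus\cld_{T_2^*}$-valued function whose coefficients are governed by the two components of $V$. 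I would then verify $\Pi_V T_1^* = M_\Phi^*\Pi_V$ by pairing both sides against $K(\cdot,w)\eta$ and checking the resulting scalar identities reduce to the defining relations \eqref{unicondn1}--\eqref{unicondn4} of the unitary $U$, together with the commutation $T_1^*T_2^*=T^*$; the appearance of $\Phi(w)^*=(P+\bar w P^\perp)^*U = U^*(P+\bar w P^\perp)$ in $M_\Phi^*$ is exactly what matches the two-block structure of $V$ under the action of $U$.

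The main obstacle is bookkeeping the interaction between the resolvent $(I_\clh - wT^*)^{-1}$ appearing in $\Pi_V$ and the operator $U$ whose defining identities \eqref{unicondn1}--\eqref{unicondn2} are stated pointwise in $h$ rather than as operator identities. One must feed the vectors $D_{T_1^*}(I_\clh-wT^*)^{-1}h$ and $D_{T_2^*}T_1^*(I_\clh-wT^*)^{-1}h$ through the relations defining $U$ and confirm that the mixing produced by $\Phi(w)^*$ (respectively $\Psi(w)^*$) reproduces precisely the shifted vectors corresponding to $T_1^*$ (respectively $T_2^*$). The essential point that makes this go through is the compatibility encoded in \eqref{U-h1}, namely that $U$ was constructed to send $\clm_U$ onto $\cln_U$; this is exactly the condition guaranteeing that the two-variable symbols $\Phi,\Psi$ interlace with the single-variable dilation of $T=T_1T_2$. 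Since the completeness hypothesis on the BCL triple $(\cld_{T_1^*}\oplus\cld_{T_2^*},U,P)$ guarantees that such a unitary $U$ exists on the full space (extending the isometry of \eqref{U-h1}), the computation is identical to the one carried out in [Theorem 3.1, \cite{Fac}] for finite-dimensional defect spaces, and the only change is that finite-dimensionality is nowhere used once the unitary $U$ is in hand. Accordingly, as noted, the proof follows verbatim from that reference.
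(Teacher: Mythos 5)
Your proposal is correct and takes essentially the same route as the paper: the paper likewise reduces the theorem to [Theorem 3.1, \cite{Fac}], observing that finite-dimensionality of the defect spaces enters that proof only through the existence of the unitary extension of the isometry in (\ref{U-h1}), which the completeness hypothesis supplies directly, so the argument carries over verbatim. The one slip in your sketch, immaterial to the argument, is the adjoint symbol: from $\Phi(w)=(P+wP^{\perp})U^*$ one gets $\Phi(w)^* = U(P+\bar{w}P^{\perp})$, not $U^*(P+\bar{w}P^{\perp})$.
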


We will now obtain the main result of this section in the case of finite dimensional defect spaces.

\begin{thm}\label{purecont1}
Let $(T_1,T_2)$ be a pair of commuting pure contractions such that $\mbox{dim } \cld_{T_i^*}< \infty$, for $i=1,2$. Then $(T_1,T_2)$ dilates to a pair of commuting pure isometries $(M_\Phi, M_{\Psi})$ on $H^2(\cld_{T_1^*} \oplus \cld_{T_2^*})$. 
\end{thm}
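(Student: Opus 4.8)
The plan is to assemble three facts already available in the paper: finite-dimensionality of the defect spaces yields a complete BCL triple, Theorem \ref{DSSthm} provides the BCL-type isometric dilation, and Theorem \ref{purecrit2} guarantees that the symbols produce pure isometries. So the whole argument is a matter of checking that the hypotheses of these results line up with the standing assumption.

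First I would verify the two hypotheses needed to invoke Theorem \ref{DSSthm}. The pair $(T_1,T_2)$ is assumed to consist of commuting pure contractions, so in particular $T_1$ is pure. Since $T_1$ and $T_2$ commute, for $T = T_1 T_2$ one has $T^{*m} = T_2^{*m} T_1^{*m}$, and as $T_2^*$ is a contraction this gives $\|T^{*m} h\| \le \|T_1^{*m} h\| \raro 0$ for every $h \in \clh$; hence $T$ is a pure contraction and $(T_1,T_2)$ is a pure pair in the sense used by Theorem \ref{DSSthm}. Next, because $\mbox{dim } \cld_{T_i^*} < \infty$ for $i = 1,2$, Remark \ref{rem1} applies and tells us that $(\cld_{T_1^*} \oplus \cld_{T_2^*}, U, P)$ is a complete BCL triple for $(T_1,T_2)$.

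With both hypotheses in hand, Theorem \ref{DSSthm} applies verbatim: the isometry $\Pi_V : \clh \raro H^2(\cld_{T_1^*} \oplus \cld_{T_2^*})$ defined in (\ref{piv}) intertwines as $\Pi_V T_1^* = M_\Phi^* \Pi_V$ and $\Pi_V T_2^* = M_\Psi^* \Pi_V$, where $\Phi, \Psi$ are the BCL symbols of the complete triple. This already exhibits $(M_\Phi, M_\Psi)$ on $H^2(\cld_{T_1^*} \oplus \cld_{T_2^*})$ as a commuting isometric dilation of $(T_1,T_2)$. The only remaining point is to upgrade ``isometric dilation'' to ``pure isometric dilation'', and this is exactly Theorem \ref{purecrit2}, which under the present hypothesis asserts that $M_\Phi$ and $M_\Psi$ are pure isometries. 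Combining the two conclusions finishes the argument.

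I do not expect a genuine obstacle here, since every step is a direct appeal to an earlier result; the theorem is essentially a corollary that packages the machinery of Section \ref{sec-purecont}. The only places deserving care are the two compatibility checks above—that purity of the individual contractions forces $(T_1,T_2)$ to be a pure pair, and that finite-dimensionality is precisely what Remark \ref{rem1} needs. The substantive content was already carried out upstream: Theorem \ref{purecrit} establishes that the diagonal blocks $P^{\bot}UP^{\bot}|_{\mbox{ran }P^{\bot}}$ and $PU^*P|_{\mbox{ran }P}$ are completely non co-isometric, and in finite dimensions this forces them to be strict contractions, which via Theorem \ref{compure3} is exactly the condition for $M_\Phi, M_\Psi$ to be pure.
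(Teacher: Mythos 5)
Your proposal is correct and follows the paper's own proof essentially verbatim: finite-dimensional defect spaces give a complete BCL triple (Remark \ref{rem1}), Theorem \ref{purecrit2} makes $(M_\Phi, M_\Psi)$ a pair of pure isometries, and Theorem \ref{DSSthm} supplies the intertwining isometry $\Pi_V$. Your explicit check that purity of each $T_i$ forces the product $T = T_1T_2$ to be pure (so that the "pure pair" hypothesis of Theorem \ref{DSSthm} holds) is a detail the paper leaves implicit, and it is a worthwhile addition.
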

\begin{proof}
Since $\mbox{dim }\cld_{T_i^*}<\infty$ for $i=1,2$, therefore, $(\cld_{T_1^*} \oplus \cld_{T_2^*},U,P)$ is a complete BCL triple for $(T_1,T_2)$. It follows from Theorem \ref{purecrit2}, that $(T_1,T_2)$ is a pair of pure contractions implies that $(M_{\Phi},M_{\Psi})$ associated with the BCL triple $(\cld_{T_1^*} \oplus \cld_{T_2^*},U,P)$, is a pair of pure isometries. The proof now follows by applying Theorem \ref{DSSthm}.
\end{proof}

\begin{rem}
Based on Theorem \ref{DSSthm} and the computations in Theorem \ref{compure3}, we are able to conclude that $\mbox{dim }\mbox{ker } M_{\Phi}^* = \mbox{dim } P^{\bot} \cle = \mbox{dim } \cld_{T_1^*}$ and similarly, $\mbox{dim } \mbox{ker } M_{\Psi}^*= \mbox{dim } \cld_{T_2^*}$. 
\end{rem}

We are now ready to obtain the main result for partial isometries.
\begin{thm}\label{purecont2}
Let $(T_1,T_2)$ be a pair of commuting pure partial isometries. Then $(T_1,T_2)$ always dilates to a pair of commuting pure isometries.
\end{thm}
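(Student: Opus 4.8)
The plan is to reduce to the situation of a \emph{complete} BCL triple, which is already handled: by Theorem \ref{purecrit3} a pair of commuting pure partial isometries carrying a complete BCL triple gives rise to pure isometries $(M_{\Phi},M_{\Psi})$, and Theorem \ref{DSSthm} shows these dilate the pair. The obstruction is that an arbitrary pair $(T_1,T_2)$ of commuting pure partial isometries need not admit a complete BCL triple, since the dimension‑matching condition $\dim\big((\cld_{T_1^*}\oplus\cld_{T_2^*})\ominus\clm_U\big)=\dim\big((\cld_{T_1^*}\oplus\cld_{T_2^*})\ominus\cln_U\big)$ may fail. To circumvent this I would enlarge the pair by an infinite ampliation‑by‑zero that equalises the two defect complements while preserving commutativity, purity and the partial‑isometry property.

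Concretely, fix an infinite dimensional Hilbert space $\clg$ with $\dim\clg\geq\dim\big((\cld_{T_1^*}\oplus\cld_{T_2^*})\ominus\clm_U\big)$ and $\dim\clg\geq\dim\big((\cld_{T_1^*}\oplus\cld_{T_2^*})\ominus\cln_U\big)$, say $\dim\clg=\dim(\cld_{T_1^*}\oplus\cld_{T_2^*})+\aleph_0$, and set $\wt{T}_i:=T_i\oplus 0_{\clg}$ on $\wt{\clh}:=\clh\oplus\clg$. First I would record that $(\wt{T}_1,\wt{T}_2)$ is again a commuting pair of pure partial isometries: commutativity and the identity $\wt{T}_i\wt{T}_i^*\wt{T}_i=\wt{T}_i$ hold coordinatewise, and $\wt{T}_1\wt{T}_2=(T_1T_2)\oplus 0$ is pure because $(\wt{T}_1\wt{T}_2)^{*m}=(T_1T_2)^{*m}\oplus 0\to 0$ in the strong operator topology.

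The crux is the dimension count. Since $\wt{T}_i\wt{T}_i^*=(T_iT_i^*)\oplus 0$, one gets $D_{\wt{T}_i^*}=D_{T_i^*}\oplus I_{\clg}$ and $\cld_{\wt{T}_i^*}=\cld_{T_i^*}\oplus\clg$. Writing $\wt{h}=(h,g)$, a typical element of $\clm_{\wt U}$ is $(D_{T_1^*}T_2^*h,0)\oplus(D_{T_2^*}h,g)$, so the copy of $\clg$ sitting inside $\cld_{\wt{T}_1^*}$ is orthogonal to $\clm_{\wt U}$ while the copy inside $\cld_{\wt{T}_2^*}$ is absorbed into it; the roles are reversed for $\cln_{\wt U}$, whose typical element is $(D_{T_1^*}h,g)\oplus(D_{T_2^*}T_1^*h,0)$. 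Setting $\cle=\cld_{T_1^*}\oplus\cld_{T_2^*}$ and $\wt{\cle}=\cld_{\wt{T}_1^*}\oplus\cld_{\wt{T}_2^*}$, this yields $\dim(\wt{\cle}\ominus\clm_{\wt U})=\dim\clg+\dim(\cle\ominus\clm_U)$ and $\dim(\wt{\cle}\ominus\cln_{\wt U})=\dim\clg+\dim(\cle\ominus\cln_U)$. By the choice of $\clg$ and infinite cardinal arithmetic both right‑hand sides equal $\dim\clg$, so $(\wt{\cle},\wt{U},P)$ is a complete BCL triple for $(\wt{T}_1,\wt{T}_2)$; this is precisely the infinite dimensional example promised in Remark \ref{rem1}.

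Finally I would assemble the pieces. By Theorem \ref{purecrit3} the isometries $(M_{\Phi},M_{\Psi})$ associated with this complete triple are pure, and by Theorem \ref{DSSthm} there is an isometry $\Pi_V:\wt{\clh}\to H^2(\wt{\cle})$ with $\Pi_V\wt{T}_1^*=M_{\Phi}^*\Pi_V$ and $\Pi_V\wt{T}_2^*=M_{\Psi}^*\Pi_V$. Since $\clh$ reduces each $\wt{T}_i$ and $\wt{T}_i^*|_{\clh}=T_i^*$, the restriction $\Pi_V|_{\clh}:\clh\to H^2(\wt{\cle})$ is an isometry satisfying $\Pi_V|_{\clh}\,T_1^*=M_{\Phi}^*\,\Pi_V|_{\clh}$ and $\Pi_V|_{\clh}\,T_2^*=M_{\Psi}^*\,\Pi_V|_{\clh}$, so $(M_{\Phi},M_{\Psi})$ is the desired dilation of $(T_1,T_2)$ to a pair of commuting pure isometries. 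The main obstacle, as flagged above, is the dimension bookkeeping showing that the ampliation restores completeness without destroying purity or the partial‑isometry structure; once that is in place the rest is a direct descent of the dilation from $\wt{\clh}$ to its reducing subspace $\clh$.
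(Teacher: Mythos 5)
Your proposal is correct and takes essentially the same approach as the paper: the paper likewise ampliates by zero on an infinite-dimensional space $\clk$ (setting $X = 0 \oplus T_1$, $Y = 0 \oplus T_2$ on $\clk \oplus \clh$), checks that the enlarged pair of commuting pure partial isometries acquires a complete BCL triple because both defect complements become infinite-dimensional of equal dimension, invokes Theorems \ref{purecrit3} and \ref{DSSthm}, and then pulls the dilation back to $\clh$ through the inclusion map, exactly as in your final descent step. Your explicit choice of $\dim\clg$ and the cardinal arithmetic is, if anything, slightly more careful than the paper's assertion that both co-dimensions are ``equal to infinity.''
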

\begin{proof}
In view of the above theorem, let us assume that $\mbox{dim }\cld_{T_1^*} = \infty$ or $\mbox{dim }\cld_{T_2^*} = \infty$ and let $\clk$ be a infinite dimensional Hilbert space. Consider a pair of commuting pure partial isometries $(X,Y)$ in $\clb(\clk \oplus \clh)$ defined by,
\[
X = \begin{bmatrix} 0 & 0\\ 0 & T_1 \end{bmatrix}; \quad \quad 
Y = \begin{bmatrix} 0 & 0\\ 0 & T_2 \end{bmatrix}.
\]
From a simple calculation we can observe that
\[
D_{X^*} = \begin{bmatrix} I_{\clk} & 0\\ 0 & D_{{T_1}^*} \end{bmatrix}; \quad \quad D_{Y^*} = \begin{bmatrix} I_{\clk} & 0\\ 0 & D_{{T_2}^*} \end{bmatrix}.
\]
Hence, $\cld_{X^*} \oplus \cld_{Y^*} = \clk \oplus \cld_{{T_1}^*} \oplus \clk \oplus \cld_{{T_2}^*}$. Moreover, 
\[
D_{X^*}Y^* = \begin{bmatrix} 0 & 0\\ 0 & D_{{T_1}^*}{T_2}^* \end{bmatrix}; \quad \quad D_{Y^*}X^* = \begin{bmatrix} 0 & 0\\ 0 & D_{{T_2}^*}{T_1}^* \end{bmatrix}.
\]
Therefore,
\begin{align*}
&\{D_{X^*} Y^*(k,h) \oplus D_{Y^*} (k,h) : (k,h) \in \clk \oplus \clh \}\\
& = \{ (0, D_{{T_1}^*}{T_2}^* h, k, D_{{T_2}^*}h): (k,h) \in \clk \oplus \clh \}
\end{align*}
and similarly,
\begin{align*}
&\{D_{X^*} (k,h) \oplus D_{Y^*}X^* (k,h) : (k,h) \in \clk \oplus \clh \} \\
&= \{ (k, D_{{T_1}^*} h, 0, D_{{T_2}^*}{T_1}^*h): (k,h) \in \clk \oplus \clh \},
\end{align*}
showing that both the above subspaces have the same co-dimension (equal to infinity) inside \\$\cld_{X^*} \oplus \cld_{Y^*}$. Thus, if we start with an isometry 
\begin{align*}
\tilde{U} : &\{D_{X^*} Y^*(k,h) \oplus D_{Y^*} (k,h) : (k,h) \in \clk \oplus \clh \}\\
&\raro
\{D_{X^*} (k,h) \oplus D_{Y^*} X^* (k,h) : (k,h) \in \clk \oplus \clh\}
\end{align*}
defined by
\[
\tilde{U} \left(D_{X^*} Y^* (k,h), D_{Y^*} (k,h) \right) = \left( D_{X^*} (k,h), D_{Y^*}X^*(k,h) \right) \quad \quad ((k,h) \in \clk \oplus \clh),
\]
then it extends to a unitary (again denoted by $\tilde{U}$) on the full space $\cld_{X^*} \oplus \cld_{Y^*}$. In particular, we obtain that $(\cld_{X^*}\oplus \cld_{Y^*}, \tilde{U}, \tilde{P})$ is a complete BCL triple for $(X,Y)$ on $\clk \oplus \clh$. Now, by applying Theorem \ref{purecrit3} and Theorem \ref{DSSthm} we know that there exists an isometry $\tilde{\Pi}: \clk \oplus \clh \raro H^2(\cld_{X^*}\oplus \cld_{Y^*})$ such that 
\[
\tilde{\Pi} X^* = M_{\Phi}^* \tilde{\Pi} ; \quad \tilde{\Pi} Y^* = M_{\Psi}^* \tilde{\Pi}, 
\] 
where $\Phi(z) = (\tilde{P} + z\tilde{P}^{\bot})\tilde{U}^*$ and $\Psi(z) = \tilde{U}(\tilde{P}^{\bot}+z\tilde{P})$, and $(M_{\Phi},M_{\Psi})$ is a pair of commuting pure isometries on $H^2(\cld_{X^*}\oplus \cld_{Y^*})$. Now, let us consider the inclusion map $i : \clh \raro \clk \oplus \clh$ defined by $i(h) = (0,h)$ for $h \in \clh$. Define the isometry $\Pi: = \tilde{\Pi} i: \clh \raro H^2(\cld_{X^*}\oplus \cld_{Y^*})$, then for all $h \in \clh$ we can observe that
\[
\Pi T_1^* h = \tilde{\Pi} i(T_1^* h) = \tilde{\Pi} (0,T_1^* h) 
 = \tilde{\Pi} X^* i(h) = M_{\Phi}^* \tilde{\Pi} i(h) =  M_{\Phi}^* \Pi h.
\]
In a similar manner we obtain that
\[
\Pi T_2^* h = \tilde{\Pi} i(T_2^* h) = \tilde{\Pi} (0,T_2^* h) 
= \tilde{\Pi} Y^* i(h) = M_{\Psi}^* \tilde{\Pi} i(h) =  M_{\Psi}^* \Pi h.
\]
In particular, we show the existence of an isometry $\Pi:\clh \raro H^2(\cld_{X^*}\oplus \cld_{Y^*})$ such that
\[
\Pi T_1^* = M_{\Phi}^* \Pi ; \quad \Pi T_2^* = M_{\Psi}^* \Pi.
\]
This completes the proof.
\end{proof}

The method used in proving the above theorem shows that it is sufficient to consider the dilation problem for only pair of commuting contractions $(T_1,T_2)$ with complete BCL triples. However, in some cases there is a direct approach to this problem. Let us exhibit one such instance through the following result.

\begin{thm}\label{normpure}
Let $(T_1,T_2)$ be a pure pair of commuting contractions on $\clh$ such that \\ $\|D_{T_1^*}h\| = \|D_{T_2^*}h\|$ for all $h \in \clh$. Then $(T_1,T_2)$ dilates to a pair of commuting pure isometries. 
\end{thm}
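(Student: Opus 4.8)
The plan is to manufacture an \emph{off-diagonal} complete BCL triple for $(T_1,T_2)$. If one can produce a unitary of the form $U=\begin{bmatrix}0&W_1\\ W_2&0\end{bmatrix}$ on $\cle=\cld_{T_1^*}\oplus\cld_{T_2^*}$ satisfying the defining identity (\ref{U-h1}), then the two blocks $P^\bot UP^\bot|_{\mbox{ran }P^\bot}$ and $PU^*P|_{\mbox{ran }P}$ are both $0$, hence (trivially) pure contractions; Theorem \ref{compure3} then makes the associated $(M_\Phi,M_\Psi)$ a pure pair of commuting isometries, and Theorem \ref{DSSthm} delivers the dilation. So the entire task is to realize (\ref{U-h1}) in off-diagonal form.

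First I would unwind the hypothesis. Since $\|D_{T_1^*}h\|=\|D_{T_2^*}h\|$ for all $h$ is the same as $\langle D_{T_1^*}^2h,h\rangle=\langle D_{T_2^*}^2h,h\rangle$ and $D_{T_i^*}^2=I-T_iT_i^*$, the condition is equivalent to $T_1T_1^*=T_2T_2^*$. Taking positive square roots, the defect operators coincide, $D:=D_{T_1^*}=D_{T_2^*}$, and so do the defect spaces, $\mathcal{D}:=\cld_{T_1^*}=\cld_{T_2^*}$. Substituting $D_{T_1^*}=D_{T_2^*}=D$ into (\ref{norm1}), the two ``full'' terms $\|Dh\|^2$ cancel and I am left with $\|DT_2^*h\|=\|DT_1^*h\|$ for every $h$. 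Now I read off (\ref{U-h1}) for the off-diagonal ansatz: $U(DT_2^*h,Dh)=(W_1Dh,W_2DT_2^*h)$ must equal $(Dh,DT_1^*h)$. This forces $W_1Dh=Dh$, so $W_1=I$ on the dense subspace $D\clh$, a unitary; and it forces $W_2(DT_2^*h)=DT_1^*h$, which by the equality $\|DT_2^*h\|=\|DT_1^*h\|$ is a well-defined surjective isometry from $\clm_1:=\overline{DT_2^*\clh}$ onto $\clm_2:=\overline{DT_1^*\clh}$.

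The main obstacle is the last step: to make $U$ a genuine unitary, $W_2$ must be completed from the unitary $\clm_1\to\clm_2$ to a unitary of all of $\mathcal{D}$, which requires $\dim(\mathcal{D}\ominus\clm_1)=\dim(\mathcal{D}\ominus\clm_2)$. The norm hypothesis alone does not force this when the defect spaces are infinite dimensional (it controls $(T_iD)(T_iD)^*$ but not the kernels of $T_iD$). I would dispose of this exactly by the device of Theorem \ref{purecont2}: fix an infinite-dimensional Hilbert space $\clk$ and pass to the pure commuting pair $X=\begin{bmatrix}0&0\\0&T_1\end{bmatrix}$, $Y=\begin{bmatrix}0&0\\0&T_2\end{bmatrix}$ on $\clk\oplus\clh$. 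Here $D_{X^*}=I_{\clk}\oplus D_{T_1^*}$ and $D_{Y^*}=I_{\clk}\oplus D_{T_2^*}$, so the norm hypothesis is inherited by $(X,Y)$ and the construction above applies verbatim on $\cld_{X^*}=\cld_{Y^*}=\clk\oplus\mathcal{D}$, with $\clm_1,\clm_2$ replaced by $0\oplus\clm_1,\,0\oplus\clm_2$.

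Because the relevant complements now are $\clk\oplus(\mathcal{D}\ominus\clm_1)$ and $\clk\oplus(\mathcal{D}\ominus\clm_2)$, both are infinite dimensional and hence of equal dimension, so $W_2$ extends to a unitary and yields an off-diagonal complete BCL triple $(\cld_{X^*}\oplus\cld_{Y^*},\tilde U,\tilde P)$ for $(X,Y)$. By Theorem \ref{compure3} and Theorem \ref{DSSthm} there is then an isometry $\tilde\Pi:\clk\oplus\clh\raro H^2(\cld_{X^*}\oplus\cld_{Y^*})$ with $\tilde\Pi X^*=M_\Phi^*\tilde\Pi$ and $\tilde\Pi Y^*=M_\Psi^*\tilde\Pi$ for a pure pair $(M_\Phi,M_\Psi)$. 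Finally, composing with the inclusion $i:\clh\raro\clk\oplus\clh$, $i(h)=(0,h)$, and using $X^*i=iT_1^*$ and $Y^*i=iT_2^*$, the isometry $\Pi:=\tilde\Pi i$ satisfies $\Pi T_1^*=M_\Phi^*\Pi$ and $\Pi T_2^*=M_\Psi^*\Pi$, which is precisely the asserted dilation of $(T_1,T_2)$ to a pair of commuting pure isometries. (When $\dim\cld_{T_i^*}<\infty$ the dimension equality in the obstacle holds outright, so the off-diagonal unitary can be built directly on $\mathcal{D}\oplus\mathcal{D}$ without enlarging the space.)
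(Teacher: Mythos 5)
Your proposal is correct and takes essentially the same route as the paper's own proof: the paper likewise inflates to the pair $\left(\left[\begin{smallmatrix}0&0\\0&T_1\end{smallmatrix}\right],\left[\begin{smallmatrix}0&0\\0&T_2\end{smallmatrix}\right]\right)$ (on $\clh\oplus\clh$ rather than your abstract $\clk\oplus\clh$) precisely to make the two orthogonal complements infinite dimensional, extends the isometry $D_{A^*}B^*(h_1,h_2)\mapsto D_{B^*}A^*(h_1,h_2)$ to a unitary $U$, forms the off-diagonal unitary $W=\left[\begin{smallmatrix}0&I\\U&0\end{smallmatrix}\right]$ as a complete BCL triple, and concludes via Theorem \ref{compure3}, Theorem \ref{DSSthm} and the inclusion argument of Theorem \ref{purecont2}. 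The only cosmetic difference is that you extract $D_{T_1^*}=D_{T_2^*}$ directly from the hypothesis at the outset, whereas the paper deduces it mid-proof from (\ref{paircon1}).
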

\begin{proof}
From condition (\ref{norm1}) and the assumption we get that $\|D_{T_1^*}T_2^*h\| =  \|D_{T_2^*}T_1^*h\|$ for all $h \in \clh$. Now let us consider the following pair of commuting contractions in $\clb(\clh \oplus \clh)$,
\[
A := \begin{bmatrix}
0 & 0 \\ 0 & T_1
\end{bmatrix}; \quad B := \begin{bmatrix}
0 & 0 \\ 0 & T_2
\end{bmatrix}.
\]
From our assumption it immediately follows that
\[
\|D_{A^*}B^* (h_1,h_2)\| = \|D_{B^*}A^* (h_1,h_2)\| \quad \big((h_1,h_2) \in \clh \oplus \clh)\big .
\]
Therefore, we can define an isometry 
\[
V: \{D_{A^*}B^* (h_1,h_2): (h_1,h_2) \in \clh \oplus \clh\} \raro \{D_{B^*}A^* (h_1,h_2): (h_1,h_2) \in \clh \oplus \clh\},
\]
by
\[
V (D_{A^*}B^* (h_1,h_2)) = D_{B^*}A^* (h_1,h_2) \quad ((h_1,h_2) \in \clh \oplus \clh).
\] 

Now $\cld_{A^*} = \clh \oplus \cld_{T_1^*}$, implies that the orthogonal complement of the subspace \\ $\{D_{A^*}B^* (h_1,h_2): (h_1,h_2) \in \clh \oplus \clh\} = \{ (0,D_{T_1^*}T_2^* h_2): (h_1,h_2) \in \clh \oplus \clh\}$ inside $\cld_{A^*}$ is infinite dimensional. Similarly, the orthogonal complement of the subspace $\{D_{B^*}A^* (h_1,h_2): (h_1,h_2) \in \clh \oplus \clh\}$ inside $\cld_{B^*}$ is infinite dimensional. Thus, we can extend
$V$ to a unitary $U:\cld_{A^*} \raro \cld_{B^*}$ which satisfies
\[
U D_{A^*}B^* = D_{B^*}A^*.
\]
It easily follows that $BD_{A^*}^2B^* = AD_{B^*}^2A^*$ and therefore, from condition (\ref{paircon1}) we obtain that $D_{A^*} = D_{B^*}$, which further implies that $D_{T_1^*} = D_{T_2^*}$. Now, if we consider the following unitary 
\[
W:= \begin{bmatrix}
0 & I \\ U & 0
\end{bmatrix}: \cld_{A^*} \oplus \cld_{B^*} \raro \cld_{A^*} \oplus \cld_{B^*},
\]
then it immediately satisfies
\[
W \left(D_{A^*} B^* (h_1,h_2), D_{B^*} (h_1,h_2) \right) = \left( D_{A^*} (h_1,h_2), D_{B^*}A^*(h_1,h_2) \right) \quad ((h_1,h_2) \in \clh \oplus \clh).
\]
In particular, we prove that $(\cld_{A^*} \oplus \cld_{B^*}, W, P)$ is a complete BCL triple for the pure pair of commuting contractions $(A,B)$. From Theorem \ref{compure3} we obtain that the pair of isometries $(M_{\Phi},M_{\Psi})$ associated to the BCL triple $(\cld_A^* \oplus \cld_B^*, W, P)$ are pure isometries. Applying Theorem \ref{DSSthm} we get that $(A,B)$ dilates to $(M_{\Phi},M_{\Psi})$ on $H^2(\cld_A^* \oplus \cld_B^*)$. Now, following a method similar to the proof of Theorem \ref{purecont2} we obtain that $(T_1,T_2)$ dilates to $(M_{\Phi},M_{\Psi})$. This completes the proof.
\end{proof}

It is interesting to observe that we had only assumed $(T_1,T_2)$ to be a pure pair of commuting contractions satisfying the norm condition. In particular, we obtain the following result.

\begin{cor}
Let $(T_1,T_2)$ be a pair of commuting contractions such that $\|\cld_{T_1^*}h\| = \|\cld_{T_2^*}h\|$ for all $h \in \clh$. Then $(T_1,T_2)$ is a pair of pure contractions if and only if $T= T_1 T_2$ is a pure contraction.
\end{cor}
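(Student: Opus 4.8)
The plan is to establish the two implications separately. First I would note that the direction ``$T_1,T_2$ pure $\Raro$ $T$ pure'' is elementary and does not even require the norm hypothesis, whereas the reverse direction is where the norm condition does its work, through a direct appeal to Theorem \ref{normpure}.

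For the forward implication, suppose both $T_1$ and $T_2$ are pure contractions. Since $T_1$ and $T_2$ commute, so do $T_1^*$ and $T_2^*$, and hence $T^{*m} = (T_2^* T_1^*)^m = T_2^{*m} T_1^{*m}$ for every $m \in \Nat$. Then for any $h \in \clh$ one has $\|T^{*m} h\| = \|T_2^{*m} T_1^{*m} h\| \leq \|T_1^{*m} h\| \raro 0$ as $m \raro \infty$, using that $T_1$ is pure and that $T_2^{*m}$ is a contraction. Thus $T = T_1 T_2$ is pure. In fact this argument shows that purity of either single factor already forces $T$ to be pure.

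For the converse, I would assume $T = T_1 T_2$ is a pure contraction; then $(T_1,T_2)$ is a pure pair of commuting contractions in the sense of the definition, and together with the standing hypothesis $\|D_{T_1^*} h\| = \|D_{T_2^*} h\|$ for all $h \in \clh$ this is precisely the setting of Theorem \ref{normpure}. That theorem supplies a dilation of $(T_1,T_2)$ to a pair of commuting pure isometries $(M_\Phi, M_\Psi)$, i.e. an isometry $\Pi : \clh \raro \clk$ into the appropriate vector-valued Hardy space with $\Pi T_1^* = M_\Phi^* \Pi$ and $\Pi T_2^* = M_\Psi^* \Pi$. Iterating gives $\Pi T_1^{*m} = M_\Phi^{*m} \Pi$, and since $\Pi$ is isometric, $\|T_1^{*m} h\| = \|M_\Phi^{*m} \Pi h\| \raro 0$ because $M_\Phi$ is a pure isometry; hence $T_1$ is pure, and the identical argument with $M_\Psi$ gives that $T_2$ is pure.

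The only delicate step is this converse, but I do not expect a genuine obstacle: the entire substantive content is already packaged inside Theorem \ref{normpure}, and once the pure-isometric dilation is in hand, pulling purity back through the intertwining relation and the isometry $\Pi$ is routine.
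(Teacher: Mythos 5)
Your proposal is correct and takes essentially the same route as the paper: the corollary is stated there as an immediate consequence of Theorem \ref{normpure}, whose hypothesis requires only that the product $T_1T_2$ be pure, and whose conclusion (a dilation $\Pi T_i^* = M_\Phi^*\Pi$, $\Pi T_2^* = M_\Psi^*\Pi$ to commuting \emph{pure} isometries) yields purity of each $T_i$ by pulling back through the isometry $\Pi$, exactly as you argue. Your elementary forward implication $\|T^{*m}h\| = \|T_2^{*m}T_1^{*m}h\| \le \|T_1^{*m}h\| \raro 0$ is the same standard observation the paper takes for granted.
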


\section{Discussion on von Neumann Inequality}\label{sec-vnI}
An important consequence of any result on dilation theory is the corresponding von Neumann inequality. In this section, our aim is to investigate the von Neumann inequality associated to Theorem \ref{purecont1}. For this purpose, let us first begin by recalling the result for pure pairs of commuting contractions obtained by Das, Sarkar and the author in \cite{Fac}.
\begin{thm}\label{vn_in1}
Let $(T_1,T_2)$ be a pure pair of commuting contractions on $\Hil$
such that \\$\mbox{dim~}\cld_{T_i}<\infty$, $i=1,2$. Then there exists an algebraic variety $\mathcal{V}_{U,P}$ in $\overline{\D}^2$ such that
\[
\|p(T_1,T_2)\|\le \sup_{(z_1,z_2)\in  \mathcal{V}_{U,P}}|p(z_1,z_2)| \quad \quad (p
\in \mathbb{C}[z_1, z_2]).
\]
Moreover, if $m = \dim(\cld_{T_1} \oplus \cld_{T_2})$, then there
exists a pure pair of commuting isometries $(M_{\Phi}, M_{\Psi})$ on
$H^2_{\mathbb{C}^m}(\mathbb{D})$ such that
\[
\mathcal{V}_{U,P} = \{(z_1, z_2) \in \overline{\D}^2: \det(\Phi(z_1
z_2) - z_1 I) =0 \mbox{~and~} \det(\Psi(z_1 z_2)- z_2 I)=0\}.
\]
\end{thm}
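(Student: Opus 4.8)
The plan is to push the norm estimate from $(T_1,T_2)$ up to the BCL isometries by dilation, and then to collapse the operator norm of a polynomial in those isometries to a pointwise matricial norm on the torus, where a pair of commuting unitaries can be simultaneously diagonalized. First, since $\dim\cld_{T_i^*}<\infty$, Theorem \ref{purecont1} (via Theorem \ref{DSSthm}) supplies a complete BCL triple $(\cle,U,P)$ with $\cle=\cld_{T_1^*}\oplus\cld_{T_2^*}$, a pair of commuting pure isometries $(M_\Phi,M_\Psi)$ on $H^2(\cle)$ with $\Phi(z)=(P+zP^{\bot})U^*$ and $\Psi(z)=U(P^{\bot}+zP)$, and an isometry $\Pi_V:\clh\raro H^2(\cle)$ satisfying $\Pi_V T_1^*=M_\Phi^*\Pi_V$ and $\Pi_V T_2^*=M_\Psi^*\Pi_V$. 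Iterating these intertwining relations and using that both $(T_1,T_2)$ and $(M_\Phi,M_\Psi)$ are commuting pairs gives $T_1^jT_2^k\,\Pi_V^*=\Pi_V^*M_\Phi^jM_\Psi^k$ for all $j,k\in\Nat$, hence $p(T_1,T_2)=\Pi_V^*\,p(M_\Phi,M_\Psi)\,\Pi_V$ for every $p\in\C[z_1,z_2]$; as $\Pi_V$ is an isometry, this yields
\[
\|p(T_1,T_2)\|\le\|p(M_\Phi,M_\Psi)\|.
\]

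Next I would evaluate the right-hand side as a multiplier norm. Since $\dim\cle=m<\infty$ we identify $H^2(\cle)\cong H^2_{\C^m}(\D)$, and because $\Phi,\Psi$ are affine (degree one) in $z$ they are continuous on $\overline{\D}$; thus $p(M_\Phi,M_\Psi)=M_{p(\Phi,\Psi)}$ and, by the formula $\|M_G\|=\sup_{z\in\D}\|G(z)\|$ together with the maximum modulus principle,
\[
\|p(M_\Phi,M_\Psi)\|=\sup_{z\in\mathbb{T}}\|p(\Phi(z),\Psi(z))\|.
\]
A direct computation from $U^*U=I$ shows $\Phi(z)\Phi(z)^*=\Psi(z)\Psi(z)^*=I$ for $z\in\mathbb{T}$, so $\Phi(z),\Psi(z)$ are unitary on the torus; moreover $\Phi(z)\Psi(z)=\Psi(z)\Phi(z)=zI$ for all $z$.

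The crucial step is the pointwise reduction. Fix $z\in\mathbb{T}$. Then $\Phi(z),\Psi(z)$ are commuting unitary, hence normal, matrices, so they are simultaneously diagonalizable and $p(\Phi(z),\Psi(z))$ is normal with $\|p(\Phi(z),\Psi(z))\|=\max\{|p(\lambda,\mu)|:(\lambda,\mu)\text{ a joint eigenvalue of }(\Phi(z),\Psi(z))\}$. For such a joint eigenvalue with common eigenvector, the identity $\Phi(z)\Psi(z)=zI$ forces $\lambda\mu=z$, so $\Phi(\lambda\mu)=\Phi(z)$ and $\Psi(\lambda\mu)=\Psi(z)$; therefore $\det(\Phi(\lambda\mu)-\lambda I)=0$ and $\det(\Psi(\lambda\mu)-\mu I)=0$, i.e. $(\lambda,\mu)\in\mathcal{V}_{U,P}$. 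Hence $\|p(\Phi(z),\Psi(z))\|\le\sup_{\mathcal{V}_{U,P}}|p|$ for every $z\in\mathbb{T}$; taking the supremum over $\mathbb{T}$ and combining with the dilation estimate delivers the asserted inequality. Finally, because $\Phi(z_1z_2)=(P+z_1z_2P^{\bot})U^*$ and $\Psi(z_1z_2)=U(P^{\bot}+z_1z_2P)$ are polynomial in $(z_1,z_2)$, the two determinant conditions are genuine polynomial equations, so $\mathcal{V}_{U,P}$ is an algebraic variety in $\overline{\D}^2$.

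I expect the main obstacle to be exactly this pointwise reduction on the torus. One must verify that the boundary values $\Phi(z),\Psi(z)$ are honest commuting unitaries, so that normality makes the operator norm equal to the spectral radius (rather than only bounding it from below), and that every joint eigenvalue pair produced at a boundary point genuinely lands on $\mathcal{V}_{U,P}$. The relation $\Phi(z)\Psi(z)=zI$ is what pins each pair to $\lambda\mu=z$ and thereby ties it to the defining determinants of the variety; without it the joint spectrum would not be constrained to $\mathcal{V}_{U,P}$ and the sharp bound would fail.
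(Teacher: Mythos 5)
The paper itself contains no proof of Theorem \ref{vn_in1} --- it is recalled from \cite{Fac} --- but your argument is correct and is essentially the proof from that source (and the one implicit in the discussion of Section \ref{sec-vnI}): dilate to the BCL pair, identify $\|p(M_\Phi,M_\Psi)\|$ with the boundary supremum of $\|p(\Phi(z),\Psi(z))\|$, and on $\mathbb{T}$ use that $\Phi(z),\Psi(z)$ are commuting unitaries whose joint eigenvalues are forced onto $\mathcal{V}_{U,P}$ by the relation $\Phi(z)\Psi(z)=zI$.

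Two citation-level repairs are needed. First, Theorem \ref{purecont1} assumes each $T_i$ is pure, which is strictly stronger than the hypothesis here (only the product $T_1T_2$ is assumed pure), so it cannot be invoked; what you actually use --- and all you need --- is Theorem \ref{DSSthm} together with Remark \ref{rem1}, whose hypotheses are satisfied. Accordingly, you should not assert that $M_\Phi$ and $M_\Psi$ are individually pure: under the present hypotheses they need not be, nothing in your argument requires it, and the ``pure pair'' clause in the statement is automatic from $M_\Phi M_\Psi=M_z$. Second, you silently read the hypothesis as $\dim \cld_{T_i^*}<\infty$, whereas the statement literally says $\dim \cld_{T_i}<\infty$; the starred version is what the dilation construction requires, and it is how the paper itself reads the hypothesis in the paragraph following the theorem, but it is not implied by the unstarred one (a shift of infinite multiplicity is pure with $D_T=0$ and $\dim\cld_{T^*}=\infty$), so this intended reading of the statement should be made explicit.
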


Since the defect spaces are finite dimensional therefore, $(\cld_{T_1^*} \oplus \cld_{T_2^*},U,P)$ is a complete BCL triple for $(T_1,T_2)$. The pure pair of commuting isometries $(M_{\Phi}, M_{\Psi})$ on
$H^2(\mathbb{C}^m)$ appearing in the above theorem is associated to the BCL triple $(\cld_{T_1^*} \oplus \cld_{T_2^*},U,P)$. Now, if we assume $(T_1,T_2)$ to be a pair of pure contractions on $\clh$ then by Theorem \ref{purecont1} we obtain that $(M_{\Phi}, M_{\Psi})$ is a pair of pure isometries. Following this, from statement $(i)$ of Corollary \ref{pure_cor} we obtain that 
$(\Phi(0)=PU^*, \Psi(0)=UP^{\bot})$ is a pair of pure contractions on $\cle$. It now follows from Proposition \ref{transfer_cnu} that for all $z\in\D$,  $\Phi(z)$ and $\Psi(z)$ are matrix-valued c.n.u. contractions and therefore, do not have any unimodular eigenvalue. Thus,
\[
\mathcal{V}_{U,P} \cap \{ (\D\times \mathbb{T}) \cup (\mathbb{T}\times \D)
\}=\emptyset,
\]
and hence
\[
\mathcal{V}_{U,P} \cap \partial \D^2= \mathcal{V}_{U,P} \cap \mathbb{T}^2.
\]
Thus, we can replace the algebraic variety $\mathcal{V}_{U,P}$ in Theorem
~\ref{vn_in1} by an algebraic distinguished variety (see \cite{AM1})
\[
\tilde{\mathcal{V}}_{U,P} = \tilde{\mathcal{V}}_1\cap\tilde{V}_2,
\]
in $\D^2$, where
\[
\tilde{\mathcal{V}}_1= \{(z_1,z_2)\in\D^2:
\mbox{det}(\Phi(z_1z_2)-z_1I)=0\},
\]
and
\[
\tilde{\mathcal{V}}_2=\{(z_1,z_2)\in\D^2:
\mbox{det}(\Psi(z_1z_2)-z_2I)=0\}.
\]
From the above discussion we obtain the following characterization.
\begin{thm}\label{vn_in2}
Let $(T_1,T_2)$ be a pair of commuting pure contractions on $\Hil$
such that \\$\mbox{dim~}\cld_{T_i}<\infty$, $i=1,2$. Then there exists an algebraic distinguished variety $\tilde{\mathcal{V}}_{U,P}$ in $\D^2$ such that
\[
\|p(T_1,T_2)\|\le \sup_{(z_1,z_2)\in  \tilde{\mathcal{V}}_{U,P}}|p(z_1,z_2)| \quad \quad (p
\in \mathbb{C}[z_1, z_2]).
\]
\end{thm}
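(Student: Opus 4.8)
The plan is to obtain the statement as a refinement of Theorem~\ref{vn_in1}: since the hypotheses coincide, that theorem already delivers the bound $\|p(T_1,T_2)\| \le \sup_{\mathcal{V}_{U,P}}|p|$ over an algebraic variety $\mathcal{V}_{U,P}$ in $\overline{\D}^2$, described by the two determinantal equations $\det(\Phi(z_1z_2)-z_1I)=0$ and $\det(\Psi(z_1z_2)-z_2I)=0$ for the pair $(M_\Phi,M_\Psi)$ associated to the complete BCL triple $(\cld_{T_1^*}\oplus\cld_{T_2^*},U,P)$. The whole task therefore reduces to showing that $\mathcal{V}_{U,P}$ is a \emph{distinguished} variety, i.e.\ that it meets $\partial\D^2$ only inside $\mathbb{T}^2$; once this is known, the supremum over the closed variety coincides (by continuity of $p$) with the supremum over its trace $\tilde{\mathcal{V}}_{U,P}:=\mathcal{V}_{U,P}\cap\D^2=\tilde{\mathcal{V}}_1\cap\tilde{\mathcal{V}}_2$ in $\D^2$.

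First I would pin down the behaviour of the symbols on the open bidisc. Because $\dim\cld_{T_i^*}<\infty$, Theorem~\ref{purecont1} applies and $(M_\Phi,M_\Psi)$ is a pair of pure isometries. By part (i) of Corollary~\ref{pure_cor} the values $\Phi(0)=PU^*$ and $\Psi(0)=UP^{\bot}$ are then pure contractions on $\cle:=\cld_{T_1^*}\oplus\cld_{T_2^*}$, hence completely non co-isometric. Feeding this into the transfer-function representations of $\Phi$ and $\Psi$ and invoking Proposition~\ref{transfer_cnu} (as packaged in Corollary~\ref{symbols_cnu}) gives that $\Phi(z)$ and $\Psi(z)$ are c.n.u.\ contractions for every $z\in\D$. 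On a finite-dimensional space a c.n.u.\ contraction is a strict contraction, so $\|\Phi(z)\|,\|\Psi(z)\|<1$ for all $z\in\D$; in particular neither $\Phi(z)$ nor $\Psi(z)$ can have a unimodular eigenvalue.

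With this the boundary analysis is immediate. If a point $(z_1,z_2)\in\mathbb{T}\times\D$ lay in $\mathcal{V}_{U,P}$, then $z_1z_2\in\D$, yet the first determinantal equation would force the unimodular number $z_1$ to be an eigenvalue of the strict contraction $\Phi(z_1z_2)$, which is impossible; the symmetric argument with the second equation excludes $\D\times\mathbb{T}$. Hence $\mathcal{V}_{U,P}\cap\{(\D\times\mathbb{T})\cup(\mathbb{T}\times\D)\}=\emptyset$, so that $\mathcal{V}_{U,P}\cap\partial\D^2=\mathcal{V}_{U,P}\cap\mathbb{T}^2$ and $\mathcal{V}_{U,P}$ is distinguished. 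Replacing the supremum over $\mathcal{V}_{U,P}$ by the supremum over $\tilde{\mathcal{V}}_{U,P}$ then turns the estimate of Theorem~\ref{vn_in1} into the asserted inequality.

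The only substantive step is the second paragraph: establishing that the symbols remain c.n.u.\ (equivalently, strict) contractions throughout $\D$. This is precisely the mechanism by which the pureness of $(T_1,T_2)$ is converted into the absence of unimodular eigenvalues; everything afterward, including the reduction from an algebraic variety in $\overline{\D}^2$ to a distinguished variety in $\D^2$, follows from this eigenvalue obstruction and the maximum-modulus behaviour of polynomials.
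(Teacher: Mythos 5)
Your proposal reproduces the paper's own proof essentially step for step: start from Theorem \ref{vn_in1}; invoke Theorem \ref{purecont1} so that $(M_{\Phi},M_{\Psi})$ is a pair of pure isometries; deduce from Corollary \ref{pure_cor}(i) that $\Phi(0)$ and $\Psi(0)$ are pure (hence c.n.c.) contractions; apply Proposition \ref{transfer_cnu} through the transfer-function realizations (Corollary \ref{symbols_cnu}) to conclude that $\Phi(z)$ and $\Psi(z)$ are c.n.u.\ for every $z\in\D$; and then use the resulting eigenvalue obstruction to get $\mathcal{V}_{U,P}\cap\big((\D\times\mathbb{T})\cup(\mathbb{T}\times\D)\big)=\emptyset$, so that the variety of Theorem \ref{vn_in1} can be replaced by the distinguished variety $\tilde{\mathcal{V}}_{U,P}$.

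One claim needs correcting. It is not true that a c.n.u.\ contraction on a finite-dimensional space is a strict contraction: the nilpotent matrix $\left(\begin{smallmatrix}0&1\\0&0\end{smallmatrix}\right)$ is c.n.u.\ but has norm $1$. What is true --- and is all your boundary analysis actually uses --- is that a c.n.u.\ contraction has no unimodular eigenvalue: if $Tv=\lambda v$ with $|\lambda|=1$ and $\|T\|\le 1$, then $T^*v=\bar{\lambda}v$ as well, so $\mathbb{C}v$ is a reducing subspace on which $T$ is unitary. This is precisely how the paper phrases the step (``matrix-valued c.n.u.\ contractions and therefore do not have any unimodular eigenvalue''), and with this substitution your argument goes through verbatim. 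A smaller point, glossed over both by you and by the paper: replacing the supremum over $\mathcal{V}_{U,P}\subseteq\overline{\D}^2$ by the supremum over $\tilde{\mathcal{V}}_{U,P}\subseteq\D^2$ requires that the points of $\mathcal{V}_{U,P}$ lying on $\mathbb{T}^2$ belong to the closure of $\mathcal{V}_{U,P}\cap\D^2$; ``continuity of $p$'' alone does not give this. It can be justified using continuity of the eigenvalues of $\Phi(r z_1z_2)$ as $r\uparrow 1$ together with the pairing $\Psi(z)v=(z/\lambda)v$ whenever $\Phi(z)v=\lambda v$ (which follows from $\Psi(z)\Phi(z)=zI$), but since the paper is equally silent here, this is not a defect specific to your write-up.
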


There already exists a version of von Neumann inequality for a pair of commuting pure contractions on $\clh$ with finite dimensional defect spaces. This result was first discovered for pairs of commuting strictly contractive matrices by Agler and McCarthy in \cite{AM1}. The general theorem was proved by Das and Sarkar \cite{DS}, which we shall now state.
\begin{thm}\label{vn_in3}
Let $(T_1,T_2)$ be a pair of commuting pure contractions on $\Hil$
such that \\$\mbox{dim~}\cld_{T_1^*}<\infty$. Then there exists an distinguished variety $\mathcal{V}_{\tau_U}$ in $\D^2$ such that
\[
\|p(T_1,T_2)\|\le \sup_{(z_1,z_2)\in  \mathcal{V}_{\tau_U}}|p(z_1,z_2)| \quad \quad (p
\in \mathbb{C}[z_1, z_2]).
\]
\end{thm}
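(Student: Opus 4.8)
The plan is to transport the polynomial functional calculus of $(T_1,T_2)$ to an isometric dilation whose symbols are \emph{inner}, and then, on the distinguished torus, to replace operator norms by spectral radii read off from a determinant. Since each $T_i$ is a pure contraction, in particular $T_1$ is pure, so $T:=T_1T_2$ is pure because $\|T^{*m}h\|=\|T_2^{*m}T_1^{*m}h\|\le\|T_1^{*m}h\|\raro 0$. Hence the constructions of Section~\ref{sec-dilcont} apply and $(T_1,T_2)$ dilates, via an isometry $\Pi$, to a pure pair of commuting BCL isometries $(M_\Phi,M_\Psi)$ on some $H^2(\cle)$ with $M_\Phi M_\Psi=M_z$, equivalently $\Phi(z)\Psi(z)=\Psi(z)\Phi(z)=zI_\cle$ (Theorem~\ref{DSSthm}). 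The finiteness hypothesis $\dim\cld_{T_1^*}<\infty$ is used precisely to force $\cle$ finite-dimensional, so that the symbol $\Phi$, realized as a transfer function $\tau_U$ in the sense of Corollary~\ref{symbols_cnu}, is matrix-valued and the determinant below is meaningful.

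With the dilation in hand, for $p\in\C[z_1,z_2]$ the relations $\Pi T_1^*=M_\Phi^*\Pi$ and $\Pi T_2^*=M_\Psi^*\Pi$ give $\Pi\,p(T_1,T_2)^*=p(M_\Phi,M_\Psi)^*\Pi$, so $\|p(T_1,T_2)\|\le\|p(M_\Phi,M_\Psi)\|$. As $\Phi(z),\Psi(z)$ commute, $p(M_\Phi,M_\Psi)=M_{p(\Phi,\Psi)}$, and therefore $\|p(T_1,T_2)\|\le\sup_{z\in\D}\|p(\Phi(z),\Psi(z))\|$, which equals the essential supremum of $\|p(\Phi(\zeta),\Psi(\zeta))\|$ over $\zeta\in\mathbb T$. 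For such $\zeta$, identity (\ref{isometry}) makes the finite matrix $\tau_U(\zeta)=\Phi(\zeta)$ unitary, so $\Phi(\zeta)$ and $\Psi(\zeta)=\zeta\Phi(\zeta)^{-1}$ are commuting unitaries; being simultaneously diagonalizable, $p(\Phi(\zeta),\Psi(\zeta))$ is normal and its norm equals $\max|p(z_1,z_2)|$ over the common eigenvalue pairs, which obey $z_1\in\sigma(\Phi(\zeta))$ and $z_1z_2=\zeta$.

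I would then define
\[
\mathcal V_{\tau_U}=\{(z_1,z_2)\in\D^2:\det\big(\tau_U(z_1z_2)-z_1I_\cle\big)=0\},
\]
so that the eigenvalue pairs above are exactly $\overline{\mathcal V_{\tau_U}}\cap\mathbb T^2$ (the companion condition $z_2\in\sigma(\Psi(z_1z_2))$ is automatic from $\Phi\Psi=zI$). The two previous paragraphs then yield $\|p(T_1,T_2)\|\le\max_{\overline{\mathcal V_{\tau_U}}\cap\mathbb T^2}|p|=\sup_{\mathcal V_{\tau_U}}|p|$. That $\mathcal V_{\tau_U}$ is genuinely \emph{distinguished} follows as in the reduction preceding Theorem~\ref{vn_in2}: by Corollary~\ref{symbols_cnu} the matrices $\Phi(w),\Psi(w)$ are c.n.u. for every $w\in\D$ and so have no unimodular eigenvalue, whence $\mathcal V_{\tau_U}$ meets neither $\mathbb T\times\D$ nor $\D\times\mathbb T$ and exits the bidisc through $\mathbb T^2$.

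\textbf{Main obstacle.} The genuine difficulty is not the functional-calculus estimate but the construction underlying the first step: producing the dilation with \emph{inner}, matrix-valued symbols---equivalently the unitary $U$ whose transfer function $\tau_U$ is inner---using only the single finiteness assumption on $\cld_{T_1^*}$ rather than on both defect spaces as in Theorem~\ref{vn_in1}. This is the And\^o-dilation construction of Das and Sarkar \cite{DS}; once $\tau_U$ is available with unitary boundary values, the maximum-principle passage to $\mathbb T$ and the normality argument are routine, and one recovers the Agler--McCarthy inequality \cite{AM1}.
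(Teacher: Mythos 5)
Your functional-calculus and boundary arguments are fine as far as they go, but the proposal breaks at its first step, and that step \emph{is} the theorem. You invoke Theorem \ref{DSSthm} to produce the BCL dilation $(M_{\Phi},M_{\Psi})$ on $H^2(\cle)$ and assert that the hypothesis $\dim\cld_{T_1^*}<\infty$ ``is used precisely to force $\cle$ finite-dimensional.'' Both claims fail. Here $\cle=\cld_{T_1^*}\oplus\cld_{T_2^*}$, and Theorem \ref{vn_in3} imposes nothing on $\cld_{T_2^*}$, so $\cle$ may be infinite-dimensional; with that go your determinant $\det\bigl(\tau_U(z_1z_2)-z_1I_{\cle}\bigr)$, the rational innerness of $\Phi$, and the unitary boundary values. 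More fundamentally, Theorem \ref{DSSthm} (as well as Theorem \ref{purecrit} and Corollary \ref{symbols_cnu}, which you need for the c.n.u.\ property) requires $(\cld_{T_1^*}\oplus\cld_{T_2^*},U,P)$ to be a \emph{complete} BCL triple, i.e.\ equality of the codimensions of $\clm_U$ and $\cln_U$; by Remark \ref{rem1} this is guaranteed when \emph{both} defect spaces are finite-dimensional, and it can fail under the present hypothesis. Concretely, take $T_1=M_z$ and $T_2=0$ on $H^2$: a commuting pair of pure contractions with $\dim\cld_{T_1^*}=1$, for which $\clm_U=\{0\}\oplus H^2$ has codimension $1$ in $\cld_{T_1^*}\oplus\cld_{T_2^*}=\mathbb{C}\oplus H^2$ while $\cln_U$ is the whole space; hence no unitary extension $U$ exists and none of the Section \ref{sec-dilcont} machinery applies. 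Your closing paragraph concedes exactly this construction to \cite{DS}; since, as you note yourself, everything after it is routine, the proposal amounts to citing \cite{DS} for the theorem.

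Two points of comparison with the paper. First, the paper does not prove Theorem \ref{vn_in3} either: it quotes it from Das--Sarkar \cite{DS}, whose construction does not pass through a complete BCL triple on $\cld_{T_1^*}\oplus\cld_{T_2^*}$ (in effect, the And\^o isometry is extended to a unitary only after enlarging the second summand, and one works directly with the $\clb(\cld_{T_1^*})$-valued transfer function); the paper's own methods yield only Theorem \ref{vn_in2}, under the stronger hypothesis that both defect spaces are finite-dimensional, and recover Theorem \ref{vn_in3} in that restricted case via the identification of varieties in \cite{BKS}. Second, even granting a finite-dimensional complete BCL triple, your variety is not the paper's $\mathcal{V}_{\tau_U}=\{(z_1,z_2)\in\D^2:\det(\tau_U(z_1)-z_2I_{\cld_{T_1^*}})=0\}$ but rather the set $\tilde{\mathcal{V}}_1$ from the discussion preceding Theorem \ref{vn_in2}, and by itself it need not be distinguished: your ``automatic companion condition'' requires $z_1\neq0$, and since $\Phi(0)=PU^*$ is singular whenever $P\neq I_{\cle}$, the entire slice $\{0\}\times\D$ lies in your variety, whose closure therefore meets $\D\times\mathbb{T}$. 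This is precisely why the paper intersects with $\tilde{\mathcal{V}}_2=\{(z_1,z_2)\in\D^2:\det(\Psi(z_1z_2)-z_2I)=0\}$, using that both $\Phi(w)$ and $\Psi(w)$ are c.n.u., before claiming distinguishedness; and the equality of the resulting variety with $\mathcal{V}_{\tau_U}$ is the nontrivial theorem of \cite{BKS}, not a formality.
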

Specifically, $\tau_U(z)$ is the transfer function associated to the unitary $U$ appearing in Theorem \ref{DSSthm} and $ \mathcal{V}_{\tau_U} = \{(z_1, z_2) \in \D^2: \det(\tau_U(z_1) - z_2 I_{\cld_{T_1^*}}) =0 \}$.

A natural question that arises in this context is as follows: what is the relationship between the distinguished varieties $\tilde{\mathcal{V}}_{U,P}$ and $\mathcal{V}_{\tau_U}$? In a recent work (Theorem 4.2, \cite{BKS}) Bhattacharyya, Sau and Kumar have proved that $\tilde{\mathcal{V}}_{U,P}$ and $\mathcal{V}_{\tau_U}$ are indeed the same distinguished variety inside $\D^2$. Thus, we recover the von Neumann inequality obtained in \cite{DS} and also in \cite{AM1}.
\vspace{2mm}

\noindent\textbf{Acknowledgement:} The author would like to thank Prof. Gautam Bharali for going through the initial drafts of this article and providing valuable comments and suggestions. The author is supported by DST-INSPIRE Faculty Fellowship No. - DST/INSPIRE/04/2019/000769.


\begin{thebibliography}{10}

\bibitem{AM1}
J. Agler and J.E. McCarthy, \emph{Distinguished Varieties} Acta.
Math. 194 (2005), 133--153.

\bibitem{AM2}
J. Agler and J.E. McCarthy, {\em Pick Interpolation and Hilbert Function Spaces}, Grad.Stud. Math.,44. Amer. Math. Soc., Providence, RI, 2002.


\bibitem{An}
T. Ando, {\em On a pair of commutative contractions}, Acta Sci.
Math. (Szeged) 24 (1963), 88--90.


\bibitem{BS}
J.A. Ball and H. Sau, {\em Functional Models and Invariant Subspaces for Pairs of Commuting Contractions}, Operator Theory, Operator Algebras and Their Interactions with Geometry and Topology. Operator Theory: Advances and Applications, 278 (2020), 11--54, Birkhäuser.


\bibitem{BDHS}
S. Barik, B. K. Das, K. Haria and J. Sarkar, {\em Isometric dilations and von Neumann inequality for a class of tuples in the polydisc}, Transactions of the American Math Society, 372 (2019), 1429--1450.

\bibitem{BDF1}
H. Bercovici, R.G. Douglas and C. Foias, {\em Canonical models for
bi-isometries}, A panorama of modern operator theory and related
topics, 177--205, Oper. Theory Adv. Appl., 218, Birkhauser/Springer
Basel AG, Basel, 2012.


\bibitem{BDF2}
H. Bercovici, R.G. Douglas and C. Foias, {\em Bi-isometries and
commutant lifting}, Characteristic functions, scattering functions
and transfer functions, 51--76, Oper. Theory Adv. Appl., 197,
Birkhauser Verlag, Basel, 2010.


\bibitem{BDF3}
H. Bercovici, R.G. Douglas and C. Foias, {\em On the classification
of multi-isometries}, Acta Sci. Math. (Szeged) 72 (2006), 639--661.

\bibitem{BCL}
C. A. Berger, L. A. Coburn and A. Lebow, {\em Representation and
index theory for $C^*$-algebras generated by commuting isometries},
J. Funct. Anal. 27 (1978), no. 1, 51--99.


\bibitem{BKS}
T. Bhattacharyya, P. Kumar and H. Sau, {\em Distinguished Varieties Through the Berger--Coburn--Lebow Theorem}, to appear in Analysis and PDE, https://arxiv.org/pdf/2001.01410.pdf 

\bibitem{CIL}
Y. Chen, K.J. Izuchi and J.Y. Lee, {\em Sequences of powers of Toeplitz operators on the Hardy space}, Integral Equations Operator Theory 90 (2018), no. 6, Paper No. 70, 18 pp.


\bibitem{CV}
R.E. Curto and F.-H. Vasilescu, {\em Standard operator models in the polydisc}, Indiana Univ. Math. J. 42(1993), 791--810.


\bibitem{MB}
M. Bhattacharjee and B.K. Das, {\em Factors of Hypercontractions}, to appear in Journal of Operator Theory, https://arxiv.org/abs/1908.10790

\bibitem{DS}
B. K. Das and J. Sarkar, {\em Ando dilations, von Neumann
inequality, and distinguished varieties}, J. Funct.
Anal. 272 (2017), 2114--2131.

\bibitem{Fac}
B. K. Das, J. Sarkar and S. Sarkar, {\em Factorizations of Contractions}, Advances in Mathematics, 322 (2017), 186--200. 


\bibitem{Duggal}
B. P. Duggal, {\em On unitary parts of contractions}, Indian J. Pure Appl. Math., 25 (1994), 1243--1247.


\bibitem{Halmos}
P.R.  Halmos, {\em Shifts on Hilbert spaces}, J. Reine Angew. Math, 208 (1961), 102--112.

\bibitem{HM}
P.R.  Halmos and J. E McLaughlin, {\em Partial Isometries},  Pacific J. Math. 13 (1963), 585--596. 

\bibitem{Yang}
W. He, Y. Qin and R. Yang, {\em Numerical invariants for commuting
isometric pairs}, Indiana Univ. Math. J. 64 (2015), 1--19.

\bibitem{Gaspar}
D. Gaspar and P. Gaspar, {\em Wold decompositions and the unitary model for biisometries}, Integral Equations Operator Theory 49 (2004), 419--433.

\bibitem{DKVW}
A. Grinshpan, D.S. Kaliuzhnyi-Verbovetskyi, V.  Vinnikov and H.J. Woerdeman, {\em Classes of tuples ofcommuting contractions satisfying the multivariable von Neumann inequality}, J. Funct. Anal. 256 (2009), 3035--3054.

\bibitem{Levan}
N. Levan, {\em Canonical decompositions of completely nonunitary contractions}, J. Math. Anal. Appl. 101 (1984), no. 2, 514--526. 

\bibitem{MSS}
A. Maji, Sankar T.R. and J. Sarkar, {\em Pairs of commuting isometries - I},  Studia Mathematica, 248 (2019), 2, 171–-189. 



\bibitem{NF}
B. Sz.-Nagy, C. Foias, H. Bercovici, L. K\'erchy,
{\em Harmonic analysis of operators on Hilbert space}, 
Second edition. Revised and enlarged edition. Universitext. Springer, New York, 2010. xiv+474 pp. ISBN: 978-1-4419-6093-1 


\bibitem{Nagy}
B. Sz.-Nagy, {\em Sur les contractions de l'espace de Hilbert}, Acta
Sci. Math. (Szeged) 15 (1953), 87--92.

\bibitem{Neumann}
J. von Neumann, {\em Allgemeine Eigenwerttheorie Hermitescher Funktionaloperatoren}, (German) Math. Ann. 102 (1930), no. 1, 49--131.

\bibitem{JS}
J. Sarkar, {\em An Introduction to Hilbert module approach to
multivariable operator theory}, Operator Theory, (2015) 969--1033 ,
Springer.

\bibitem{Sau}
H. Sau, {\em And\^o dilations for a pair of commuting contractions: two explicit constructions and functional models}, arXiv:1710.11368.

\bibitem{Shimorin}
S. Shimorin, {\em Wold-type decompositions and wandering subspaces for operators close to isometries}, J. Reine Angew. Math. 531 (2001), 147--189. 

\bibitem{Wold}
H. Wold, {\em A study in the analysis of stationary time series}, Stockholm, 1954.

\bibitem{Y1}
R. Yang, {\em The core operator and congruent submodules}, J. Funct. Anal. 228 (2005), 469--489.

\bibitem{ZYL}
S. Zhu, Y. Yang and Y. Lu, {\em The reducibility of compressed shifts on Beurling type quotient modules over the bidisk}, J. Funct. Anal. 278 (2020), no. 1, 108304, 40 pp. 
\end{thebibliography}
\end{document}